\documentclass[11pt,a4paper,reqno]{amsart}
\usepackage[T1]{fontenc}
\usepackage[utf8]{inputenc}
\usepackage{lmodern,microtype}
\usepackage[margin=25mm]{geometry}
\usepackage{amssymb,mathtools,mathrsfs}
\usepackage{booktabs}
\usepackage{enumitem}
\usepackage{xcolor}
\definecolor{inkblue}{HTML}{243D50}
\usepackage[colorlinks=true,linkcolor=inkblue,citecolor=inkblue,
  urlcolor=inkblue,pagebackref=true]{hyperref}
\usepackage{bookmark}
\hypersetup{
  pdftitle={Harmonic Norms on Quiver Representations over Non-Archimedean Fields},
  pdfauthor={Oren Ben-Bassat},
  pdfsubject={Harmonic norms, complete non-Archimedean fields, and quiver representations},
  pdfkeywords={quiver representations, harmonic norms, non-Archimedean fields,
    polystability, semistable reduction, split norms, Bruhat-Tits buildings,
    convex potentials}
}
\renewcommand*{\backrefalt}[4]{%
  \ifcase #1\relax
  \or\space\textit{Cited on page~#2.}%
  \else\space\textit{Cited on pages~#2.}%
  \fi}
\setlist{itemsep=3pt,topsep=5pt}
\numberwithin{equation}{section}
\newtheorem{theorem}{Theorem}[section]
\newtheorem{proposition}[theorem]{Proposition}
\newtheorem{lemma}[theorem]{Lemma}
\newtheorem{corollary}[theorem]{Corollary}
\theoremstyle{definition}
\newtheorem{definition}[theorem]{Definition}
\theoremstyle{remark}
\newtheorem{remark}[theorem]{Remark}
\newcommand{\R}{\mathbb R}
\newcommand{\Ndiag}{\mathcal N^{\mathrm{diag}}}
\newcommand{\Nall}{\mathcal N}
\newcommand{\GF}{\mathsf{GF}}
\newcommand{\gr}{\operatorname{gr}}
\newcommand{\rk}{\operatorname{rk}}
\DeclareRobustCommand{\namedref}[2]{\hyperref[#2]{#1~\ref*{#2}}}
\makeatletter
\DeclareRobustCommand{\fulleqref}[1]{%
  \hyperref[#1]{\textup{\tagform@{\ref*{#1}}}}}
\DeclareRobustCommand{\namedeqref}[2]{%
  \hyperref[#2]{#1~\textup{\tagform@{\ref*{#2}}}}}
\makeatother
\newenvironment{sourcequote}[1]{\begin{samepage}\par\smallskip\noindent\textbf{#1}\begin{quote}}{\end{quote}\smallskip\end{samepage}}

\title[Harmonic norms over non-Archimedean fields]{Harmonic Norms on Quiver Representations over Non-Archimedean Fields}
\author{Oren Ben-Bassat}
\address{University of Haifa}
\email{ben-bassat@math.haifa.ac.il}
\date{\today}
\subjclass[2020]{Primary 16G20; Secondary 12J25, 46S10, 51E24}
\keywords{Quiver representations, harmonic norms, non-Archimedean fields,
  polystability, semistable reduction, split norms, Bruhat--Tits buildings,
  convex potentials}

\begin{document}

\begin{abstract}
Let $K$ be any complete non-Archimedean field, not necessarily spherically
complete. We prove that every polystable representation of a finite quiver
over $K$ admits a split harmonic norm in the sense of
Haiden--Katzarkov--Kontsevich--Pandit (HKKP), answering a question from
their preprint \emph{Towards Categorical K\"ahler Geometry}.
Moret-Bailly's closed-image theorem gives a uniform separation from
representations with destabilizing subobjects. A finite estimate on norm
weights then yields a linear coercivity bound for stable representations
and hence a minimum of the extended HKKP potential on the complete CAT(0)
space of all norms. A direct first-variation computation identifies split
minimizers with HKKP harmonic norms. Strict supporting hinges reduce split
attainment to semistable reduction for quivers with arbitrary real radius
labels. Small changes of the lifted splitting make each reversed extension
nonsplit; a finite slope--rank descent proves termination. Consequently,
every minimum in the completed norm space is attained by a split norm
with the same arrow singular profiles, without restrictions on the value
group or the quiver. Nonsplit minimizers illustrate split attainment in
one and two radius modifications, and a two-parameter Berkovich analytic family separates
polystability, semistability, and harmonicity of a chosen split norm.
\end{abstract}
\maketitle

\clearpage
\tableofcontents
\bigskip

\section{Introduction}\label{sec:introduction}

Let $K$ be a field with a \emph{non-Archimedean absolute value},
that is, a map $|\cdot|:K\to\R_{\geq0}$ satisfying
\[
 |x|=0\Longleftrightarrow x=0,\qquad |xy|=|x||y|,\qquad
 |x+y|\leq\max\{|x|,|y|\}.
\]
We assume that $K$ is complete for $|\cdot|$.  The trivial absolute value is
allowed, and the value group need not be discrete.

A \emph{closed ball} in $K$ is a subset of the form
$B(a,r):=\{x\in K\mid |x-a|\leq r\}$, where
$a\in K$ and $r\in\R_{\geq0}$.  The field $K$ is
\emph{spherically complete} if every nonempty family of closed balls that is
totally ordered by inclusion has nonempty intersection.  We do not assume
that $K$ is spherically complete.

If $|\cdot|$ is trivial, every closed ball is either a singleton or all of
$K$.  Thus $K$ is spherically complete, every finite-dimensional norm is
split, and HKKP's theorem already applies.  The new argument below concerns
nontrivially valued fields; we retain the trivial case in the statements for
uniformity.

A finite quiver $Q$ has vertex set $Q_0$, arrow set $Q_1$, and source and
target maps \[s,t:Q_1\to Q_0.\]  A representation $E$ assigns a
finite-dimensional $K$-space $E_i$ to each vertex and a map
$\rho(a):E_{s(a)}\to E_{t(a)}$ to each arrow.  For
$\theta=(\theta_i)_{i\in Q_0}\in\R^{Q_0}$ put
\[
 \theta(E)=\sum_{i\in Q_0}\theta_i\dim E_i.
\]
We call $\theta(E)$ the \emph{$\theta$-degree} of $E$, or simply its degree
when $\theta$ is fixed.
Our convention is that a nonzero representation $E$ is $\theta$-stable if
\begin{equation}\label{eq:stability}
 \theta(E)=0,\qquad \theta(U)>0
 \quad\text{for all subrepresentations }\{0\}\ne U\subsetneq E.
\end{equation}
We call $E$ $\theta$-semistable if $\theta(E)=0$ and the same condition
holds with weak inequalities.  It is $\theta$-polystable if it is a finite direct sum of
$\theta$-stable representations; the zero representation is allowed as the
empty direct sum.

A classical precedent for existence questions of this kind is the
Donaldson--Uhlenbeck--Yau theorem, a form of the Kobayashi--Hitchin
correspondence: a holomorphic vector bundle on a compact K\"ahler manifold is
polystable precisely when it admits a Hermitian--Yang--Mills metric
\cite{Donaldson,UY,Lubke}.  Its finite-dimensional counterpart is the Kempf--Ness
theorem: for a complex reductive group $G$ acting linearly on a finite-dimensional
complex vector space $V$, after choosing a Hermitian norm invariant under a
maximal compact subgroup of $G$, an orbit $G\cdot v\subseteq V$ is closed precisely
when the norm attains a minimum on $G\cdot v$ \cite{KempfNess}.  King applied this principle to
quiver representations over $\mathbb C$. For integral stability weights,
$\theta$-polystability is equivalent to the existence of Hermitian metrics on
the vertex spaces satisfying the quiver moment-map equation
\cite[Proposition~6.5]{King}.  \'Alvarez-C\'onsul and Garc{\'\i}a-Prada later
extended the stability--metric correspondence to twisted quiver bundles via
the quiver vortex equations \cite[Theorem~3.1]{AlvarezGarciaPrada}.

On the non-Archimedean side, Goldman and Iwahori introduced an intrinsic
metric on the space of $p$-adic norms, viewed as an analogue of the symmetric
space of positive-definite forms \cite{GoldmanIwahori}.  Bruhat and Tits
placed this example in their general theory of buildings for reductive groups
\cite{BruhatTits}.  For a finite-dimensional $K$-space $V$, the
diagonalizable norms form the extended Euclidean Bruhat--Tits building of
$\operatorname{GL}(V)$ \cite[Example~3.2]{BE}, while their homothety classes
form its reduced building, of affine type $\widetilde A_{\dim V-1}$ when
$\dim V\geq2$
\cite[Remark~1.23 and Theorem~1.25]{RTW}.  Our building terminology and the
precise norm models are specified in \namedref{Subsection}{subsec:normspaces}.
Maculan later developed a non-Archimedean Kempf--Ness theory in Berkovich GIT
\cite[Chapter~3]{Maculan}; the norm-minimization problem studied here is a
quiver analogue of that variational theory.  In the quiver setting, Haiden, Katzarkov,
Kontsevich, and Pandit define harmonic norms through graded reduction and a
convex potential on the space of norms.  Spherical completeness makes every
finite-dimensional norm split \cite[Proposition~6.9]{HKKP}; vertexwise this
gives $\Ndiag(E)=\Nall(E)$, so the product extended norm building is then a
complete CAT(0) space \cite[Corollary~3.3]{BE}.  Their minimization proof uses
this hypothesis.  Accordingly, $K$ is assumed to be spherically complete
throughout HKKP's Section~6.3 \cite{HKKP}; they also use the standing
convention that quivers are finite.  Making the field hypothesis
explicit, their result reads as follows.
\begin{sourcequote}{Adapted from Haiden--Katzarkov--Kontsevich--Pandit,
\cite[Theorem~6.23, p.~76]{HKKP}.}
\textbf{Theorem 6.23.} Let $K$ be a spherically complete non-Archimedean
field, $Q$ a quiver, $\theta=(\theta_i)_{i\in Q_0}\in\R^{Q_0}$,
$f=(f_a)_{a\in Q_1}$ a family of weak ramp functions, and
$E=((E_i)_i,\rho)$ a representation of $Q$ over $K$. Then the following
implications hold:
\[
 \begin{aligned}
 E\text{ is $\theta$-polystable}
 &\Longrightarrow E\text{ admits a harmonic norm for }(\theta,f)\\
 &\Longrightarrow E\text{ is $\theta$-semistable}.
 \end{aligned}
\]
\end{sourcequote}
They ask immediately afterward:
\begin{sourcequote}{Haiden--Katzarkov--Kontsevich--Pandit,
\cite[Question~6.24, p.~76]{HKKP}.}
\phantomsection\label{quote:hkkpquestion624}
\textbf{Question 6.24.} Can the spherical completeness assumption on $K$ be
omitted in Theorem~6.23?
\end{sourcequote}
\namedref{Theorem}{thm:mainintro} answers this question affirmatively.

\begin{theorem}\label{thm:mainintro}
Fix a finite quiver $Q$, a complete non-Archimedean field $K$, a real
stability parameter $\theta\in\R^{Q_0}$, a family
$f=(f_a)_{a\in Q_1}$ of weak ramps, and a finite-dimensional representation
$E$ of $Q$ over $K$.
Choose nonzero determinant vectors in the spaces $\det E_i$, and let $S_E$
be the resulting extended HKKP potential.  Then
\[
 \begin{aligned}
 E\text{ is $\theta$-polystable}
 &\Longrightarrow E\text{ admits a split HKKP harmonic norm for }(\theta,f)\\
 &\Longrightarrow E\text{ is $\theta$-semistable}.
 \end{aligned}
\]
Moreover, if $E$ is $\theta$-polystable, $S_E$ admits a global minimizer
on $\Nall(E)$, the complete space of all norms on $E$.
Furthermore, if $\theta(E)=0$ and $h\in\Nall(E)$ globally minimizes
$S_E$, there is a split global minimizer $k\in\Ndiag(E)$ with
the same arrow singular profiles (the ordered lists of logarithmic singular
values of the arrow maps, as defined in~\fulleqref{eq:extendedsingularvalues})
and the same potential value as $h$.
\end{theorem}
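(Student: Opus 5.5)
The plan follows the outline in the abstract, assembling results the paper develops later. The argument has four stages: reduce to the stable case, prove existence of a minimizer on $\Nall(E)$, identify split minimizers with harmonic norms, and replace an arbitrary minimizer by a split one.

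\textbf{Reduction to the stable case.} A polystable $E=\bigoplus_j E^{(j)}$ is a direct sum of stable summands. Orthogonal direct sums of norms split the potential additively, so $S_E(\bigoplus h_j)=\sum_j S_{E^{(j)}}(h_j)$ up to the fixed determinant normalisation. It therefore suffices to produce, for each stable summand, a split minimizer that is harmonic, and then take the orthogonal sum. Here one must check that a sum of HKKP harmonic norms on stable summands of equal (zero) $\theta$-degree is again harmonic; this should follow from the graded-reduction definition being compatible with orthogonal sums.

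\textbf{Existence of a minimizer for stable $E$.} This is the analytic core. First, Moret-Bailly's closed-image theorem gives a uniform positive distance, in the relevant analytic topology on Grassmannians of subrepresentation dimension vectors, between $E$ and the locus of representations having a subspace that is a subrepresentation and violates stability. Second, I would prove a finite estimate on norm weights. Given a norm $h$ and a geodesic ray in $\Nall(E)$ from a base point, the slope of $S_E$ along the ray is bounded below by a positive constant times the length of the ray. Concretely, one decomposes the ray's filtration into jumps. The arrow terms $f_a$ contribute at least the weighted failure of the filtration steps to be subrepresentations; the $\theta$-term contributes $\sum_k \lambda_k\,\theta(U_k)$, which is positive whenever the steps are genuine subrepresentations, by stability. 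The separation constant controls the case where the steps are only close to subrepresentations. This yields linear coercivity $S_E(h)\ge c\,d(h_0,h)-C$. Third, the potential is convex and lower semicontinuous on the complete CAT(0) space $\Nall(E)$, and its sublevel sets are bounded. Bounded closed convex sets in a complete CAT(0) space are weakly compact, in the sense of $\Delta$-convergence. Hence a minimizer $h$ exists. This gives the ``Moreover'' clause.

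\textbf{Split minimizers are harmonic, and harmonic implies semistable.} A direct first-variation computation identifies split minimizers with harmonic norms. In an adapted basis for a split minimizer $k$, one differentiates $S_E$ along rays of diagonal norms and along rays tilting the splitting. The vanishing of the one-sided derivatives in every direction is exactly HKKP's harmonic condition on the graded reduction. The second implication, harmonic $\Rightarrow$ semistable, is the standard argument: given a destabilizing subrepresentation $U$, the ray that scales $U$ has strictly negative slope at a harmonic norm. That contradicts the fact that the first variation at a harmonic norm is nonnegative in every direction, since the potential is convex. This argument does not use spherical completeness.

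\textbf{Split attainment (the final clause).} Given an arbitrary global minimizer $h$ with $\theta(E)=0$, take the arrow singular profiles of $h$, that is, the radius data. Encode $(E,h)$ by strict supporting hinges as a representation of a quiver with real radius labels. Then a minimizer becomes a semistable object of this labelled quiver, and split attainment reduces to semistable reduction for it. One lifts a splitting of the graded reduction. If the lifted splitting is not compatible with the arrows, small perturbations of the lift make each reversed extension nonsplit. Passing to the associated graded with respect to the resulting filtration then decreases a lexicographic slope--rank invariant, which takes finitely many values, so the descent terminates at a split norm $k$. Throughout, the profiles and the value $S_E(k)=S_E(h)$ are preserved, so $k$ is a split global minimizer. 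Applied to stable $E$, this also finishes the first implication.

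I expect this last stage to be the main obstacle. The difficulty is ensuring that the perturbation keeps the norm a minimizer while making the extensions nonsplit, and that the invariant strictly decreases.
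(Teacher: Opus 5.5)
Your first three stages follow the paper's route in outline: Moret-Bailly separation, a gap estimate on sorted weights, the direct method in the CAT(0) completion, first variation at split norms, and a ray shrinking a destabilizing $U$. The final stage, however, has a genuine gap, and that stage carries both the split-attainment clause and the first implication.

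You assert that the descent preserves the arrow profiles and the value of $S_E$ ``throughout'', and you expect the difficulty to be keeping each modified norm a minimizer. Neither can work:
\begin{itemize}
\item The given minimizer $h$ is typically nonsplit. It therefore has no dimension-preserving graded reduction from which a descent could start (\namedref{Remark}{rem:coreinterpretation}).
\item The modified norms are not minimizers. In \namedref{Subsection}{subsec:d4nonsplit} the first modification can even raise $S_E$.
\end{itemize}
In the paper the descent runs on the auxiliary hinge quiver $\widehat Q$ and starts from an arbitrary split $b$-bounded norm. The minimizer $h$ enters only through the radius labels $d_{a,\nu}$ and three ingredients your sketch lacks:
\begin{enumerate}
\item The determinant identity $\ell_V(p\vee e^{-c}T^*q)-\ell_V(p)=\sum_j(\sigma_j(T;p,q)-c)_+$. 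It turns the hinge potential $G_E$ into the linear determinant functional $J$, with $J(p)\geq G_E(p^0)\geq G_E(h)$ on the whole feasible locus. This lower bound is also what makes $\widehat E$ $\kappa$-semistable, without which the descent cannot run.
\item A separate minimality statement. A split feasible $p$ with $\kappa$-semistable radius-labelled reduction minimizes $J$. In a common apartment this is $J(q)-J(p)=-\int_{\R}\kappa(F_{\leq u})\,du\geq0$, where the $F_{\leq u}$ are graded subrepresentations. It must then be extended to nonsplit competitors such as $\widehat h$. The paper uses a maximally complete immediate extension; here density of split norms plus continuity of $G_E$ would also do.
\item Strictness of the supporting hinges. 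This forces every global minimizer of $G_E$ to share the profiles and the $S_E$-value of $h$.
\end{enumerate}
The profiles are recovered only at this last step, applied to the original-vertex part of the terminal norm.

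Nonsplitting of the reversed extensions is also not produced by arbitrary small perturbations: a nonzero residual block can be a coboundary (\namedref{Remark}{rem:nonsplitblock}). It comes from a specific shear, namely the fixed point $c$ of a contraction normalizing the residual to $Pz=0$. Here $P$ is a contractive left inverse of the coboundary $D$, and $D$ is isometric because $\operatorname{Hom}_{\mathcal A_b}(A_0,T_\delta B_0)=0$ by slope.

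There are also smaller errors in the existence stage. Since $S_E(ch)=S_E(h)$ when $\theta(E)=0$, the bound $S_E(h)\geq c\,d(h_0,h)-C$ is false on $\Nall(E)$, and its sublevel sets are unbounded. Coercivity holds only modulo simultaneous homothety, so the direct method must run on a closed convex slice such as the determinant-normalized one. The linear bound also genuinely needs $f_a'\to\infty$. Separation only bounds the weight gap at a nonpositive-degree cut by $H+B$, where $H$ is the largest positive top singular value. The resulting loss $A(H+B)$ is absorbed only because $\min_a\bigl(f_a(H)-f_a(-\infty)\bigr)-AH$ is bounded below. Finally, harmonicity is nonnegativity, not vanishing, of the one-sided derivatives.
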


The space $\Nall(E)$ is the metric completion of the
product extended norm building $\Ndiag(E)$.  These spaces are specified in
\namedref{Definition}{def:euclideanbuilding},
\namedref{Definition}{def:normsapartments}, and
\namedref{Definition}{def:metrizations}. The HKKP potential $S_E$ is
defined in \fulleqref{eq:HKKPpotential} and extended in
\namedref{Lemma}{lem:allnormprofile}. Existence of a minimizer in the
complete space of norms is proved in
\namedref{Theorem}{thm:universalvariational}.  The profile-preserving
split-attainment assertion and the two displayed implications are proved in
\namedref{Theorem}{thm:splitattainment}, using the split-minimizer criterion
\namedref{Corollary}{cor:splitcriterionallfields}.

The proof has two parts: existence of a minimizer of $S_E$ within the
completed space $\Nall(E)$, and attainment of the same minimum by a split
norm in $\Ndiag(E)$. These questions coincide over a spherically complete
field. For existence, consider the proper incidence morphisms that
parametrize representations with subrepresentations of prescribed dimension
vectors. Moret-Bailly's closed-image theorem \cite[Theorem~1.3]{MB}
separates a stable representation uniformly from the images corresponding
to nonpositive-degree subobjects. In coordinates orthonormal for a fixed
lattice norm, this gives a lower bound for a forbidden arrow coefficient
at every nonpositive-degree cut. Summing the gaps between the norm weights
then gives constants $c>0$ and $C_0\in\R$ such that
\[
 S_E(h)\geq c\,\bar d_\infty([h],[h^0])-C_0.
\]
Thus the sublevel sets modulo simultaneous homothety are bounded. The
direct method on the complete CAT(0) space gives a global minimizer
\cite[Lemma~2.2.19, p.~42]{Bacak}; direct sums treat the polystable case.
The split-minimizer criterion follows separately from a first-variation
calculation over $K$, using the graded singular filtrations.

For the split-attainment part, we seek $k\in\Ndiag(E)$ in the split
locus of \namedref{Definition}{def:metrizations} with $S_E(k)=S_E(h)$.
\namedref{Lemma}{lem:supportinghinges} defines $G_E$ so that $h$ minimizes it
and every global minimizer $k$ satisfies
\fulleqref{eq:hingeprofileequality} and $S_E(k)=S_E(h)$.  Thus it suffices to
find a split global minimizer of $G_E$. The max-envelope construction of \namedref{Definition}{def:maxenvelope}
and the determinant identity of
\namedref{Lemma}{lem:hingedeterminant}
convert this to minimizing an auxiliary determinant functional $J$. This functional is
an instance of \namedref{Definition}{def:labelleddetfunctional}, over the
$b$-bounded norms of
\namedref{Definition}{def:radiuslabelledreduction}.  In
\namedref{Lemma}{lem:radiuslangton}, each nonterminal modification
$Y\mapsto Y'$ strictly decreases, in lexicographic order, the pair consisting
of the maximal slope and the total rank of the maximal-slope subobject.
This subobject is characterized in
\namedref{Lemma}{lem:maximalslope}. The finitely many possible pairs force
termination at a split $b$-bounded norm $p$ with semistable graded reduction.
\namedref{Lemma}{lem:labelleddetminimum} then proves that $p$ minimizes $J$
on the full $b$-bounded locus, and
\fulleqref{eq:auxiliaryelimination} gives the required split minimizer of
$G_E$.  This is \namedref{Theorem}{thm:splitattainment}.

We give an example of a Berkovich analytic family of representations of $D_4$  over
the Novikov field in
\namedref{Subsection}{subsec:d4analyticfamily}, based on \cite[Example~6.22]{HKKP}. It
explicitly separates the locus of polystable fibers, the locus where the split norm is harmonic, and the locus of semistable fibers.
The semistable locus is also the locus where a harmonic norm exists; it
is strictly larger than the locus where the chosen norm is harmonic. In particular, a harmonic
norm can globally minimize the potential on an indecomposable, nonpolystable
representation, as HKKP already observed.

\namedref{Subsection}{subsec:nonsplitattainment} exhibits an example of a nonsplit minimizer over
the Novikov field and follows the supporting-hinge construction through one
radius modification to a split minimizer with the same singular profile.
Finally, \namedref{Subsection}{subsec:d4nonsplit} describes the entire
minimum locus of a $D_4$ representation in terms of norms on its
two-dimensional multiplicity space, and gives a split-attainment
calculation requiring two successive radius modifications.
\bigskip

\noindent\textbf{Acknowledgments.} This article was written through extensive conversations with OpenAI's ChatGPT-6.0 Sol, beginning with the author's original arguments, which contained substantial gaps. The ideas and development of the article represent roughly equal contributions of the author and the AI.
OpenAI's ChatGPT, Codex, and Prism were used to discuss and revise the
arguments and to proofread and typeset the manuscript.  The examples were chosen by the author.

\section{Norms, singular filtrations, and graded flags}\label{sec:prelim}

\subsection{Norm spaces and buildings}\label{subsec:normspaces}

Write
\[
 \begin{gathered}
 K^\circ=\{a\in K\mid |a|\leq1\},\qquad
 K^{\circ\circ}=\{a\in K\mid |a|<1\},\\
 \widetilde K=K^\circ/K^{\circ\circ},\qquad
 \Gamma_K=-\ln|K^\times|\subseteq\R.
 \end{gathered}
\]
Here $K^\circ$ is the valuation ring, $K^{\circ\circ}$ its maximal ideal,
$\widetilde K$ its residue field, and $\Gamma_K$ the additive value group.

\begin{definition}\label{def:euclideanbuilding}
Let $(A,W)$ be model-apartment data as in
\cite[Definition~1.9]{RTW}: $A$ is a Euclidean affine space with underlying
vector space $A_{\mathrm{vect}}$, the vectorial part
$W_0\leq O(A_{\mathrm{vect}})$ of $W$ is a finite reflection group, and
there exist a special point $x\in A$ and a possibly nondiscrete translation
subgroup $T$ such that $W=W_xT$, where
$W_x=\{w\in W\mid w(x)=x\}$ is the stabilizer of $x$.
A \emph{Euclidean building} $X$ modeled on $(A,W)$ is a space
with an apartment atlas satisfying axioms \textup{(EB1)}--\textup{(EB5)} of
that definition.
In particular, charts are stable under precomposition by $W$, any two points
lie in a common apartment, chart changes on apartment intersections are
restrictions of elements of $W$, and any two Weyl sectors contain subsectors
in a common apartment.  For every apartment $A'$ and every $y\in A'$, axiom
\textup{(EB5)} also supplies a distance-nonincreasing retraction
$r_{y,A'}:X\to A'$ such that
\[
 r_{y,A'}|_{A'}=\operatorname{id}_{A'},\qquad
 r_{y,A'}^{-1}(y)=\{y\}.
\]
The induced geodesic metric makes the chart images, called
\emph{apartments}, isometric copies of $A$. Thus a building is
assembled by gluing copies of its model apartment, which provide its local
coordinates.  In this paper, an \emph{extended Euclidean building} means a metric product
$X_{\mathrm{ess}}\times\R^m$ on whose Euclidean factor $W_0$ acts trivially;
$X_{\mathrm{ess}}$ is its \emph{essential}, or \emph{reduced}, building;
compare \cite[Remark~1.23]{RTW}.
\end{definition}

\begin{definition}\label{def:normsapartments}
Let $V$ be a finite-dimensional $K$-vector space. A
\emph{non-Archimedean norm} on $V$ is a function
$h:V\to\R_{\geq0}$ such that $h(v)=0$ if and only if $v=0$ and, for
all $a\in K$ and $v,w\in V$,
\[
 h(av)=|a|h(v),\qquad h(v+w)\leq\max\{h(v),h(w)\}.
\]
A basis
$\mathbf e=(e_1,\ldots,e_n)$ is \emph{$h$-orthogonal} if
\[
 h\!\left(\sum_{j=1}^n a_je_j\right)
   =\max_{1\leq j\leq n}|a_j|h(e_j)
 \qquad\text{for all }a_1,\ldots,a_n\in K.
\]
An $h$-orthogonal basis $\mathbf e$ is \emph{$h$-orthonormal} if $h(e_j)=1$ for every
$j$.  The norm $h$ is \emph{split}, or \emph{diagonalizable}, if it
admits an orthogonal basis.  We write $\Nall(V)$ for all norms on $V$ and
$\Ndiag(V)\subseteq\Nall(V)$ for the split norms.  For a fixed basis
$\mathbf e$, its \emph{apartment} is
\[
 \mathcal A_{\mathbf e}
 :=\{h_{\mathbf e,\mathbf w}\mid \mathbf w\in\R^n\}
 \subseteq\Ndiag(V),
\]
where
\[
 h_{\mathbf e,\mathbf w}\!\left(\sum_{j=1}^n a_je_j\right)
 :=\max_{1\leq j\leq n}e^{w_j}|a_j|.
\]
\end{definition}

Thus $w_j=\ln h_{\mathbf e,\mathbf w}(e_j)$ and
$\Ndiag(V)=\bigcup_{\mathbf e}\mathcal A_{\mathbf e}$.  The terminology agrees
with \namedref{Definition}{def:euclideanbuilding}: after equipping
$\Ndiag(V)$ with the metric $d_2$ below, each $\mathcal A_{\mathbf e}$ is an
isometric copy of the model apartment $\R^n$, and these copies cover the
building.

  The Goldman--Iwahori distance on
$\Nall(V)$ is
\[
d_\infty(h,k)=\sup_{v\ne0}|\ln k(v)-\ln h(v)|.
\]
By finite-dimensional norm equivalence \cite[Proposition~1.6]{BE}, that supremum is finite when $V\ne\{0\}$.
The space $\Nall(V)$ is complete for $d_\infty$, and $\Ndiag(V)$ is
dense in it \cite[Proposition~1.8 and Theorem~1.19(ii)]{BE}.

Any two split norms lie in a common apartment
\cite[Proposition~1.14]{BE}.  If
$h=h_{\mathbf e,\mathbf u}$ and $k=h_{\mathbf e,\mathbf w}$ lie in
$\mathcal A_{\mathbf e}$, put
\[
 \lambda_j=w_j-u_j=\ln\frac{k(e_j)}{h(e_j)},
 \qquad 1\leq j\leq n.
\]
Then $d_\infty(h,k)=\max_j|\lambda_j|$.  The common-apartment expression
\[
 d_2(h,k)=\left(\sum_{j=1}^n|\lambda_j|^2\right)^{1/2}
\]
is apartment-independent: up to order, the $\lambda_j$ are the intrinsically
defined relative spectrum of $(h,k)$
\cite[Definition~2.22 and Proposition~2.24]{BE}.  By
\cite[Theorem~3.1]{BE}, it defines a metric $d_2$ on $\Ndiag(V)$.
Thus, for $n=\dim V$,
\begin{equation}\label{eq:d2dinftycomparison}
 d_\infty(h,k)\leq d_2(h,k)\leq\sqrt n\,d_\infty(h,k).
\end{equation}
\begin{remark}

When $V=\{0\}$, the empty basis is orthogonal by convention,
$\Nall(V)=\Ndiag(V)$ consists of the unique norm $h_\varnothing(0)=0$, and
the preceding coordinate formula is interpreted as this unique norm. We set
$d_\infty=d_2=0$, identify $\det V$ with $K$, choose determinant vector $1$
of norm one, and interpret empty suprema, metric sums, and maxima as zero.
\end{remark}
By \cite[Example~3.2]{BE}, the charts
$\mathbf w\mapsto h_{\mathbf e,\mathbf w}$ make $(\Ndiag(V),d_2)$ the
\emph{extended norm building} of $\operatorname{GL}(V)$.  The coordinate
description above shows that its model apartment is $\R^n$ and that its
linear Weyl group $S_n$ permutes the coordinates.  The affine apartment transformations
induced by monomial changes of basis form
$\Gamma_K^n\rtimes S_n$, the extended affine Weyl group.   For $c>0$ and
$h\in\Nall(V)$, write $ch$ for the norm $(ch)(v)=c\,h(v)$.  Two norms
$h,k$ are \emph{homothetic} if $k=ch$ for some $c>0$, and we denote the
homothety class of $h$ by $[h]$.  Scaling by $c$ translates every apartment
by $(\ln c)(1,\ldots,1)$.  Hence, when $n>0$, the homothety quotient
$\Ndiag(V)/\R_{>0}$ is the reduced building of $\operatorname{GL}(V)$,
equivalently the Bruhat--Tits building of $\operatorname{PGL}(V)$ or
$\operatorname{SL}(V)$.  It is a point for $n=1$ and has affine type
$\widetilde A_{n-1}$ for $n\geq2$
\cite[Remark~1.23 and Theorem~1.25]{RTW}.  Its apartments are
$\R^n/\R(1,\ldots,1)$, canonically isometric by orthogonal projection to
$\{\mathbf x\in\R^n\mid\sum_jx_j=0\}$.  The induced extended affine Weyl
group is $(\Gamma_K^n/\Gamma_K(1,\ldots,1))\rtimes S_n$.  Its affine Weyl
subgroup generated by affine-root (wall) reflections has translation part equal to the
image of
\[
 \{(\gamma_1,\ldots,\gamma_n)\in\Gamma_K^n
   \mid \textstyle\sum_j\gamma_j=0\}.
\]
The group $\operatorname{GL}(V)$ acts by
$(g\cdot h)(v)=h(g^{-1}v)$ and carries $\mathcal A_{\mathbf e}$ to
$\mathcal A_{g\mathbf e}$; its action on homothety classes factors through
$\operatorname{PGL}(V)$.  Thus ``building'' without the adjective
``reduced'' refers below to the unquotiented, extended norm building.

For $p\in\{2,\infty\}$, the quotient metric on homothety classes is
\[
 \bar d_p([h],[k])=\inf_{c>0}d_p(h,ck).
\]
If $n>0$ and $h,k$ lie in a common apartment and their logarithmic coordinate
differences are $\lambda_1,\ldots,\lambda_n$, then
\[
 \bar d_\infty([h],[k])=
 \frac12\left(\max_j\lambda_j-\min_j\lambda_j\right).
\]

The building metric $d_2$ extends uniquely to a complete CAT(0) metric on
$\Nall(V)$ \cite[Corollary~3.3]{BE}. The CAT(0) condition means that
distances within a geodesic triangle are no greater than the corresponding
distances in its Euclidean comparison triangle; in particular, geodesics
are unique. Since $\Ndiag(V)$ is dense for
both metrics and~\fulleqref{eq:d2dinftycomparison} is uniform, the same
inequalities hold on the completion:
\[
 d_\infty(h,k)\leq d_2(h,k)\leq\sqrt n\,d_\infty(h,k),
 \qquad h,k\in\Nall(V).
\]
In general $\Nall(V)$ is not the norm building just described: a nonsplit
norm lies in none of its basis apartments.  Rather, it is the complete
CAT(0) metric completion of the extended building $\Ndiag(V)$.  If every
finite-dimensional norm over $K$
is split---in particular, if $K$ is spherically complete---these two spaces
coincide and the building itself is complete.

\begin{definition}\label{def:canonicalgeodesics}
For $h,k\in\Nall(V)$, the \emph{canonical geodesic} from $h$ to $k$ is
the unique constant-speed $d_2$-geodesic
$[h,k]:[0,1]\to\Nall(V)$.  If $h$ and $k$ are split, choose a common
apartment and write
$h=h_{\mathbf e,\mathbf u}$ and $k=h_{\mathbf e,\mathbf w}$.  Then
\[
 [h,k]_t=h_{\mathbf e,(1-t)\mathbf u+t\mathbf w},
 \qquad 0\leq t\leq1.
\]
This formula is independent of the chosen common apartment by uniqueness.
\end{definition}

The distance $d_\infty$ is used for approximation and quantitative
estimates.  When $\dim V>0$,
the determinant norm $h_{\det}$ is the quotient norm on $\det V$ induced by
the projective ultrametric tensor norm; an explicit formula is recalled in
\fulleqref{eq:exteriornorm}.

For a complete valued extension $L/K$, the scalar extension of
$h\in\Nall(V)$ is the map $h_L:V\otimes_KL\to\R_{\geq0}$ defined by
\[
 h_L(w)=\inf_{\substack{w=\sum_j a_jv_j\\a_j\in L,\ v_j\in V}}
          \max_j |a_j|h(v_j),
 \qquad w\in V\otimes_KL.
\]
It restricts to $h$, the map $h\mapsto h_L$ is an isometric embedding for
$d_\infty$, and every orthogonal basis of a split norm remains orthogonal
with the same weights
\cite[Definition~1.24 and Proposition~1.25(i), (ii), and (iv)]{BE}.

A linear map $g:(V,h)\to(W,k)$ is \emph{non-expansive} if
$k(gv)\leq h(v)$ for every $v\in V$, equivalently if its operator norm is at
most one.

\subsection{Subspace, quotient, and determinant norms}

For a finite-dimensional $K$-vector space $V$ and a $K$-linear subspace
$W\subseteq V$, restriction is the map
$\Nall(V)\to\Nall(W)$, $h\mapsto h|_W$. For a fixed nonzero
determinant vector $\omega_W\in\det W$, define
$\ell_W:\Nall(V)\to\R$ by
\[
 \ell_W(h)=\ln ((h|_W)_{\det}(\omega_W)).
\]
Changing $\omega_W$ changes every functional below in which $\ell_W$ occurs
by an additive constant.  In particular, it changes $S_E$, $G_E$, and the
determinant functional $J$ below only by additive constants; their minimizers
and their sublevel sets, after translating the levels, are unchanged.  For a
line with chosen vector $v$, this is simply $\ln h(v)$.
For $h\in\Nall(V)$, the induced quotient norm
$h/W\in\Nall(V/W)$ is defined by
\[
 (h/W)(v+W):=\inf_{w\in W}h(v+w).
\]
This is a genuine norm: finite-dimensional norm equivalence gives a split
norm $q$ and $C\geq1$ with $C^{-1}q\leq h\leq Cq$, and taking infima
gives $C^{-1}(q/W)\leq h/W\leq C(q/W)$. The split quotient norm $q/W$
is positive on nonzero cosets by \namedref{Lemma}{lem:adapted} below.

\begin{lemma}\label{lem:adapted}
A subspace of a finite-dimensional split normed space over $K$ is split
\cite[Lemma~1.13]{BE}. The corresponding quotient, with its quotient norm, is
also split. Every prescribed flag of subspaces admits an adapted
orthogonal basis. A basis is \emph{adapted} to a flag if every member of
the flag is spanned by a subset of that basis.
\end{lemma}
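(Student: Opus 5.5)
The first assertion is cited from \cite[Lemma~1.13]{BE}, so the plan covers the last two. The tool is the greedy orthogonalization argument available for split norms, as opposed to the general best-approximation argument that would need spherical completeness.

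\emph{Step 1: a subspace $W$ admits an orthogonal basis extendable to $V$.} Fix an orthogonal basis $\mathbf e$ of $(V,h)$. After reordering, Gaussian elimination on a basis of $W$ written in $\mathbf e$-coordinates produces a basis $w_1,\ldots,w_m$ of $W$ with $w_j=e_{i_j}+\sum_{i\notin\{i_1,\ldots,i_m\}}c_{ij}e_i$. This basis need not be orthogonal, so instead I would construct it inductively by choosing pivots greedily. Choose $w\in W$ and a coordinate index $i$ maximizing $|a_i(w)|h(e_i)/h(w)$, normalized so that this ratio equals $1$. Such a vector exists because the ratio takes values in the finite set of pivot candidates after normalization by reduced row echelon form over the finitely many index patterns. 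A cleaner route is to reduce to a reduction-mod-$\mathfrak m$ argument when $h$ is rescaled, but a non-discrete value group makes this awkward. I therefore prefer the following standard argument: the set of orthogonal bases of $V$ is acted on transitively enough that we may replace $e_{i}$ by $w$ while preserving orthogonality. The check is that if $h(w)=|a_i|h(e_i)$, then $(e_1,\ldots,w,\ldots,e_n)$ is again orthogonal, since the change of basis is unipotent and non-expansive in both directions. Iterating this on $W$ modulo the span already built gives an orthogonal basis of $V$ whose first $m$ vectors span $W$. This exchange step is the main obstacle: one must show that a vector $w\in W\setminus\{0\}$ attaining $h(w)=\max_i|a_i(w)|h(e_i)$ at an index not yet used always exists. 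Every $w$ satisfies this for its maximizing index, so the issue is only to keep the used indices disjoint. I would handle this by working modulo the span of the vectors already chosen, using the orthogonal basis of that span's complement.

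\emph{Step 2: the quotient is split.} Given an orthogonal basis $e_1,\ldots,e_n$ of $V$ with $W=\langle e_1,\ldots,e_m\rangle$, orthogonality gives
\[
 (h/W)\Bigl(\sum_{j>m}a_je_j+W\Bigr)=\inf_{b}\max\Bigl(\max_{j\le m}|b_j|h(e_j),\max_{j>m}|a_j|h(e_j)\Bigr)=\max_{j>m}|a_j|h(e_j).
\]
Hence the images of $e_{m+1},\ldots,e_n$ form an orthogonal basis of $V/W$. This also shows that $h/W$ is positive on nonzero cosets.

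\emph{Step 3: flags.} For a flag $W_1\subset\cdots\subset W_r\subseteq V$, induct on $r$. Apply Step 1 to $W_r\subseteq V$ to get an orthogonal basis of $V$ adapted to $W_r$. The restriction $h|_{W_r}$ is split, so the induction hypothesis gives an orthogonal basis of $W_r$ adapted to the shorter flag $W_1\subset\cdots\subset W_{r-1}$. Replace the first block of the basis of $V$ by this basis. Orthogonality of the combined basis follows because
\[
 h(x+y)=\max(h(x),h(y)) \quad\text{for } x\in W_r,\ y\in\langle e_{m+1},\ldots,e_n\rangle,
\]
which is exactly the orthogonality of the original basis across the two blocks.
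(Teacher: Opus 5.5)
Your proposal is correct and is essentially the paper's argument. The exchange step is the same: you replace $e_p$ by $w$ at an index with $h(w)=|a_p(w)|h(e_p)$ and check that the coordinate change is non-expansive in both directions. Your way of keeping pivot indices disjoint, taking the next vector in $W\cap V'$ with $V'$ spanned by the unused basis vectors, is exactly the paper's splitting $F=Kv\mathbin{\widehat\oplus}(F\cap V')$. The only difference is organizational and equivalent: the paper inducts on $\dim V$ starting from the smallest flag member, while you treat one subspace and then induct on flag length inside the top member.

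Drop the tentative ``finite set of pivot candidates'' justification at the start of Step~1, because orthogonality already gives every nonzero $w$ an index attaining $h(w)=\max_i|a_i(w)|h(e_i)$. Also, the coordinate change is unipotent only after rescaling $w$ by $a_p(w)^{-1}$.
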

\begin{proof}
Fix an orthogonal basis $(b_1,\ldots,b_n)$ of $V$. If
$v=\sum_jc_jb_j\ne0$, choose $p$ such that
$h(v)=|c_p|h(b_p)$. The basis obtained by replacing $b_p$ with $v$ is
orthogonal. Indeed, equip the replacement vector $v$ with its actual weight
$h(v)=|c_p|h(b_p)$. Both the resulting change-of-coordinate map between the
weighted max-normed coordinate spaces and its inverse are non-expansive because
$|c_j|h(b_j)\leq |c_p|h(b_p)$ for every $j$. Hence
\[
 V=Kv\mathbin{\widehat\oplus}V',
 \qquad V'=\operatorname{span}\{b_j\mid j\ne p\},
\]
is an orthogonal direct sum, meaning
$h(av+v')=\max\{|a|h(v),h(v')\}$ for $a\in K$ and $v'\in V'$.

Discard repetitions from the prescribed flag and argue by induction on
$\dim V$. Choose $v$ in its smallest nonzero member. Every later member
$F$ contains $v$, and the displayed decomposition gives
\[
 F=Kv\mathbin{\widehat\oplus}(F\cap V').
\]
The intersections with $V'$ form a flag in the lower-dimensional split
space $V'$. Applying the induction hypothesis there and adjoining $v$
produces an orthogonal basis adapted to the original flag. The assertion
for one subspace is the special case of the flag $\{0\}\subseteq W\subseteq V$.
For this flag, the images in $V/W$ of the complementary adapted basis
vectors form an orthogonal basis for the quotient norm.
\end{proof}

\begin{definition}\label{def:metrizations}
For a representation $E=(E_i,\rho(a))$ of $Q$, a \emph{norm on $E$} is a family
$h=(h_i)_{i\in Q_0}$ with $h_i\in\Nall(E_i)$ for every vertex $i$.  It is
\emph{split} if every $h_i$ is split, and it is a \emph{lattice
norm} if every $h_i$ admits an orthonormal basis.  Two norms
$h=(h_i)_i$ and $k=(k_i)_i$ are \emph{simultaneously homothetic} if there
exists a single positive real number $c$ such that
$k_i(v)=c\,h_i(v)$ for every $i\in Q_0$ and every $v\in E_i$; again
we denote the simultaneous-homothety class of $h$ by $[h]$.  We write
\[
\Nall(E)=\prod_{i\in Q_0}\Nall(E_i),\qquad
\Ndiag(E)=\prod_{i\in Q_0}\Ndiag(E_i),
\]
for the spaces of all norms and split norms, respectively.
The subspace $\Ndiag(E)\subseteq\Nall(E)$ is called the \emph{split
locus}.  We write $\overline{\Ndiag}(E)$ and $\overline{\Nall}(E)$ for the
respective quotients by simultaneous homothety and call
$\overline{\Ndiag}(E)$ the \emph{quotient split locus}.
\end{definition}

With the Euclidean product metric
\[
 d_2(h,k)^2=\sum_{i\in Q_0}d_2(h_i,k_i)^2,
\]
$\Ndiag(E)=\prod_i\Ndiag(E_i)$ is the \emph{product extended norm
building}, namely the extended building of
$\mathbf G_E=\prod_{i\in I}\operatorname{GL}(E_i)$, where
$I=\{i\in Q_0\mid\dim E_i>0\}$.  The group $\mathbf G_E$ acts vertexwise, and the
apartments are the products $\prod_{i\in I}\mathcal A_{\mathbf e_i}$.
This building depends only on the vertex spaces of $E$, not on its arrow
maps.  Its canonical geodesics are taken vertexwise with the same parameter.
If $I=\varnothing$, all four norm spaces and quotients just defined are
points.  Suppose $I\ne\varnothing$.  The quotient
$\overline{\Ndiag}(E)$ removes only the one common scaling line.  Its
apartments are
\[
 \left(\prod_{i\in I}\R^{\dim E_i}\right)
 \big/\R\bigl((1,\ldots,1)_i\bigr)_{i\in I},
\]
and it is the extended building of
$\bigl(\prod_{i\in I}\operatorname{GL}(E_i)\bigr)/\mathbb G_m^{\mathrm{diag}}$.
In particular, it is not the product of the vertexwise reduced buildings:
it retains the $|I|-1$ relative vertex-scaling directions.

\begin{proposition}\label{prop:productcompletion}
The metric space $(\Nall(E),d_2)$ is complete and CAT(0), and it is the metric
completion of the product extended norm building $(\Ndiag(E),d_2)$.
\end{proposition}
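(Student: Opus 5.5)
The plan is to reduce everything to the vertexwise statement \cite[Corollary~3.3]{BE} and then use the fact that finite products of complete CAT(0) spaces with the $\ell^2$ product metric are again complete and CAT(0). If $I=\varnothing$ there is nothing to prove, since all spaces are points. Vertices $i\notin I$ contribute one-point factors, so we may assume $Q_0=I$ is finite and nonempty. For each $i$, the metric $d_2$ on $\Nall(E_i)$ is the unique extension of the building metric on $\Ndiag(E_i)$. It is complete and CAT(0), and $\Ndiag(E_i)$ is dense in it, as recalled in \namedref{Subsection}{subsec:normspaces}.

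\emph{Completeness and density.} A sequence $h^{(m)}=(h^{(m)}_i)_i$ is Cauchy for the product metric if and only if each coordinate sequence is Cauchy, because
\[
 d_2(h_i,k_i)\leq d_2(h,k)\leq\sum_i d_2(h_i,k_i).
\]
By the same inequalities, coordinatewise limits are product limits. Hence $(\Nall(E),d_2)$ is complete. Given $h\in\Nall(E)$ and $\varepsilon>0$, choose split $k_i$ with $d_2(h_i,k_i)<\varepsilon/\sqrt{|I|}$. Then $k=(k_i)_i\in\Ndiag(E)$ satisfies $d_2(h,k)<\varepsilon$. So $\Ndiag(E)$ is dense in a complete space, and the inclusion is an isometry by definition of the product metric. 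By uniqueness of completions, $\Nall(E)$ is the metric completion of $(\Ndiag(E),d_2)$.

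\emph{CAT(0).} First, the product is geodesic: if $\gamma_i$ are constant-speed geodesics from $h_i$ to $k_i$ on $[0,1]$, then $\gamma(t)=(\gamma_i(t))_i$ satisfies
\[
 d_2(\gamma(s),\gamma(t))^2=\sum_i(s-t)^2d_2(h_i,k_i)^2=(s-t)^2d_2(h,k)^2.
\]
For the CAT(0) inequality I will use the Bruhat--Tits CN inequality, which characterizes CAT(0) among geodesic spaces. Let $m$ be the midpoint of $h$ and $k$, and let $p$ be any point. In each factor, $m_i$ is the midpoint of $h_i,k_i$, so
\[
 d_2(p_i,m_i)^2\leq\tfrac12d_2(p_i,h_i)^2+\tfrac12d_2(p_i,k_i)^2-\tfrac14d_2(h_i,k_i)^2 .
\]
Summing over $i$ gives the same inequality in the product. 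It remains to check that every midpoint in the product is the coordinatewise one. This also follows from summing: for any midpoint $m'$, the quantity $\sum_i\bigl(d_2(h_i,m'_i)^2+d_2(m'_i,k_i)^2\bigr)$ equals $\tfrac12 d_2(h,k)^2$. Each summand is at least $\tfrac12d_2(h_i,k_i)^2$, so every summand is an equality, which forces $m'_i$ to be the unique midpoint in each factor. A complete metric space with midpoints satisfying CN is CAT(0), for instance \cite{Bacak}.

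I expect no real obstacle here. The only point needing care is that the CAT(0) criterion is applied to the product metric rather than to $d_\infty$. Alternatively, one could simply cite the standard fact that $\ell^2$ products of CAT(0) spaces are CAT(0), as in Bridson--Haefliger.
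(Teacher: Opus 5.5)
Your proof is correct and takes the same route as the paper, which simply applies \cite[Corollary~3.3]{BE} at each vertex and passes to the finite Euclidean product. Your completeness, density, and CN-inequality arguments, including the check that every product midpoint is the coordinatewise one, fill in the standard facts about $\ell^2$ products of complete CAT(0) spaces that the paper leaves implicit.
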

\begin{proof}
Apply \cite[Corollary~3.3]{BE} vertexwise and take a finite Euclidean product.
\end{proof}

For $h=(h_i)_{i\in Q_0}$ and $k=(k_i)_{i\in Q_0}$ in $\Nall(E)$, set
\[
 d_\infty(h,k):=\max_{i\in I}d_\infty(h_i,k_i).
\]
The empty maximum is zero. For $p\in\{2,\infty\}$, the product quotient
metric is
\[
 \bar d_p([h],[k])=\inf_{c>0}d_p(h,ck).
\]
For $p=2$, the determinant-normalized slice in
\namedref{Lemma}{lem:quotientcompletion} meets every common scaling orbit
once and is isometric to the quotient for $\bar d_2$.  That lemma also
shows that $\overline{\Nall}(E)$ is the $\bar d_2$-metric completion of
$\overline{\Ndiag}(E)$; the uniform comparison
\fulleqref{eq:d2dinftycomparison} gives the same completion statement for
$\bar d_\infty$.

\subsection{Weak ramps and the harmonicity test}

In the following restatement of \cite[Definition~6.17]{HKKP}, a weak ramp
is a continuous function $f:[-\infty,\infty)\to\R$ whose restriction to
$\R$ is $C^1$ and has strictly increasing derivative satisfying
\[
 f'(\lambda)>0,\qquad \lim_{\lambda\to-\infty}f'(\lambda)=0,
 \qquad \lim_{\lambda\to\infty}f'(\lambda)=\infty.
\]
In particular $f':\R\to(0,\infty)$ has a continuous inverse
$(f')^{-1}:(0,\infty)\to\R$.

Fix real vertex weights $\theta=(\theta_i)_{i\in Q_0}$ and one weak ramp
$f_a$ for each arrow $a\in Q_1$.  These data determine the degree
homomorphism $\theta^\star$ below and hence the notion of harmonicity for a
normed representation of $Q$.  Thus harmonicity is always relative
to $\theta$ and $(f_a)_{a\in Q_1}$.

The following definitions of the associated graded spaces and singular
filtrations used in the harmonicity test restate
\cite[Section~6.2 and Definition~6.14]{HKKP}. Choose
once and for all a set $A\subseteq\R$ of representatives of
$\R/\Gamma_K$. For a split norm $h$ and $\alpha\in A$, put
\[
 V_\alpha^h=\{h\leq e^\alpha\}/\{h<e^\alpha\},\qquad
 \gr_hV=\bigoplus_{\alpha\in A}V_\alpha^h.
\]
Each $V_\alpha^h$ is a finite-dimensional $\widetilde K$-vector space. If
$A$ is replaced by another set $A'$ and $\alpha'\in A'$ represents the class
of $\alpha$, choose $c_\alpha\in K^\times$ with
$\alpha'=\alpha+\ln|c_\alpha|$. Multiplication by $c_\alpha$ identifies the
corresponding summands; another choice changes this identification by a
nonzero residue scalar. The same multiplication rescales both sides of the
inequalities below, so it identifies the entire filtered graded objects.
Thus their dimensions, their $\theta^\star$-degrees, and the semistability
conditions below are independent of $A$ up to graded isomorphism.
This representative-indexed description is used here for filtered
subspaces. For arrow maps of arbitrary real degree, we will instead retain
all real degrees and the scalar action, as in
\namedref{Definition}{def:realgradedmodules}; no identifications of distinct
radii will be implicit in those maps.

Our filtration conventions are as follows. An increasing real filtration is
right-continuous and a decreasing real filtration is left-continuous:
\[
 \mathcal F_{\leq\lambda}
 =\bigcap_{\mu>\lambda}\mathcal F_{\leq\mu},\qquad
 \mathcal F_{<\lambda}
 =\bigcup_{\mu<\lambda}\mathcal F_{\leq\mu},
\]
\[
 \mathcal F_{\geq\lambda}
 =\bigcap_{\mu<\lambda}\mathcal F_{\geq\mu},\qquad
 \mathcal F_{>\lambda}
 =\bigcup_{\mu>\lambda}\mathcal F_{\geq\mu}.
\]
Compatibility with an $\R/\Gamma_K$-grading means that every filtration step
is the direct sum of its intersections with the homogeneous summands. We do
not impose exhaustion or separation: the singular filtrations can have a
persistent kernel or cokernel.

For $T:(V,h_V)\to(W,h_W)$ with split norms, define
\begin{align*}
(\mathcal F^-(T)_{\leq\beta})_\alpha
 &=\{\bar v\in V_\alpha^{h_V}\mid\exists v\text{ representing }\bar v,
                       \ h_W(Tv)\leq e^{\alpha+\beta}\},\\
(\mathcal F^+(T)_{\geq\beta})_\alpha
 &=\{\bar w\in W_\alpha^{h_W}\mid\exists v,\ Tv\text{ represents }\bar w,
                       \ h_V(v)\leq e^{\alpha-\beta}\}.
\end{align*}
The two singular filtrations have the same finite jumps, with the same
graded multiplicities. Listed in decreasing order, these are
$\sigma_1\geq\cdots\geq\sigma_r$, where $r=\rk T$. We call them the
\emph{logarithmic singular values} of $T$; below, \emph{singular values}
means these logarithmic values. The weighted monomial form in
\namedref{Lemma}{lem:monomialcertificate} proves this assertion,
including the stated multiplicities. The persistent kernel of
$\mathcal F^-(T)$ contributes no
finite jump, and the quotient of $\gr_{h_W}W$ by
$\bigcup_{\beta\in\R}\mathcal F^+(T)_{\geq\beta}$ contributes no finite
jump. If $T=0$, including a zero source or target, then
\[
 \mathcal F^-(T)_{\leq\beta}=\gr_{h_V}V,\qquad
 \mathcal F^+(T)_{\geq\beta}=\{0\}
\]
for every $\beta$, and its finite-jump and singular-value lists are empty.

The category of Banach spaces with non-expansive morphisms was
studied by the author and Kremnizer
\cite[Appendix~A.4, especially Definition~A.36 and
pp.~111--112]{BenBassatKremnizer}. Following
\cite[Definitions~6.18--6.20]{HKKP}, let $\mathcal C(Q)$ be the quasi-abelian
category of normed representations and vertexwise non-expansive
morphisms, and let
$\mathcal C^{\mathrm{sp}}(Q)\subseteq\mathcal C(Q)$ be its full subcategory of
split normed representations. Let $\mathcal C_\star(Q)$ be the
quasi-abelian category whose objects consist of finite-dimensional
$\R/\Gamma_K$-graded $\widetilde K$-spaces $V_i$, an increasing
$\R$-filtration $\mathcal F^{a^-}$ on $V_{s(a)}$, and a decreasing
$\R$-filtration $\mathcal F^{a^+}$ on $V_{t(a)}$ for each arrow $a$, all
compatible with the grading. Its morphisms are vertexwise graded
$\widetilde K$-linear maps preserving every incident filtration.

A morphism in a quasi-abelian category is \emph{strict} when its canonical
map from coimage to image is an isomorphism
\cite[Definition~4.1]{BenBassatKremnizer}.  The natural exact structure
consists of strict short exact sequences
\cite[Definition~4.4 and Theorem~4.8]{BenBassatKremnizer}, and
$K_0(\mathcal C_\star(Q))$ is the Grothendieck group generated by object
classes with the relations $[B]=[A]+[C]$ for all such strict short exact sequences
$0\to A\to B\to C\to0$. Concretely, a strict subobject in
$\mathcal C_\star(Q)$ is a graded vertexwise subspace whose incident
filtrations are obtained by intersection with the corresponding ambient
filtrations.

\begin{definition}\label{def:gradedflagobject}
For a split normed representation $(E,\rho,h)$, define
\[
 \GF_{Q,K}(E,\rho,h)
 :=\left(
   (\gr_{h_i}E_i)_{i\in Q_0},
   (\mathcal F^-(\rho(a)),\mathcal F^+(\rho(a)))_{a\in Q_1}
 \right)\in\mathcal C_\star(Q).
\]
For a non-expansive intertwiner $g$, write
\[
 \begin{aligned}
  g=(g_i)&:(E,\rho,h)\longrightarrow(E',\rho',h'),\\
  \GF_{Q,K}(g)&=(\gr g_i)_{i\in Q_0},
  \qquad \gr g_i:\gr_{h_i}E_i\longrightarrow\gr_{h_i'}E_i'.
 \end{aligned}
\]
By \cite[Lemma~6.15]{HKKP}, the maps $\gr g_i$ preserve every incident filtration. This defines a functor
\[
 \GF_{Q,K}:\mathcal C^{\mathrm{sp}}(Q)\longrightarrow\mathcal C_\star(Q),
\]
and $\GF_{Q,K}(E,\rho,h)$ is the \emph{graded-flag object} of $(E,\rho,h)$.
\end{definition}

\begin{remark}\label{rem:coreinterpretation}
Under HKKP's standing spherical-completeness hypothesis every
finite-dimensional norm is split \cite[Proposition~6.9]{HKKP}, and the functor
$\GF_{Q,K}$ is the one they denote by $\operatorname{Core}$.  There are no residual
arrow maps between the graded vertex spaces: each $\rho(a)$ is recorded by
its two singular filtrations.  The arrow maps need not be non-expansive;
non-expansiveness is required only of morphisms in $\mathcal C(Q)$.

For an arbitrary, possibly nonsplit norm, the same graded quotients can be
formed, but their total residual dimension is at most the original dimension,
with equality exactly for split norms.  Indeed, linearly independent
homogeneous initial forms lift to an orthogonal family; their number is at
most the original dimension, with equality exactly when the lifts form an
orthogonal basis.  Thus nonsplit norms can lose dimension under graded
reduction.  We reserve \emph{HKKP harmonic} for split
norms and use \emph{variationally harmonic} for minimizers in $\Nall(E)$. 
Split attainment is what connects these notions.
\end{remark}

For an increasing filtration $\mathcal F$ on a graded space $V$, put
$\gr_\lambda^{\mathcal F}V=\mathcal F_{\leq\lambda}/\mathcal F_{<\lambda}$;
for a decreasing filtration, use
$\gr_\lambda^{\mathcal F}V=\mathcal F_{\geq\lambda}/\mathcal F_{>\lambda}$.
The degree homomorphism
$\theta^\star:K_0(\mathcal C_\star(Q))\to\R$ is
\begin{align}\label{eq:coredegree}
 \theta^\star(V)=\sum_i\theta_i\dim V_i
 +\sum_{a,\lambda}f_a'(\lambda)\bigl(&\dim\gr^{\mathcal F^{a^-}}_\lambda V_{s(a)}
 -\dim\gr^{\mathcal F^{a^+}}_\lambda V_{t(a)}\bigr).
\end{align}
Here dimensions are taken over $\widetilde K$, and the sum over $\lambda$
runs over the finitely many jumps of the incident filtrations.
\begin{lemma}\label{lem:coretotaldegree}
For every split norm $h$ of $E$,
\[
 \theta^\star\bigl(\GF_{Q,K}(E,\rho,h)\bigr)=\theta(E).
\]
\end{lemma}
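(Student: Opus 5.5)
We split $\theta^\star\bigl(\GF_{Q,K}(E,\rho,h)\bigr)$ according to \fulleqref{eq:coredegree} into a vertex part and an arrow part, and show that the vertex part equals $\theta(E)$ while the arrow part vanishes arrow by arrow.

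\emph{Vertex part.} Since $h_i$ is split, choose an $h_i$-orthogonal basis $(e_1,\ldots,e_n)$ of $E_i$. For each $\alpha\in A$, the residues of the rescaled vectors $c_je_j$ with $\ln h_i(c_je_j)=\alpha$ form a $\widetilde K$-basis of $(E_i)^{h_i}_\alpha$. Spanning is clear from orthogonality. Linear independence holds because a nontrivial residue relation would lift to a vector of norm $<e^\alpha$, which contradicts the max formula. Hence $\sum_\alpha\dim_{\widetilde K}(E_i)^{h_i}_\alpha=n=\dim_KE_i$; this is the equality case already noted in \namedref{Remark}{rem:coreinterpretation}. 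The vertex term is therefore $\sum_i\theta_i\dim E_i=\theta(E)$.

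\emph{Arrow part.} Fix an arrow $a$ and write $T=\rho(a)$. We need
\[
 \sum_\lambda f_a'(\lambda)\Bigl(\dim\gr^{\mathcal F^-(T)}_\lambda\gr_{h_{s(a)}}E_{s(a)}
 -\dim\gr^{\mathcal F^+(T)}_\lambda\gr_{h_{t(a)}}E_{t(a)}\Bigr)=0,
\]
with the sum running over finite jumps only. The paper has already asserted, with proof deferred to the weighted monomial form of \namedref{Lemma}{lem:monomialcertificate}, that $\mathcal F^-(T)$ and $\mathcal F^+(T)$ have the same finite jumps with the same graded multiplicities. We also know that the persistent kernel and the cokernel contribute no finite jump. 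So at every $\lambda$ the two dimensions agree, each being the multiplicity of $\lambda$ among $\sigma_1\geq\cdots\geq\sigma_r$, and every summand vanishes. For $T=0$ both lists are empty.

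If one prefers not to cite that lemma forward, the arrow part is the main step, and I would argue it directly. Choose orthogonal bases of source and target in which $T$ is weighted-monomial, i.e.\ $Te_j=\mu_jf_j$ for $j\leq r$ and $Te_j=0$ otherwise. This is a non-Archimedean singular value decomposition for split norms, obtained inductively via \namedref{Lemma}{lem:adapted}: pick $v$ maximizing $h_W(Tv)/h_V(v)$, split off $Kv$ and $KTv$ orthogonally, and recurse. In these bases both filtrations jump exactly at $\sigma_j=\ln h_W(Tf_j\mu_j)-\ln h_V(e_j)$, one dimension each, so the multiplicities match. The existence of such adapted bases is the only genuinely nontrivial point. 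A second point needing care is that the maximum of $h_W(Tv)/h_V(v)$ may fail to be attained when $K$ is not spherically complete. I would handle this by noting that, for split norms, the ratio is a maximum of finitely many monomial ratios in orthogonal coordinates and is therefore attained.

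Adding the vertex part and the vanishing arrow parts gives $\theta^\star\bigl(\GF_{Q,K}(E,\rho,h)\bigr)=\theta(E)$.
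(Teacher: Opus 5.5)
Your proposal is correct and is essentially the paper's proof: the vertex term equals $\theta(E)$ because a split norm has $\dim_{\widetilde K}\gr_{h_i}E_i=\dim_KE_i$, and the arrow terms cancel because, by Lemma~\ref{lem:monomialcertificate}, the source and target singular filtrations have the same finite jumps with the same multiplicities. Your optional direct argument needs one correction, since an arbitrary orthogonal complement of $Kv$ need not be mapped into a complement of $KTv$. Take instead $T^{-1}(W')$ for an orthogonal complement $W'$ of $KTv$; this is orthogonal to $Kv$ precisely because $v$ attains the operator norm. Also, the jump formula should read $\sigma_j=\ln h_W(Te_j)-\ln h_V(e_j)$.
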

\begin{proof}
For each arrow, the finite jumps of its source and target singular
filtrations agree, including their multiplicities, by
\namedref{Lemma}{lem:monomialcertificate}.  The two arrow terms in
\fulleqref{eq:coredegree} therefore cancel.  Since $h$ is split,
$\dim_{\widetilde K}\gr_{h_i}E_i=\dim_KE_i$ at every vertex, leaving
$\sum_i\theta_i\dim_KE_i=\theta(E)$.
\end{proof}
\begin{sourcequote}{HKKP,
\cite[Definition~6.20, main text; footnote omitted]{HKKP}.}
A norm $\|\cdot\|$ on a representation $(E,\rho)$ of $Q$ over $K$, i.e.\ a lift to $\mathcal C(Q)$, is \textbf{harmonic} if $\operatorname{Core}(E,\rho,\|\cdot\|)\in\mathcal C_\star(Q)$ is $\theta^\star$-semistable.
\end{sourcequote}
Here a \emph{lift} means a preimage under the forgetful functor
$U:\mathcal C(Q)\to\operatorname{Rep}_K(Q)$,
$(E,\rho,h)\mapsto(E,\rho)$. Thus a lift of $(E,\rho)$ amounts to a
choice of norms $h_i$ on its vertex spaces $E_i$.
Consequently, once $K$, $Q$, $\theta$, $(f_a)_{a\in Q_1}$, and
$(E,\rho)$ are fixed, the only variable in this definition is the
norm $h=(h_i)_{i\in Q_0}$.

Thus, in our notation, a split norm $h$ is harmonic precisely when
$\GF_{Q,K}(E,\rho,h)$ is $\theta^\star$-semistable.  Semistability means
degree zero and nonnegative degree for every strict subobject of
$\mathcal C_\star(Q)$.  In particular,
\namedref{Lemma}{lem:coretotaldegree} shows that harmonicity forces
$\theta(E)=0$.

Now fix the representation $(E,\rho)$ and a nonzero determinant vector in
each $\det E_i$.  The determinant vectors affect the potential only by
an additive constant and are not part of the definition of harmonicity.
HKKP characterize harmonicity through minimization of their convex potential
\begin{equation}\label{eq:HKKPpotential}
 \begin{aligned}
 S_E:\Ndiag(E)=\prod_{i\in Q_0}\Ndiag(E_i)&\longrightarrow\R,\\
 h=(h_i)_i&\longmapsto
 \sum_{a\in Q_1}\sum_{j=1}^{r_a}f_a(\sigma_{a,j}(h))
 -\sum_{i\in Q_0}\theta_i\ell_{E_i}(h_i).
 \end{aligned}
\end{equation}
Here $r_a=\rk\rho(a)$, and $\sigma_{a,j}(h)$ is the $j$th logarithmic
singular value of
\[\rho(a):(E_{s(a)},h_{s(a)})\to(E_{t(a)},h_{t(a)}).\]  Thus $h$ is the
variable, while the representation, weights, ramps, and determinant vectors
are fixed. Convexity is with respect to the canonical geodesics in this norm
space.
For every $c>0$, simultaneous rescaling satisfies
$S_E(ch)=S_E(h)-(\ln c)\theta(E)$. When $\theta(E)=0$, $S_E$ therefore
descends to a function on simultaneous-homothety classes.
The equivalence between harmonicity and minimization is
\cite[Lemma~6.28]{HKKP}.  HKKP work under spherical completeness throughout
\cite[Section~6.3]{HKKP}.  \namedref{Corollary}{cor:splitcriterionallfields}
extends their characterization to split norms over every complete $K$
by a direct first-variation computation.  Existence in the
completed norm space is proved in \namedref{Theorem}{thm:universalvariational}, while
split attainment is proved in \namedref{Theorem}{thm:splitattainment}.

\subsection{Weighted matrices}

For a real weight vector $w=(w_1,\ldots,w_d)\in\R^d$, define
$q_w:K^d\to\R_{\geq0}$ by $q_w(x)=\max_j e^{w_j}|x_j|$.  If
$u\in\R^n$, $w\in\R^m$, and $C\in M_{m\times n}(K)$ is viewed as a map
$C:(K^n,q_u)\to(K^m,q_w)$, then
\begin{equation}\label{eq:weightedmatrixnorm}
 \|C\|_{q_u\to q_w}=\max_{i,j}|C_{ij}|e^{w_i-u_j}.
\end{equation}
An invertible $C$ is an isometry precisely when it and its inverse are
non-expansive.  For a non-expansive $C$, the induced graded map retains exactly
the entries for which $|C_{ij}|e^{w_i-u_j}=1$, together with their leading
coefficients.

\begin{lemma}\label{lem:monomialcertificate}
Let $B:(L^n,q_u)\to(L^m,q_w)$ be linear over a non-Archimedean field $L$. There are weighted isometries $C_s,C_t$ of the
source and target, respectively, such that
\[
 B C_s=C_tD,
\]
where $D$ has at most one nonzero entry in each row and each column, and has
exactly $\rk B$ nonzero entries. In the coordinates defined by $C_s,C_t$,
both singular filtrations are coordinate filtrations. Each nonzero entry
$d$ in position $(i,j)$ has singular value $\ln|d|+w_i-u_j$.
\end{lemma}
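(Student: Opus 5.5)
We reduce Lemma~\ref{lem:monomialcertificate} to a weighted Smith-type elimination. Weighted isometries of $(L^n,q_u)$ are exactly the invertible $C$ with $C$ and $C^{-1}$ non-expansive, by \fulleqref{eq:weightedmatrixnorm}. Rescaling coordinates by elements of $L^\times$ changes the weights by elements of $\Gamma_L$ but is not an isometry. So we keep the weights fixed and use only three kinds of moves:
\begin{itemize}
\item permutations of coordinates with equal weight, which we avoid needing;
\item elementary column operations $e_k\mapsto e_k+c\,e_j$ with $|c|e^{u_j}\leq e^{u_k}$;
\item elementary row operations with the analogous weighted bound.
\end{itemize}
Each elementary operation and its inverse is non-expansive by \fulleqref{eq:weightedmatrixnorm}, so each is a weighted isometry. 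Products of isometries are isometries.

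\emph{Step 1: the pivot.} If $B=0$, take $D=0$. Otherwise choose $(i_0,j_0)$ maximizing the weighted size $|B_{ij}|e^{w_i-u_j}$. This maximum is $\|B\|$, the largest singular value. For $k\ne j_0$, clear row $i_0$ by the column operation $e_k\mapsto e_k-(B_{i_0k}/B_{i_0j_0})e_{j_0}$. The required bound $|B_{i_0k}/B_{i_0j_0}|e^{u_{j_0}}\leq e^{u_k}$ is exactly the maximality inequality $|B_{i_0k}|e^{w_{i_0}-u_k}\leq|B_{i_0j_0}|e^{w_{i_0}-u_{j_0}}$. Symmetrically, clear column $j_0$ by row operations on the target. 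In target coordinates such an operation replaces $f_{i_0}$ by $f_{i_0}+\sum_i c_if_i$, and the bound needed is again the maximality inequality, now along column $j_0$.

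\emph{Step 2: induction.} The resulting matrix is the entry $B_{i_0j_0}$ plus a block $B'$ on the remaining coordinates, with the restricted weights. The new entries of $B'$ have weighted size at most the pivot's, by the ultrametric inequality. We induct on $\min(m,n)$, applying the same procedure to $B'$. Isometries of the complementary coordinate subspaces, extended by the identity on the pivot coordinate, remain isometries of the whole space because the norm is a max over coordinates. The procedure ends with a matrix $D$ having at most one nonzero entry in each row and column. The number of nonzero entries equals $\rk D=\rk B$.

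\emph{Step 3: singular filtrations.} The main point is to check that in the new coordinates both filtrations of the statement are coordinate filtrations with jumps $\ln|d|+w_i-u_j$. Since $C_s,C_t$ are isometries, they induce graded isomorphisms and preserve the filtrations; this is the functoriality of \cite[Lemma~6.15]{HKKP}, or a direct check. So it suffices to treat $D$ itself.

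For $D$ we compute directly. Take $v=\sum_j x_je_j$ representing a homogeneous class of degree $\alpha$. Then $q_w(Dv)=\max|d_j||x_j|e^{w_{i(j)}}$. A class lies in $\mathcal F^-_{\leq\beta}$ exactly when every coordinate $j$ of its leading form either lies in the kernel, or has $\ln|d_j|+w_{i(j)}-u_j\leq\beta$. Here we use that the leading part of $v$ lives on coordinates with $|x_j|e^{u_j}=e^\alpha$, and that the lower-order terms can be dropped from the representative.

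The same argument applies to $\mathcal F^+$: its jumps on target coordinate $i(j)$ are the same values, and coordinates outside the image persist. Counting coordinates gives equal finite jumps with equal graded multiplicities.

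The one subtle point is the freedom of choosing a representative $v$ in the definition of $\mathcal F^-$. This is handled by orthogonality of the coordinates, which lets us take the minimal representative coordinatewise.
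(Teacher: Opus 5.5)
Your proposal is correct and follows the paper's proof essentially step for step. You pivot at an entry of maximal weighted size, clear its row by a source basis change and its column by a target basis change (both weighted isometries by the maximality inequality), induct on the complementary block, and then read the singular filtrations off the monomial matrix after transporting them along the isometries. One wording point to tighten: target coordinates outside the image lie in no step of $\mathcal F^+$ (they form the persistent cokernel), whereas kernel coordinates lie in every step of $\mathcal F^-$.
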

\begin{proof}
If $B=0$, take $C_s$ and $C_t$ to be the identity maps and $D=0$.
Otherwise, choose a nonzero entry attaining the maximum in~\fulleqref{eq:weightedmatrixnorm}, in position $(i_0,j_0)$. For $j\ne j_0$, replace the source basis vector $e_j$ by
\[
 e_j-\frac{B_{i_0j}}{B_{i_0j_0}}e_{j_0}.
\]
Maximality gives $|B_{i_0j}/B_{i_0j_0}|e^{u_{j_0}}\leq e^{u_j}$, so this change and its inverse are non-expansive. Next, replace the target basis vector $f_{i_0}$ by
\[
 f_{i_0}+\sum_{i\ne i_0}\frac{B_{ij_0}}{B_{i_0j_0}}f_i.
\]
Again, maximality gives
$|B_{ij_0}/B_{i_0j_0}|e^{w_i}\leq e^{w_{i_0}}$ for every $i\ne i_0$, so
the displayed target-basis change and its inverse are non-expansive. Thus both
basis changes are weighted isometries. The remaining source and target
coordinate spans inherit the restricted weighted max norms, and the pivot row and column have
no other nonzero entries. The identical argument therefore applies
inductively to the complementary block, terminating when that block is zero.
There are at most $\min(m,n)$ pivots. On the resulting monomial matrix the
assertions about the singular filtrations follow directly from their
definitions: a source coordinate of singular value $\sigma$ belongs to
$\mathcal F^-(B)_{\leq\beta}$ exactly when $\sigma\leq\beta$, and the
corresponding target coordinate belongs to $\mathcal F^+(B)_{\geq\beta}$
exactly when $\sigma\geq\beta$. Kernel coordinates belong to every step of
the source filtration, and target coordinates outside the image belong to
none of the target steps. Equal singular values are grouped together.
\end{proof}

\section{Uniform coercivity and existence of minimizers}\label{sec:coercivity}

This section proves a linear lower bound for the potential modulo
simultaneous homothety. The bound is uniform over all norms and supplies
the coercivity needed in \namedref{Theorem}{thm:universalvariational}.

\subsection{The potential on the space of all norms}

For a norm $h$ on $V$, give $\bigwedge^jV$ the quotient of the projective ultrametric tensor norm. The resulting norm $h_{\wedge j}:\bigwedge^jV\to\R_{\geq0}$ is
\begin{equation}\label{eq:exteriornorm}
 h_{\wedge j}(\xi)=
 \inf_{\xi=\sum_\ell v_{\ell1}\wedge\cdots\wedge v_{\ell j}}
 \max_\ell\prod_{q=1}^j h(v_{\ell q}).
\end{equation}
For a split norm $q$, the corresponding exterior norm assigns each wedge
basis vector the product of its basis weights. Comparing $h$ with $q$
as above gives $C^{-j}q_{\wedge j}\leq h_{\wedge j}\leq C^jq_{\wedge j}$,
so \fulleqref{eq:exteriornorm} is positive on every nonzero exterior vector.
For fixed norms $h_V,h_W$, the operator norm is the function
\[
 \|\cdot\|_{h_V\to h_W}:\operatorname{Hom}_K(V,W)\longrightarrow\R_{\geq0},
 \qquad T\longmapsto\sup_{v\ne0}\frac{h_W(Tv)}{h_V(v)}.
\]
The operator norm of the zero map is zero; this convention includes the
unique map with source $V=\{0\}$, for which the displayed supremum is empty.
For a fixed rank-$r$ linear map $T:V\to W$, its exterior power is
$\bigwedge^jT:\bigwedge^jV\to\bigwedge^jW$.  If $r>0$, then
$\bigwedge^jT\ne0$ for $1\leq j\leq r$, so the logarithms below are finite.
Define the real-valued functions
\[
 s_j(T;-,-),\sigma_j(T;-,-):\Nall(V)\times\Nall(W)\to\R
\]
by
\begin{equation}\label{eq:extendedsingularvalues}
 s_j(T;h_V,h_W)=\ln\|\textstyle\bigwedge^jT\|,\qquad
 \sigma_j=s_j-s_{j-1},\qquad s_0=0,\quad 1\leq j\leq r.
\end{equation}
The operator norm here uses the exterior norms induced by $h_V$ and $h_W$.
When the source and target norms are supplied by a norm $h$ on a
representation, we abbreviate these functions as $s_j(T;h)$ and
$\sigma_j(T;h)$.
The tuple $(\sigma_1(T;h_V,h_W),\ldots,\sigma_r(T;h_V,h_W))$ is the
\emph{singular profile} of $T$. For an arrow map $T=\rho(a)$, equipped with
the norms at its source and target vertices, this is its
\emph{arrow singular profile}.
When $r=0$, both the $s_j$-list and the singular-value list are empty.

\begin{lemma}\label{lem:allnormprofile}
The functions $s_j$ in~\fulleqref{eq:extendedsingularvalues} are continuous. More precisely, if the source and target norms change by at most $\delta_s$ and $\delta_t$ in $d_\infty$, then
\begin{equation}\label{eq:profilelipschitz}
 |s_j(T;h_s,h_t)-s_j(T;k_s,k_t)|
 \leq j(\delta_s+\delta_t).
\end{equation}
For split norms, $s_j=\sigma_1+\cdots+\sigma_j$ for the HKKP singular values. Consequently the successive differences in~\fulleqref{eq:extendedsingularvalues} are decreasing for arbitrary norms, and~\fulleqref{eq:HKKPpotential} has a unique continuous extension, denoted by the same symbol,
\[
 S_E:\Nall(E)=\prod_{i\in Q_0}\Nall(E_i)\longrightarrow\R.
\]
\end{lemma}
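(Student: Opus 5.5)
We prove the three assertions of Lemma~\ref{lem:allnormprofile} in order: the Lipschitz estimate, the split identity, and then the consequences (concavity of the partial sums and the continuous extension of $S_E$).

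\emph{Lipschitz estimate.} Suppose $d_\infty(h_s,k_s)\leq\delta_s$, so that $e^{-\delta_s}h_s\leq k_s\leq e^{\delta_s}h_s$. Inserting this into the defining infimum~\fulleqref{eq:exteriornorm}, each $j$-fold product changes by at most the factor $e^{j\delta_s}$. Hence $e^{-j\delta_s}(h_s)_{\wedge j}\leq(k_s)_{\wedge j}\leq e^{j\delta_s}(h_s)_{\wedge j}$, and the target exterior norms obey the same bound with $\delta_t$. The operator norm of $\bigwedge^jT$ is a supremum of the ratios $(h_t)_{\wedge j}(\bigwedge^jT\,\xi)/(h_s)_{\wedge j}(\xi)$. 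Each such ratio changes by a factor of at most $e^{j(\delta_s+\delta_t)}$. Taking logarithms gives~\fulleqref{eq:profilelipschitz}, and continuity of the $s_j$ follows.

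\emph{Split identity.} Let the source and target norms be split. Apply Lemma~\ref{lem:monomialcertificate}, which supplies weighted isometries $C_s,C_t$ with $TC_s=C_tD$ and $D$ monomial. Weighted isometries induce isometries of the exterior norms, because the exterior norm of a split norm gives each wedge basis vector the product of its basis weights. So we may assume $T=D$. We then need two facts for a monomial map.

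\begin{itemize}
\item For the split norms, the wedge basis vectors are orthogonal. This is the step I expect to need the most care: a wedge basis of an orthogonal basis must be shown to be orthogonal for the quotient-tensor norm, not merely to carry the product weights. I would check it through the standard comparison that any expression $\xi=\sum_\ell v_{\ell1}\wedge\cdots\wedge v_{\ell j}$ has each coordinate bounded by $\max_\ell\prod_q h(v_{\ell q})$, by multilinear expansion and the ultrametric inequality.
\item Given orthogonality, $\bigwedge^jD$ is again monomial in the wedge bases, with the entry for a $j$-subset of pivots having logarithmic size $\sum(\ln|d|+w_i-u_j)$ over that subset. By~\fulleqref{eq:weightedmatrixnorm}, its operator norm is the maximum of these sums, namely the sum of the $j$ largest singular values.
\end{itemize}
This gives $s_j=\sigma_1+\cdots+\sigma_j$ for split norms.

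\emph{Consequences.} For split norms the differences $s_j-s_{j-1}=\sigma_j$ are decreasing, since the HKKP singular values are listed in decreasing order. This makes the inequalities $2s_j\geq s_{j-1}+s_{j+1}$ closed conditions. The $s_j$ are continuous on $\Nall$ and $\Ndiag$ is dense for $d_\infty$ by \cite{BE}, so these inequalities persist on all of $\Nall$. The extended $\sigma_j$ are therefore decreasing on $\Nall$.

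For the potential, rewrite~\fulleqref{eq:HKKPpotential} as $\sum_a\sum_jf_a(s_j-s_{j-1})-\sum_i\theta_i\ell_{E_i}$. Each $\ell_{E_i}$ is also $d_\infty$-Lipschitz, with constant $\dim E_i$, by the same comparison of determinant norms. Every term is thus a continuous function on $\Nall(E)$ that agrees with the original potential on $\Ndiag(E)$. Continuity holds for $d_\infty$, and hence for $d_2$ by the comparison on the completion. Because $\Ndiag(E)$ is dense, this continuous extension is unique.
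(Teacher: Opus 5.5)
Your proposal is correct and follows the paper's proof essentially step for step. You use the same infimum comparison for the Lipschitz bound, the same orthogonal wedge basis plus weighted Gaussian elimination for the split identity (which you package as Lemma~\ref{lem:monomialcertificate}), and the same density argument for the ordering of the differences and for uniqueness of the extension. Your Leibniz-expansion check of wedge-basis orthogonality and your direct $\dim E_i$-Lipschitz bound for $\ell_{E_i}$ only fill in details that the paper asserts without proof or cites from \cite[Lemma~2.3]{BE}.
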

\begin{proof}
If $e^{-\delta}h\leq k\leq e^\delta h$, the infimum in~\fulleqref{eq:exteriornorm} gives
$e^{-j\delta}h_{\wedge j}\leq k_{\wedge j}\leq e^{j\delta}h_{\wedge j}$.
Applying this at the source and target proves~\fulleqref{eq:profilelipschitz}.
If $(e_i)$ is an orthogonal basis with weights $h(e_i)$, then the wedges
$e_{i_1}\wedge\cdots\wedge e_{i_j}$ form an orthogonal basis whose weights
are $\prod_qh(e_{i_q})$.  Thus, in split orthogonal source and target bases,
the operator norm of $\bigwedge^jT$ is the maximum of the weighted
$j$-minor norms.  Weighted Gaussian elimination identifies its logarithm
with the sum of the largest $j$ HKKP singular values.  Hence the
successive-difference formula agrees with the HKKP singular values on the
dense split locus.  The displayed formula is continuous, so this agreement
also proves uniqueness of the extension; the ordering of the differences
passes to the limit.  Continuity of determinant norms is
\cite[Lemma~2.3]{BE}.
\end{proof}

\begin{lemma}\label{lem:allnormconvexity}
Let $T:V\to W$ have rank $r$.  Each function $s_j(T;-,-)$ is convex along
canonical $d_2$-geodesics in $\Nall(V)\times\Nall(W)$.  Consequently, if
$c_1\geq\cdots\geq c_r\geq0$, then
\[
 (h_V,h_W)\longmapsto\sum_{j=1}^r c_j\sigma_j(T;h_V,h_W)
\]
is geodesically convex. If $f:\R\to\R$ is increasing and convex, then
$\sum_{j=1}^rf(\sigma_j(T;-,-))$ is geodesically convex.
For each finite-dimensional space $V$
and each fixed $0\ne\omega\in\det V$, the function
$h\mapsto\ln h_{\det}(\omega)$ is affine along canonical geodesics.  In particular, the continuous extension
$S_E:\Nall(E)\to\R$ is geodesically convex.
\end{lemma}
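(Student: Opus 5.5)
The convexity statements reduce to the split locus and then pass to the completion by density and continuity.

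\textbf{Step 1: a single $s_j$ on the split locus.} Take two split pairs $(h_V,h_W)$ and $(k_V,k_W)$. By \cite[Proposition~1.14]{BE}, choose a common apartment at the source and another at the target. By \namedref{Definition}{def:canonicalgeodesics}, the canonical geodesic is linear interpolation of the logarithmic weight vectors in these bases, with the same parameter $t$ at source and target. In these orthogonal bases the wedge basis is orthogonal, with weights given by products of basis weights (proof of \namedref{Lemma}{lem:allnormprofile}). Hence \fulleqref{eq:weightedmatrixnorm} applied to $\bigwedge^jT$ gives
\[
 s_j(T;[h,k]_t)=\max_{I,J}\Bigl(\ln|\det T_{I,J}|+\sum_{i\in I}w_i(t)-\sum_{j'\in J}u_{j'}(t)\Bigr),
\]
where $T_{I,J}$ runs over the $j\times j$ minors of $T$ in these bases. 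Each term with nonzero minor is affine in $t$, so $s_j$ is a finite maximum of affine functions of $t$ and therefore convex.

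\textbf{Step 2: passage to all norms.} Geodesics in the complete CAT(0) space are limits of geodesics between approximating split endpoints, by CAT(0) comparison: geodesics depend continuously on endpoints, uniformly in $t$. The estimate \fulleqref{eq:profilelipschitz}, together with $d_\infty\le d_2$, makes $s_j$ continuous for $d_2$. Convexity inequalities are closed conditions, so convexity of $s_j$ extends to $\Nall(V)\times\Nall(W)$.

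\textbf{Step 3: weighted sums and ramps.} For the weighted sums, use Abel summation:
\[
 \sum_j c_j\sigma_j=\sum_{j=1}^{r}(c_j-c_{j+1})s_j, \qquad c_{r+1}:=0.
\]
All coefficients are nonnegative, so the sum is convex. For $f$ increasing and convex, I would first try the following majorization argument. Since $\sigma_1\ge\cdots\ge\sigma_r$ (\namedref{Lemma}{lem:allnormprofile}), along a geodesic the vector $\sigma(t)$ satisfies a weak submajorization $\sigma(t)\prec_w(1-t)\sigma(0)+t\sigma(1)$. This holds because each partial sum $s_j$ is convex, and the partial sums of the sorted vector $(1-t)\sigma(0)+t\sigma(1)$ are exactly $(1-t)s_j(0)+ts_j(1)$, since both endpoint vectors are decreasing. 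The Karamata/Tomić–Weyl inequality for increasing convex $f$ then gives
\[
 \sum_j f(\sigma_j(t))\le\sum_j f\bigl((1-t)\sigma_j(0)+t\sigma_j(1)\bigr)\le(1-t)\sum_j f(\sigma_j(0))+t\sum_j f(\sigma_j(1)).
\]

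\textbf{Step 4: determinant terms.} On the split locus, in an apartment, $\ln h_{\det}(\omega)=\sum_j w_j+\mathrm{const}$ by the wedge-weight formula. This is affine in $t$. Continuity of determinant norms (\cite[Lemma~2.3]{BE}) extends affineness to $\Nall(V)$ by the same limiting argument as in Step 2.

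\textbf{Conclusion.} $S_E$ is a sum of the convex ramp terms of Step 3, one for each arrow, where each arrow's terms depend on the geodesic through the source and target factors, minus $\theta_i$ times affine determinant terms. Note that a weak ramp is increasing and convex on $\R$, since $f'>0$ and $f'$ is strictly increasing. So $S_E$ is geodesically convex, with product geodesics taken vertexwise.

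\textbf{Main obstacle.} The delicate point is Step 2: showing that the canonical geodesic between nonsplit norms is the uniform limit of split canonical geodesics. I would derive this from the CAT(0) inequality $d(\gamma_t,\gamma'_t)\le(1-t)d(\gamma_0,\gamma'_0)+t\,d(\gamma_1,\gamma'_1)$, applied in the product space, which is CAT(0) by \namedref{Proposition}{prop:productcompletion}.
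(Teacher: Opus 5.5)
Your proposal is correct and follows essentially the paper's own proof. The ingredients coincide: the minor-maximum formula for $s_j$ on a common split product apartment, Abel summation and weak submajorization (via the Tomić--Weyl inequality) for the weighted and ramp sums, affineness of determinant logarithms in apartment coordinates, and passage to arbitrary endpoints. That last step uses the CAT(0) estimate $d_2(\gamma_n(t),\gamma(t))\le(1-t)\,d_2(x_0^{(n)},x_0)+t\,d_2(x_1^{(n)},x_1)$ together with the Lipschitz bound of \namedref{Lemma}{lem:allnormprofile}.

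The only difference is the order of steps, and both orders work. You extend convexity of each $s_j$ to all norms first, then run the Abel-summation and majorization arguments directly on $\Nall(V)\times\Nall(W)$. This is legitimate because the $\sigma_j$ are decreasing for arbitrary norms. The paper instead proves every inequality for split endpoints and passes each one to the limit.
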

\begin{proof}
First suppose that the endpoints are split and choose a common product
apartment, with source weights $u$ and target weights $w$. In its fixed
bases let $T_{I,J}$ denote a $j\times j$ minor of the matrix of $T$. The
weighted exterior operator norm gives
\begin{equation}\label{eq:partialminorformula}
 s_j(T;q_u,q_w)=
 \max_{\substack{|I|=|J|=j\\ \det T_{I,J}\ne0}}
 \left(\ln|\det T_{I,J}|+\sum_{i\in I}w_i-\sum_{k\in J}u_k\right).
\end{equation}
This is a finite maximum of affine functions, hence is convex along the
joining segment. With
$c_{r+1}=0$,
\[
 \sum_{j=1}^r c_j\sigma_j
 =\sum_{j=1}^r(c_j-c_{j+1})s_j,
\]
so the asserted linear combinations are convex.

Let $x_j,z_j$ be the decreasing singular-value lists at the endpoints, let
$y_j$ be the list at parameter $t$, and put
$w_j=(1-t)x_j+tz_j$.  Convexity of the partial sums gives
\[
 \sum_{j=1}^q y_j\leq\sum_{j=1}^q w_j,
 \qquad 1\leq q\leq r.
\]
Thus $y$ is weakly submajorized by $w$.  The majorization inequality for
increasing convex functions \cite[Chapter~3, C.1.b, p.~92]{MOA}, followed by
scalar convexity, gives, as in \cite[Lemma~6.27]{HKKP},
\[
 \sum_{j=1}^r f(y_j)
 \leq\sum_{j=1}^r f(w_j)
 \leq(1-t)\sum_{j=1}^r f(x_j)+t\sum_{j=1}^r f(z_j).
\]
The full determinant logarithms on $V$ and $W$ are affine in the apartment
coordinates.

For arbitrary endpoints $x_0,x_1$, choose split
$x_0^{(n)}\to x_0$ and $x_1^{(n)}\to x_1$.  If $\gamma_n$ and $\gamma$ are
the corresponding constant-speed geodesics, convexity of distance between
geodesics in a CAT(0) space gives
\[
 d_2\bigl(\gamma_n(t),\gamma(t)\bigr)
 \leq(1-t)d_2(x_0^{(n)},x_0)+t\,d_2(x_1^{(n)},x_1).
\]
Hence $\gamma_n(t)\to\gamma(t)$ for every $t$.
\namedref{Lemma}{lem:allnormprofile} permits passage to the limit in the
convexity inequalities; continuity of determinant norms does the same for
the affine identity.  Summing proves convexity of $S_E$.
\end{proof}

\subsection{Closed incidence images and invariant coordinate cuts}

We use Moret-Bailly's proper-image consequence of Greenberg approximation.

\begin{theorem}\label{thm:MBclosedimage}
Let $K$ be a complete, nontrivially valued non-Archimedean field, and let
$\pi:X\to Y$ be a proper morphism of finite-type $K$-schemes. Then
$\pi(X(K))\subseteq Y(K)$ is closed for the valuation topology.
\end{theorem}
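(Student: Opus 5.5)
The statement is Moret-Bailly's closed-image theorem \cite[Theorem~1.3]{MB}, and in the paper I would simply invoke it. If a self-contained argument were wanted, I would argue as follows. Closedness is local on $Y$, so I may assume $Y=\operatorname{Spec}B$ is affine. I would then run noetherian induction on the closed subschemes of $X$, using Chow's lemma at each step. This gives a projective $Y$-scheme $X'$, a proper surjection $X'\to X$, and a dense open $U\subseteq X$ over which $X'\to X$ is an isomorphism. Every $K$-point of $X$ then lies either in $U(K)$, and so lifts to $X'(K)$, or in $(X\setminus U)(K)$. Hence
\[
 \pi(X(K))=\pi'(X'(K))\cup\pi\bigl((X\setminus U)(K)\bigr).
\]
The second set is closed by the induction hypothesis, and a finite union of closed sets is closed. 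This reduces everything to the case of a closed subscheme $X\subseteq\mathbb P^N_Y$.

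In the projective case, take $y_n=\pi(x_n)\to y$ in $Y(K)$. Since $K^\circ$ is a valuation ring, each $x_n$ has homogeneous coordinates in $(K^\circ)^{N+1}$ with at least one coordinate a unit. Passing to a subsequence, I may fix the index of that unit coordinate. Choose generators of the ideal of $X$, homogeneous in the fibre variables with coefficients in $B$. After rescaling $y$ into a bounded affine chart, the problem becomes a polynomial system $F(y,z)=0$ in $z\in(K^\circ)^N$ with coefficients in a fixed finitely generated $K^\circ$-algebra. The points $x_n$ then supply solutions $z_n$ of $F(y,z)\equiv0$ modulo ideals $\{|\cdot|\leq\varepsilon_n\}$ with $\varepsilon_n\to0$, because $F(y_n,z_n)=0$ and $F$ is uniformly Lipschitz on the unit polydisc.

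The remaining step is to turn these approximate solutions over $K^\circ$ into an exact solution at $y$. This is Greenberg-type strong approximation, and it is the main obstacle. When $K$ is locally compact it is immediate, since $(K^\circ)^N$ is compact and one takes a limit point of $(z_n)$. In general $K^\circ$ is neither compact nor noetherian, and no subsequence of $(z_n)$ need converge. My first attempt would follow Moret-Bailly: use that $K^\circ$ is henselian, being complete of rank one, and apply a Greenberg/Elkik approximation for the finitely presented $K^\circ$-scheme defined by $F$. That result says an approximate solution of sufficiently high precision, with precision measured relative to a Jacobian ideal, lies near a true solution. To handle non-smooth loci I would stratify by the singular locus, again by noetherian induction, so that on each stratum the Jacobian criterion applies with a uniform exponent. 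Such a true solution $z$ gives a point $x\in X(K)$ with $\pi(x)=y$, which proves closedness.
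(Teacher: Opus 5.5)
Your main route, citing \cite[Theorem~1.3]{MB}, is the paper's route. However, the paper's proof consists precisely of verifying that theorem's hypotheses, and you verify only one of them. Moret-Bailly requires the valuation ring $K^\circ$ to be henselian \emph{and} the fraction field $\widehat K$ of its completion to be separable over $K$. You note henselianity in passing (complete of rank one), but you never mention the separability condition. Here it is automatic, because $K^\circ$ is already complete and so $\widehat K=K$. With that line added, the citation is justified and nothing more is needed.

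Your optional self-contained sketch has a genuine gap at its last step, and that step is exactly what the separability hypothesis is for. Your stratification by the Jacobian singular locus uses only henselianity. Over an imperfect field of characteristic $p$, however (for instance $\mathbb F_p((t))$, which the statement allows), a reduced, even integral, $K$-scheme can be nowhere smooth. For the fibre $z^p=y$ with $y\notin K^p$, the Jacobian vanishes identically. The ``singular stratum'' is then the whole fibre, so the induction never reaches a stratum where the Newton--Tougeron lemma applies.

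This is not a technicality. Take a henselian rank-one field with $\widehat K/K$ inseparable. Then there is $u\in\widehat K\setminus K$ with $u^p\in K$, and the image $K^p$ of the finite, hence proper, map $\mathbb A^1_K\to\mathbb A^1_K$, $z\mapsto z^p$, is not closed: $u^p$ is a limit of $p$th powers from $K$ but is not a $p$th power in $K$. An argument that uses only henselianity and Jacobian stratification would therefore also prove this false statement. You need an additional input that uses $\widehat K=K$ to get past inseparable strata, and that is the role of Moret-Bailly's separability hypothesis.

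The sketch is also not really self-contained, since its final step is Moret-Bailly's approximation theorem itself. Your Chow's-lemma reduction and your passage from $y_n\to y$ to approximate solutions $z_n$ are fine.
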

\begin{proof}
In \cite[Theorem~1.3]{MB}, the valuation ring $R=K^\circ$ must be Henselian
and the fraction field $\widehat K$ of its completion separable over $K$.
Our valuation has rank one, so completeness implies Henselianity
\cite[Theorems~1.3.1 and 4.1.3(5)]{EnglerPrestel}. Since $R$ is a closed, complete
subring of $K$, we also have $\widehat K=K$.
\end{proof}

Fix the vertex spaces of $E$, put $d_i=\dim E_i$, and define
\[
 \mathscr R_E=\prod_{a\in Q_1}
 \operatorname{Hom}_K(E_{s(a)},E_{t(a)}).
\]
For a dimension vector $e$ with $0\leq e_i\leq d_i$, put
$\theta(e)=\sum_i\theta_i e_i$ and form the incidence scheme
\begin{equation}\label{eq:quiverincidencescheme}
 Z_e=\left\{(\tau,U)\in\mathscr R_E\times
 \prod_i\operatorname{Gr}_K(e_i,E_i):
 \tau(a)U_{s(a)}\subseteq U_{t(a)}\text{ for every }a\in Q_1\right\}.
\end{equation}
Here $\operatorname{Gr}_K(e_i,E_i)$ is the Grassmannian of
$e_i$-dimensional subspaces of $E_i$.
Regard the $E_i$ as trivial bundles on the ambient product and write
$\mathcal U_i$ for the universal subbundles. For each arrow $a$, the
condition is the vanishing of the composite
\[
 \mathcal U_{s(a)}\longrightarrow E_{s(a)}
 \xrightarrow{\tau(a)}E_{t(a)}
 \longrightarrow E_{t(a)}/\mathcal U_{t(a)}.
\]
These are closed conditions, so $Z_e$ is closed. Since the Grassmannians
are projective, its projection $\pi_e:Z_e\to\mathscr R_E$ is proper.
The image $\pi_e(Z_e(K))$ consists exactly of arrow families admitting an
$e$-dimensional subrepresentation. It is closed by
\namedref{Theorem}{thm:MBclosedimage}; for the trivial valuation,
$\mathscr R_E(K)$ is discrete and closedness holds directly.

Given vertexwise bases, a \emph{coordinate cut} is a family
$(U_i)_{i\in Q_0}$ in which each $U_i$ is spanned by a subset of the
chosen basis of $E_i$. Its dimension vector is $(\dim U_i)_i$.
An arrow coefficient from a selected source basis vector to an unselected
target basis vector is called \emph{forbidden}. The family $(U_i)_i$
defines a subrepresentation if and only if all forbidden arrow
coefficients are zero.

Invariance under the arrows is then tested by the forbidden coefficients.
This means that $U=(U_i)_i$ is a subrepresentation exactly when every arrow
sends each selected source basis vector into the span of the selected
target basis vectors. In matrix terms, all entries from selected source
vectors to unselected target vectors must be zero.

For example, suppose $U_s=Ke_1$, $U_t=Kf_1$, and
\[
 \rho(a)e_1=\alpha f_1+\beta f_2.
\]
Here $e_1$ is a source basis vector and $f_1,f_2$ are target basis vectors.
This arrow sends $U_s$ into $U_t$ exactly when $\beta=0$. The coefficient
$\beta$ is ``forbidden'' because it sends a selected source vector toward
an unselected target vector.

\begin{lemma}\label{lem:uniformforbiddenblock}
Fix a product lattice norm $h^0$ and a finite set $\mathscr B$ of dimension
vectors $0\leq e_i\leq d_i$ for which $E$ has no subrepresentation of
dimension vector $e$. There is
$\varepsilon\in(0,1]$ such that, in any vertexwise $h^0$-orthonormal bases,
every coordinate cut of dimension vector in $\mathscr B$ has a forbidden
arrow coefficient of absolute value at least $\varepsilon$.
\end{lemma}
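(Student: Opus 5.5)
Fix one $h^0$-orthonormal reference basis at each vertex and change coordinates by the group
\[
\mathbf G^\circ=\prod_i\operatorname{GL}_{d_i}(K^\circ)
\]
of $h^0$-isometries.  Every family of vertexwise $h^0$-orthonormal bases is the image of the reference family under some $g\in\mathbf G^\circ$; this uses that a matrix between two orthonormal bases of a lattice norm and its inverse both have entries in $K^\circ$, by \fulleqref{eq:weightedmatrixnorm} with zero weights.  A coordinate cut of dimension vector $e$ in the new bases is a coordinate cut in the reference bases for the transformed arrow family $g^{-1}\cdot\rho=(g_{t(a)}^{-1}\rho(a)g_{s(a)})_a$.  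There are finitely many coordinate patterns of each dimension vector, and $\mathscr B$ is finite, so there are finitely many pairs $(e,P)$ of a dimension vector in $\mathscr B$ and a coordinate pattern $P$.  For each pair, let $\phi_{e,P}(\tau)$ be the maximum absolute value of the forbidden coefficients of $\tau$ in the reference coordinates.  The claim is then
\[
\inf_{g\in\mathbf G^\circ}\min_{(e,P)}\phi_{e,P}(g^{-1}\cdot\rho)\;\geq\;\varepsilon>0.
\]

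The plan is to argue by contradiction using compactness-free closedness, since $K^\circ$ need not be compact.  Suppose there are $g_n\in\mathbf G^\circ$ and a fixed pair $(e,P)$ (after passing to a subsequence) with $\phi_{e,P}(g_n^{-1}\cdot\rho)\to0$.  Set $\tau_n=g_n^{-1}\cdot\rho$ and let $\tau_n'$ be $\tau_n$ with the forbidden coefficients set to zero.  Then $\tau_n'$ preserves the coordinate cut $P$, so $\tau_n'\in\pi_e(Z_e(K))$.  Transport back by $g_n$: the family $\rho_n:=g_n\cdot\tau_n'$ lies in $\pi_e(Z_e(K))$, because this image is $\mathbf G_E(K)$-stable (the conjugate of a subrepresentation is a subrepresentation).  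The conjugation by $g_n\in\mathbf G^\circ$ is an isometry for the sup norm on arrow coefficients in the reference coordinates, since its entries and those of its inverse lie in $K^\circ$.  Hence
\[
\|\rho_n-\rho\|=\|\tau_n'-\tau_n\|=\phi_{e,P}(\tau_n)\to0 .
\]
So $\rho$ is a limit of points of $\pi_e(Z_e(K))$.  By closedness (\namedref{Theorem}{thm:MBclosedimage} in the nontrivially valued case, and discreteness in the trivially valued case), $\rho\in\pi_e(Z_e(K))$, i.e.\ $E$ has a subrepresentation of dimension vector $e$.  This contradicts $e\in\mathscr B$.

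Therefore each pair $(e,P)$ yields a positive infimum $\varepsilon_{e,P}$.  Take $\varepsilon=\min\{1,\min_{(e,P)}\varepsilon_{e,P}\}$, a finite minimum.  A direct version of the same argument also works: if $\delta$ is the positive distance from $\rho$ to the closed set $\pi_e(Z_e(K))$, then $\phi_{e,P}(g^{-1}\cdot\rho)\geq\delta$ for every $g$, and $\varepsilon=\min(1,\min_e\delta_e)$ without any sequences.  Here ``distance'' uses the sup of absolute values of matrix entries in the reference basis, which induces the valuation topology on $\mathscr R_E(K)$.

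The main point to check carefully is the isometry claim.  One must confirm that conjugation by $\mathbf G^\circ$ preserves the max-entry norm on each $\operatorname{Hom}_K(E_{s(a)},E_{t(a)})$; this follows from the ultrametric inequality applied to products of matrices with entries in $K^\circ$, in both directions.  One must also confirm that the reduction from ``any orthonormal bases'' to the $\mathbf G^\circ$-orbit is exact.  The degenerate cases are harmless: if $\mathscr B$ is empty take $\varepsilon=1$, and for the trivial valuation all sets in question are discrete.
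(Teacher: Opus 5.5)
Your proof is correct and is essentially the paper's argument. Closedness of the finitely many incidence images (Moret-Bailly, or discreteness for the trivial valuation) gives an excluded ball around $\rho$. Deleting the forbidden entries of a cut then produces a point of $\pi_e(Z_e(K))$ whose distance from $\rho$ equals the largest forbidden coefficient, because orthonormal coordinate changes are isometries. Your detour through a reference basis and the $\mathbf G_E(K)$-stability of the image is harmless but unnecessary. The paper instead measures $\|\rho-\rho'\|_0$ directly in the given orthonormal bases, using the basis-independent operator norm, exactly as in your ``direct version''.
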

\begin{proof}
Give $\mathscr R_E(K)$ the valuation metric defined by
\[
 \|\tau\|_0=\max_{a\in Q_1}
 \|\tau(a)\|_{h^0_{s(a)}\to h^0_{t(a)}}.
\]
Each closed set $\pi_e(Z_e(K))$, $e\in\mathscr B$, omits $\rho$. Their
finite union therefore avoids an open $\varepsilon$-ball about $\rho$,
for some $\varepsilon\in(0,1]$. If $\mathscr B$ is empty, take
$\varepsilon=1$; a zero-dimensional $\mathscr R_E(K)$ is a point.

If every forbidden coefficient for a coordinate cut $U$ of dimension
$e\in\mathscr B$ had absolute value less than $\varepsilon$, delete those
entries to obtain $\rho'$. Then $U$ is invariant for $\rho'$. All
orthonormal coordinate changes and their inverses are isometries, so
\[
 \|\rho-\rho'\|_0
 =\max_a\max_{\text{forbidden }q,p}|c^a_{qp}|<\varepsilon.
\]
Here $c^a_{qp}$ are the original matrix entries and empty maxima are zero.
This contradicts $\rho'\in\pi_e(Z_e(K))$.
\end{proof}

\subsection{The uniform coercivity theorem}

Put $N_E=\sum_i\dim E_i$. Taking infima over simultaneous homotheties in
the product version of \fulleqref{eq:d2dinftycomparison} gives, also on
the completed quotient,
\[
 \bar d_\infty([h],[k])
 \leq\bar d_2([h],[k])
 \leq\sqrt{N_E}\,\bar d_\infty([h],[k]).
\]
Thus the quotient metrics have the same bounded sets.

\begin{theorem}\label{thm:universalcoercivity}
Fix a finite quiver $Q$, a real weight $\theta=(\theta_i)_{i\in Q_0}$,
a weak ramp $f_a$ for each $a\in Q_1$, and a complete non-Archimedean
field $K$, possibly trivially valued. Let $E=((E_i)_i,\rho)$ be a
finite-dimensional $\theta$-stable representation. Fix nonzero determinant
vectors $\omega_i\in\det E_i$ and a product lattice norm $h^0$. Let $S_E$ be
the HKKP potential in
\fulleqref{eq:HKKPpotential}, extended to $\Nall(E)$ by
\namedref{Lemma}{lem:allnormprofile}. Since $\theta(E)=0$, it descends to
$\overline{\Nall}(E)$.

There exist constants $c>0$ and $C_0\in\R$ such that
\begin{equation}\label{eq:uniformlinearcoercivity}
 S_E(h)\geq c\,\bar d_\infty([h],[h^0])-C_0
 \qquad\text{for every }h\in\Nall(E).
\end{equation}
Consequently, every sublevel of $S_E$ on $\overline{\Nall}(E)$ is bounded
for $\bar d_\infty$, equivalently for $\bar d_2$. The same holds for its
intersection with $\overline{\Ndiag}(E)$.
\end{theorem}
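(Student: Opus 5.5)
The plan is to prove \fulleqref{eq:uniformlinearcoercivity} first for split $h$ and then pass to all of $\Nall(E)$ by density. For the density step, \namedref{Lemma}{lem:allnormprofile} makes $S_E$ continuous for $d_\infty$, since the profiles are Lipschitz and determinant norms are continuous. The map $h\mapsto\bar d_\infty([h],[h^0])$ is $1$-Lipschitz, and $\Ndiag(E)$ is $d_\infty$-dense. So the inequality for split $h$ gives it for all $h$ with the same constants. Boundedness of sublevels for $\bar d_\infty$ then follows at once, and for $\bar d_2$ by the displayed comparison of quotient metrics.

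\emph{Step 1: coordinates.} Fix split $h$. At each vertex, $h_i$ and $h^0_i$ lie in a common apartment $\mathcal A_{\mathbf e_i}$ by \cite[Proposition~1.14]{BE}. Because $h^0_i$ has an orthonormal basis, its values lie in $|K|$. We can therefore rescale each $e_{ij}$ by a scalar of $K^\times$ so that $\mathbf e_i$ becomes $h^0_i$-orthonormal; this does not change the apartment. This also covers the trivially valued case. Write $h_i=h_{\mathbf e_i,\mathbf w_i}$. Then $\ell_{E_i}(h_i)=\sum_jw_{ij}+\text{const}$, where the constant depends only on $\omega_i$ and $h^0$. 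List all $N_E$ weights $w_{ij}$ in increasing order, let the distinct values be $\mu_0<\cdots<\mu_m$, and put $g_k=\mu_k-\mu_{k-1}>0$. Let $L_k$ be the coordinate cut spanned by the basis vectors of weight $\le\mu_{k-1}$. Abel summation together with $\theta(E)=0$ gives
\[
 -\sum_i\theta_i\sum_jw_{ij}=\sum_{k=1}^m g_k\,\theta(L_k),
\]
and $\sum_kg_k=\mu_m-\mu_0=2\,\bar d_\infty([h],[h^0])$ by the common-apartment formula.

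\emph{Step 2: good and bad gaps.} Let $\mathscr B$ be the finite set of dimension vectors $e\ne0,d$ with $\theta(e)\le0$. By $\theta$-stability, $E$ has no subrepresentation with dimension vector in $\mathscr B$. Let $\delta>0$ be the minimum of $\theta(e)$ over the remaining positive values, and let $\Theta=\max_e|\theta(e)|$. A gap $k$ is \emph{good} if $\theta(L_k)>0$; it then contributes at least $\delta g_k$. Otherwise $\dim L_k\in\mathscr B$, and \namedref{Lemma}{lem:uniformforbiddenblock} gives an arrow $a$ and a forbidden coefficient $c$ with $|c|\ge\varepsilon$. This coefficient runs from a selected source vector, of weight $\le\mu_{k-1}$, to an unselected target vector, of weight $\ge\mu_k$. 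By \fulleqref{eq:weightedmatrixnorm},
\[
 \sigma_{a,1}(h)=s_1=\ln\|\rho(a)\|\ \ge\ \ln\varepsilon+g_k .
\]
Hence $\max_a\sigma_{a,1}\ge\ln\varepsilon+\max_{\mathrm{bad}}g_k\ge\ln\varepsilon+N_E^{-1}\sum_{\mathrm{bad}}g_k$.

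\emph{Step 3: absorbing bad gaps with the ramps.} Each $f_a$ is increasing and continuous on $[-\infty,\infty)$, so $f_a\ge f_a(-\infty)>-\infty$. Since $f_a'\to\infty$, for any $M$ there is $C_M$ with $f_a(\lambda)\ge M\lambda-C_M$ for all $\lambda$. Apply this to the arrow realizing $\max_a\sigma_{a,1}$, and bound all its other terms and all other arrows below by $f_a(-\infty)$. Choose $M=N_E(\Theta+1)$. Then the arrow part is at least $(\Theta+1)\sum_{\mathrm{bad}}g_k-C$, while the bad determinant terms are at least $-\Theta\sum_{\mathrm{bad}}g_k$. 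Combining everything,
\[
 S_E(h)\ge\min(\delta,1)\sum_kg_k-C_0=2\min(\delta,1)\,\bar d_\infty([h],[h^0])-C_0 .
\]
If there are no bad gaps, the term $\max_a\sigma_{a,1}$ need not be used and the same bound holds. The constants are independent of $h$ because $\varepsilon$ is uniform over all $h^0$-orthonormal bases.

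The main obstacle is getting the sign and direction conventions right. Cuts must be lower-weight sets, so that a forbidden entry goes from low to high weight and gives a large singular value, while the Abel summation of the $\theta$-terms must come out with $+\theta(L_k)$. I would check the case of a single arrow between two one-dimensional vertices first. The remaining subtlety is ensuring that the common apartment basis can be taken $h^0$-orthonormal, which is handled in Step 1.
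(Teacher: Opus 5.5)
Your proposal is correct and follows essentially the same route as the paper. Both arguments use $h^0$-orthonormal common-apartment coordinates, Abel summation of the determinant term over sorted weight cuts using $\theta(E)=0$, and \namedref{Lemma}{lem:uniformforbiddenblock} to turn each nonpositive-degree gap into a lower bound on some $\sigma_{a,1}$. Both then absorb those gaps through the superlinear growth of the ramps and finish by density of split norms.

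The differences are cosmetic. You average over the bad gaps and use $f_a(\lambda)\geq M\lambda-C_M$, where the paper bounds each bad gap by $H+B$ and uses its auxiliary function $g$ and constant $m_A$. You should also set $\delta=1$ when no cut has positive degree, as the paper does.
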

\begin{proof}
Put $d_i=\dim E_i$ and $N=N_E$. If $N\leq1$, the
simultaneous-homothety quotient is a point, and $S_E$ is constant, so
\fulleqref{eq:uniformlinearcoercivity} follows immediately. Assume $N>1$.
At least one arrow is nonzero: otherwise every line at every nonzero
vertex would be a proper subrepresentation, so stability would force
$\theta_i>0$ at all nonzero vertices, contrary to $\theta(E)=0$.

Let
\[
 \mathscr D=\left\{e\in\mathbb Z^{Q_0}:
 0\leq e_i\leq d_i\text{ for every }i,\quad
 0<\sum_i e_i<N\right\},\qquad
 \mathscr B=\{e\in\mathscr D:\theta(e)\leq0\}.
\]
Stability excludes all vectors in $\mathscr B$ as subrepresentations.
Obtain $\varepsilon\in(0,1]$ from
\namedref{Lemma}{lem:uniformforbiddenblock}.

For split $h$, choose common vertexwise orthogonal bases with $h^0$.
Since $h^0$ takes nonzero values in $|K^\times|$, scalar rescaling makes
these bases $h^0$-orthonormal. Write the relative logarithmic weights
as $w_{i,p}$, and simultaneously rescale $h$ to make their minimum zero;
this leaves $S_E$ unchanged. Sort the weights globally:
\[
 0=w_1\leq\cdots\leq w_N=w_{\max},\qquad
 \Delta_k=w_{k+1}-w_k\quad(1\leq k<N).
\]
Let $e^{(k)}\in\mathscr D$ count the first $k$ vectors, ordering ties
arbitrarily. Then $w_{\max}=2\bar d_\infty([h],[h^0])$.

For a nonzero arrow with matrix entries $c^a_{qp}$,
\fulleqref{eq:weightedmatrixnorm} gives
\begin{equation}\label{eq:coercivityoperatorformula}
 \sigma_{a,1}(h)
 =\max_{c^a_{qp}\ne0}
 \bigl(\ln|c^a_{qp}|+w_{t(a),q}-w_{s(a),p}\bigr).
\end{equation}
Set
\[
 H=\max_{\rho(a)\ne0}(\sigma_{a,1}(h))_+,
 \qquad B=-\ln\varepsilon\geq0,
\]
where $(x)_+:=\max\{x,0\}$ is the positive part of $x$.
If $\theta(e^{(k)})\leq0$,
\namedref{Lemma}{lem:uniformforbiddenblock} supplies a forbidden
coefficient of size at least $\varepsilon$. Its source weight is at
most $w_k$ and its target weight at least $w_{k+1}$. Thus
\begin{equation}\label{eq:badcutgapbound}
 \Delta_k\leq\sigma_{a,1}(h)+B\leq H+B
 \qquad\text{whenever }\theta(e^{(k)})\leq0.
\end{equation}

Choose constants, with all extrema over integer dimension vectors,
\[
 \begin{aligned}
 \delta&=\min\bigl(\{1\}\cup
       \{\theta(e):e\in\mathscr D,\ \theta(e)>0\}\bigr)>0,\\
 \beta&=\max_{0\leq e_i\leq d_i}|\theta(e)|,
 \qquad A=(\delta+\beta)(N-1).
 \end{aligned}
\]
Here $\delta=1$ if no vector in $\mathscr D$ has positive degree.
Since $\theta(E)=0$,
\[
 -\sum_{i,p}\theta_i w_{i,p}
 =\sum_{k=1}^{N-1}\theta(e^{(k)})\Delta_k.
\]
Positive-degree cuts contribute at least $\delta\Delta_k$ and the at most
$N-1$ remaining cuts at least $-\beta\Delta_k$. By
\fulleqref{eq:badcutgapbound},
\begin{equation}\label{eq:coercivitydeterminantbound}
 -\sum_{i,p}\theta_i w_{i,p}
 \geq\delta w_{\max}-(\delta+\beta)
       \sum_{\theta(e^{(k)})\leq0}\Delta_k
 \geq\delta w_{\max}-A(H+B).
\end{equation}

Put $r_a=\rk\rho(a)$ and $L=\sum_a r_af_a(-\infty)$. Define a function
$g:\R_{\geq0}\to\R_{\geq0}$ by
\[
 g(0)=0,\qquad
 g(x)=\min_{\rho(a)\ne0}
       \bigl(f_a(x)-f_a(-\infty)\bigr)\quad(x>0).
\]
The arrow contribution is at least $L+g(H)$: when $H>0$, some largest
singular value equals $H$ and all other terms have their usual lower
bounds; when $H=0$, use $L$.

For every $M>0$, the condition $f_a'(x)\to\infty$ gives a number $x_0$
such that $f_a(x)\geq f_a(x_0)+M(x-x_0)$ for $x\geq x_0$.
The finite set of nonzero arrows allows a common $x_0$, so
$g(x)/x\to\infty$. Since $g\geq0$, also near zero,
\[
 m_A:=\inf_{x\geq0}\bigl(g(x)-Ax\bigr)>-\infty.
\]

With $C_{\det}=-\sum_i\theta_i\ell_{E_i}(h_i^0)$,
the determinant contribution is $C_{\det}-\sum_{i,p}\theta_i w_{i,p}$.
Together with \fulleqref{eq:coercivitydeterminantbound}, this gives
\[
 S_E(h)\geq\delta w_{\max}+C_{\det}+L-AB+m_A.
\]
Thus \fulleqref{eq:uniformlinearcoercivity} holds on the split locus with
$c=2\delta$ and $C_0=AB-m_A-C_{\det}-L$.

For arbitrary $h$, take split $h^{(n)}\to h$ in $d_2$. Then
\[
 \left|\bar d_\infty([h^{(n)}],[h^0])
       -\bar d_\infty([h],[h^0])\right|
 \leq d_\infty(h^{(n)},h)\longrightarrow0.
\]
Continuity of $S_E$ extends the bound to all norms. The quotient-metric
comparison gives bounded sublevels.
\end{proof}

\begin{remark}\label{rem:uniformsublevels}
For $N_E>0$, \fulleqref{eq:uniformlinearcoercivity} also gives
\[
 S_E(h)\geq\frac{c}{\sqrt{N_E}}\,
 \bar d_2([h],[h^0])-C_0.
\]
Thus every level $C\in\R$ has a single radius $R_C\geq0$ beyond which
$S_E(h)>C$. The finitely many closed incidence images give a common
excluded neighborhood, bounding a forbidden coefficient below in every
coordinate cut of nonpositive degree, uniformly over all orthonormal bases.
\end{remark}

\subsection{Variational harmonicity}

Throughout this subsection, $E=(E_i,\rho_E(a))$ and $F=(F_i,\rho_F(a))$
denote finite-dimensional representations of the same finite quiver $Q$ over
the complete non-Archimedean field $K$. Their arrow maps are
\[
 \rho_E(a):E_{s(a)}\longrightarrow E_{t(a)},\qquad
 \rho_F(a):F_{s(a)}\longrightarrow F_{t(a)},\qquad a\in Q_1.
\]
The vertex weights $\theta_i$ and arrow ramps $f_a$ are fixed and shared by
the potentials $S_E$, $S_F$, and $S_{E\oplus F}$. Choose nonzero determinant
vectors in every $\det E_i$ and $\det F_i$. The direct sum is formed
vertexwise, with spaces $E_i\oplus F_i$ and arrows
$\rho_E(a)\oplus\rho_F(a)$; its determinant vector is the wedge of the two
chosen vectors. Any other choice changes the corresponding potential by an
additive constant. We write $h=(h_i)\in\Nall(E)$ and
$k=(k_i)\in\Nall(F)$ for the norms on $E$ and $F$, respectively.

\begin{lemma}\label{lem:quotientcompletion}
Choose a nonzero vector $\omega_i\in\det E_i$ at each vertex, taking
$\omega_i=1$ when $E_i=\{0\}$, and put
\[
 N_E=\sum_i\dim E_i,\qquad
 \Delta_E(h)=\sum_i\ell_{E_i}(h_i),\qquad
 \mathscr S_E=\{h\in\Nall(E)\mid\Delta_E(h)=0\}.
\]
If $N_E=0$, the norm spaces and their quotients are singletons.  If
$N_E>0$, every simultaneous-homothety orbit meets $\mathscr S_E$ exactly
once, and the resulting bijection
\[
 \mathscr S_E\longrightarrow\overline{\Nall}(E)
\]
is an isometry for $d_2$ and the quotient metric $\bar d_2$.
The slice $\mathscr S_E$ is closed, convex, and complete; its split points
are dense.  Consequently $(\overline{\Nall}(E),\bar d_2)$ is the metric
completion of $(\overline{\Ndiag}(E),\bar d_2)$, and a set of quotient
classes is $\bar d_2$-bounded if and only if its representatives in
$\mathscr S_E$ are $d_2$-bounded.
\end{lemma}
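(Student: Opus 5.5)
The plan is to build everything from the additive scaling behavior of $\Delta_E$ and the Euclidean structure of the product metric along scaling orbits. For $c>0$ and $h\in\Nall(E)$, each vertex determinant norm scales by $c^{\dim E_i}$, so
\[
 \Delta_E(ch)=\Delta_E(h)+N_E\ln c .
\]
When $N_E=0$ every factor is the one-point space of the Remark on $V=\{0\}$, so all the spaces are singletons. When $N_E>0$, the map $c\mapsto\Delta_E(ch)$ is a bijection $\R_{>0}\to\R$, and each orbit meets $\mathscr S_E$ in exactly one point, namely $h\cdot e^{-\Delta_E(h)/N_E}$. Closedness of $\mathscr S_E$ follows from continuity of determinant norms, which gives continuity of $\Delta_E$ as in \namedref{Lemma}{lem:allnormprofile}. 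Convexity follows because each $\ell_{E_i}$ is affine along canonical geodesics by \namedref{Lemma}{lem:allnormconvexity}, so $\Delta_E$ is affine and its zero set is geodesically convex. Completeness then follows from closedness in the complete space of \namedref{Proposition}{prop:productcompletion}.

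The central step is the isometry claim. I will first work on the split locus. Given split $h,k$, choose a common product apartment with logarithmic coordinate differences $\lambda_{i,p}$. Scaling $k$ by $c$ shifts every $\lambda_{i,p}$ by $t=\ln c$, so
\[
 d_2(h,ck)^2=\sum_{i,p}(\lambda_{i,p}+t)^2 ,
\]
which is minimized at $t=-\tfrac{1}{N_E}\sum\lambda_{i,p}$. In these coordinates $\sum_{i,p}\lambda_{i,p}=\Delta_E(k)-\Delta_E(h)$, since the determinant logarithm is the sum of the weights. So if $h,k\in\mathscr S_E$, the minimizing shift is $t=0$, and $\bar d_2([h],[k])=d_2(h,k)$. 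I must also check that $\bar d_2$ on the quotient involves only these representatives: $\inf_{c,c'}d_2(c'h,ck)=\inf_c d_2(h,c k/c')$, which holds because scaling is a $d_2$-isometry. For general norms I will pass to the limit. The function $(h,k,t)\mapsto d_2(h,e^tk)$ is continuous, and the identity $d_2(h,e^tk)^2=d_2(h,k)^2+N_Et^2$ for $h,k\in\mathscr S_E$ (up to the cross term, which vanishes) extends to $\Nall(E)$ by density. The density needed here is that split points are dense in $\mathscr S_E$: approximate $h\in\mathscr S_E$ by split $h^{(n)}\to h$ and renormalize by $e^{-\Delta_E(h^{(n)})/N_E}$. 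Renormalization preserves splitness, and the scalar tends to $1$ by continuity of $\Delta_E$.

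The remaining claims follow formally. The isometric bijection $\mathscr S_E\cong\overline{\Nall}(E)$ makes the quotient complete. It carries the dense split part of the slice onto $\overline{\Ndiag}(E)$, so the quotient is the completion of $\overline{\Ndiag}(E)$. The boundedness statement is immediate from the isometry. The main obstacle I expect is justifying the Pythagorean identity $d_2(h,e^tk)^2=d_2(h,k)^2+N_Et^2$ off the split locus. My first approach is the density argument above. As a fallback, I can avoid the identity altogether: $\bar d_2([h],[k])=\inf_c d_2(h,ck)$ is the limit of the split infima, since the split estimate is uniform. The infimum is attained at the $c$ given by the determinant difference, and that $c$ depends continuously on the data.
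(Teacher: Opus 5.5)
Your proposal is correct and follows essentially the same route as the paper: the scaling identity $\Delta_E(ch)=\Delta_E(h)+N_E\ln c$ gives the unique orbit representative, and affineness and continuity of $\Delta_E$ make $\mathscr S_E$ closed, convex and complete. The Pythagorean identity $d_2(h,ck)^2=d_2(h,k)^2+N_E(\ln c)^2$ is proved on split points of the slice and extended by renormalized split approximation, after which the completion and boundedness claims follow formally from the isometry; the fallback you mention is not needed.
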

\begin{proof}
By \namedref{Proposition}{prop:productcompletion}, $\Nall(E)$ is the
complete CAT(0) metric completion of $\Ndiag(E)$.
The assertion for $N_E=0$ follows from our zero-space
conventions, so assume $N_E>0$.  Since
\[
 \Delta_E(ch)=\Delta_E(h)+N_E\ln c \qquad\text{for all }c>0,
\]
every orbit meets $\mathscr S_E$ in exactly one point.  The function
$\Delta_E$ is continuous and affine on canonical geodesics: in a common
product apartment it is the sum of the logarithmic coordinates plus a
determinant-vector-dependent constant, and the general case follows by split
density and endpoint continuity.
Consequently $\mathscr S_E$ is closed and convex, and hence complete.

If $h,k\in\mathscr S_E$ are split, their coordinate differences have sum
zero, so for every $c>0$,
\[
 d_2(h,ck)^2=d_2(h,k)^2+N_E(\ln c)^2.
\]
For arbitrary $h,k\in\mathscr S_E$, choose split approximants
$h^{(n)}\to h$ and $k^{(n)}\to k$, and normalize them by
\[
 \widehat h^{(n)}
 =e^{-\Delta_E(h^{(n)})/N_E}h^{(n)},\qquad
 \widehat k^{(n)}
 =e^{-\Delta_E(k^{(n)})/N_E}k^{(n)}.
\]
Continuity of $\Delta_E$ shows that the normalized split norms still
converge to $h,k$; in particular, split points are dense in
$\mathscr S_E$.  For each fixed $c>0$, pass to the limit in the preceding
identity to obtain it for $h,k$.  Taking the infimum over $c$ proves that
$\mathscr S_E\to\overline{\Nall}(E)$ is an isometry.  The density and
completeness assertions then show that this quotient is the completion of
the quotient split locus.  The final boundedness equivalence follows from
the same isometry.
\end{proof}

When $\theta(E)=0$, the potential is constant on simultaneous-homothety
orbits. Hence minimizing $S_E$ on $\Nall(E)$ is equivalent to minimizing
its restriction to $\mathscr S_E$.

\begin{definition}\label{def:variationalharmonic}
A norm $h\in\Nall(E)$ is \emph{variationally harmonic} if it globally
minimizes the continuous extension of $S_E$. Equivalently, for every
canonical $d_2$-geodesic starting at $h$, the right derivative of $S_E$ at
its initial point is nonnegative. Here right derivatives are understood
as extended one-sided limits in $[-\infty,+\infty]$.
\end{definition}

The equivalence follows at once from convexity: if $S_E(y)<S_E(h)$, the
geodesic from $h$ to $y$ has strictly negative right derivative at its
initial point. At a split norm,
\namedref{Definition}{def:variationalharmonic} agrees with HKKP harmonicity,
as will be proved independently of
\namedref{Theorem}{thm:universalvariational} in part~(i) of
\namedref{Corollary}{cor:splitcriterionallfields}.

\begin{lemma}\label{lem:variationaldirectsum}
If $h$ and $k$ minimize $S_E$ and $S_F$, respectively, then the vertexwise
orthogonal max norm $h\oplus k$, defined by
\[
 (h\oplus k)_i(v,w)=\max\{h_i(v),k_i(w)\},
 \qquad v\in E_i,\quad w\in F_i,
\]
minimizes $S_{E\oplus F}$.
\end{lemma}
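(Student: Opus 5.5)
The plan is to show that the potential splits additively, $S_{E\oplus F}(h\oplus k)=S_E(h)+S_F(k)$, and that for an arbitrary $g\in\Nall(E\oplus F)$ we have $S_{E\oplus F}(g)\geq S_E(h')+S_F(k')$ for suitable norms $h'\in\Nall(E)$ and $k'\in\Nall(F)$. Since $h$ and $k$ are minimizers, the right-hand side is at least $S_E(h)+S_F(k)=S_{E\oplus F}(h\oplus k)$, which proves the lemma. I would not try a projection-type inequality directly. Instead I would use convexity through \namedref{Definition}{def:variationalharmonic}: it suffices to show that every canonical geodesic starting at $h\oplus k$ has nonnegative right derivative. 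Alternatively, and this is what I would try first, I would use an averaging argument with the torus action.

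\emph{Additivity.} For split $h,k$, orthogonal bases of $h_i$ and $k_i$ together give an orthogonal basis of $(h\oplus k)_i$. The arrow $\rho_E(a)\oplus\rho_F(a)$ is block diagonal, so by \namedref{Lemma}{lem:monomialcertificate} applied to each block, its singular-value list is the union of the two lists, and the $f_a$-sums add. Determinant norms multiply, because the wedge of orthogonal bases is orthogonal, so the $\ell$-terms add. The identity then extends to all norms by density and continuity (\namedref{Lemma}{lem:allnormprofile}), since $(h,k)\mapsto h\oplus k$ is $d_\infty$-continuous.

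\emph{Lower bound via the scaling path.} For $t\in\R$, let $\phi_t$ act on $E_i\oplus F_i$ by $1$ on $E_i$ and $e^{t}$ on $F_i$. This commutes with the arrows, so $S_{E\oplus F}(\phi_t\cdot g)=S_{E\oplus F}(g)$, because the determinant vector changes by a constant factor that is the same at all vertices. Here $\sum_i\theta_i\dim F_i$ need not vanish, so I must check that the constant shift is exactly $t\,\theta(F)$-type, and compensate by the matching scaling of $k$.

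The key claim is that as $t\to\pm\infty$, the normalized norms $\phi_t\cdot g$ (rescaled in the $F$-part) converge in the sense of asymptotic cones. The concrete substitute is this: let $g^E:=g|_E$ be the subspace norm, and let $g^F$ be the quotient norm on $(E\oplus F)/E\cong F$. I would prove
\[
 S_{E\oplus F}(g)\ \geq\ S_{E\oplus F}(g^E\oplus g^F)=S_E(g^E)+S_F(g^F).
\]
The proof goes through the geodesic from $g$ toward the degeneration $g^E\oplus g^F$ (the limit of $\phi_t\cdot g$ as $t\to-\infty$ after renormalization). Convexity plus invariance of $S$ along the $\phi_t$-orbit shows that $S$ is constant along this orbit, and continuity passes that constant to the limit $g^E\oplus g^F$, provided that limit exists in $\Nall$. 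For split $g$ this is checked in a basis adapted to the flag $E\subseteq E\oplus F$ (\namedref{Lemma}{lem:adapted}). By \fulleqref{eq:partialminorformula}, the minors of the block-diagonal arrow are unaffected by the off-diagonal part of the basis change, and the limit norm is the orthogonal sum. This actually gives equality $S(g)=S(g^E\oplus g^F)$ on the split locus. The general case follows by density, using that restriction and quotient are $d_\infty$-nonexpansive.

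Then $S_{E\oplus F}(g)=S_E(g^E)+S_F(g^F)\geq S_E(h)+S_F(k)$, which finishes the argument. The main obstacle is the split computation that the potential is unchanged when passing from $g$ to $g^E\oplus g^F$. In an adapted orthogonal basis, the $F$-basis vectors are $f_j+x_j$ with $x_j\in E$, and one must verify from \fulleqref{eq:partialminorformula} that the maximal weighted minors of the block-diagonal arrow matrix, rewritten in this basis, agree with those in the orthogonal-sum basis. If exact equality fails, I would instead aim for the inequality $\geq$, using monotonicity of $s_j$ under the degeneration.
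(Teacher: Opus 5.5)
Your skeleton matches the paper's in three places: additivity $S_{E\oplus F}(h\oplus k)=S_E(h)+S_F(k)$; a lower bound for arbitrary $g$ in terms of the restriction $g^E=g|_E$ and the quotient $g^F=g/E$; and passage from split to arbitrary $g$, using that restriction and quotient are $d_\infty$-non-expansive.

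The gap is the lower bound itself. The \emph{equality} $S_{E\oplus F}(g)=S_{E\oplus F}(g^E\oplus g^F)$ that you claim on the split locus is false. Take the quiver $s\to t$, let $E=F$ be $K\xrightarrow{\mathrm{id}}K$, and put $(\theta_s,\theta_t)=(-1,1)$ and $f(\sigma)=e^\sigma$. On $E\oplus F=(K^2\xrightarrow{\mathrm{id}}K^2)$, with $E=Ke_1$ and $F=Ke_2$, let $g_s$ be the standard norm. Let $g_t$ have orthogonal basis $e_1,\ e_2+xe_1$ with weights $1,1$, where $|x|=e$. Then $g^E\oplus g^F$ is standard at both vertices, with singular values $(0,0)$ and potential $2$. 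In the adapted bases, however, the arrow has upper-right entry $-x$. So \fulleqref{eq:partialminorformula} gives singular values $(1,-1)$ and $S_{E\oplus F}(g)=e+e^{-1}>2$: the off-diagonal block does enter the maximal minors.

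Your mechanism fails for the same reason. The map $\phi_c=\mathrm{id}_E\oplus c\,\mathrm{id}_F$, $c\in K^\times$, is an automorphism and preserves $S$ (here $\theta(F)=0$ is forced by minimality of $k$). But the orbit $\phi_c\cdot g$ diverges, with its complementary weights tending to $0$ or $\infty$. Reaching $g^E\oplus g^F$ needs an extra rescaling of the complement that is not an automorphism, and in the example that rescaling is exactly what lowers $S$ from $e+e^{-1}$ to $2$.

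Only the inequality $S_{E\oplus F}(g)\geq S_E(g^E)+S_F(g^F)$ holds, and proving it needs an idea your proposal lacks. Your fallback, monotonicity of the $s_j$, does not suffice by itself: in the example the second singular value rises from $-1$ to $0$, so the $\sigma_j$ cannot be compared termwise. The paper's argument runs as follows.
\begin{itemize}
\item In an orthogonal basis of split $g$ adapted to $E\subseteq E\oplus F$, each arrow $C=A\oplus B$ is block upper triangular, with diagonal blocks $A$ for $g|_E$ and $B$ for $g/E$.
\item Combining maximizing minors of the two blocks gives $s_{p+q}(C;g)\geq s_p(A;g|_E)+s_q(B;g/E)$.
\item Hence the union of the singular-value lists of $A$ and $B$, of total length $\rk C$, is weakly submajorized by the list of $C$.
\item The majorization inequality for increasing convex $f_a$ then gives the arrow inequality.
\item Multiplicativity of determinant norms in the restriction--quotient sequence matches the $\theta$-terms.
\end{itemize}

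A repair closer to your degeneration idea also works. Scale the adapted $E$-basis by $e^{-t}$. On this apartment ray, every nonzero minor has at least as many $E$-rows as $E$-columns, so its weight is nonincreasing in $t$. The minors whose weight stays constant factor through the diagonal blocks. Hence $S(g_t)\to S(g^E\oplus g^F)$, using $\theta(E)=0$ for the determinant terms. Since $S$ is convex along the ray and has a finite limit, it is nonincreasing, which gives $S(g)\geq S(g^E\oplus g^F)$.
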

\begin{proof}
Let $g=(g_i)\in\Nall(E\oplus F)$.  Write
$g|_E=(g_i|_{E_i})_i\in\Nall(E)$ for the restricted norm.  Under the
canonical identification $(E_i\oplus F_i)/E_i\simeq F_i$, write
$\bar g=g/E=(\bar g_i)_i\in\Nall(F)$ for the quotient norm on
$F\simeq(E\oplus F)/E$, where
$\bar g_i(f):=\inf_{e\in E_i}g_i(e,f)$ for $f\in F_i$.
In general $\bar g$ is not the restriction $g|_F$. Write
$A=\rho_E(a)$, $B=\rho_F(a)$, and $C=A\oplus B$ for a fixed arrow.

First suppose that the source and target norms of $g$ are split.
\namedref{Lemma}{lem:adapted} supplies orthogonal bases adapted to the relevant
copies of $E$, without changing the norms. In these bases the matrix of $C$
is block upper triangular, with diagonal blocks $A$ and $B$ on $E$ and
$F\simeq(E\oplus F)/E$, respectively. If
$r_A=\rk A$, $r_B=\rk B$, and $s_0=0$, combining maximizing minors in the
two diagonal blocks gives
\[
 s_{p+q}(C;g)\geq s_p(A;g|_E)+s_q(B;\bar g),
 \qquad 0\leq p\leq r_A,\quad0\leq q\leq r_B.
\]
For each $j$, the sum of the $j$ largest entries in the union of the
singular-value vectors of $A$ and $B$ is the maximum of the right-hand side
over $p+q=j$. Hence that union is weakly submajorized by the singular-value
vector of $C$. These vectors have the same length because
$\rk C=r_A+r_B$. Since $f_a$ is increasing and convex,
\cite[Chapter~3, C.1.b, p.~92]{MOA} yields, with the quotient norm
$\bar g$ on $F\simeq(E\oplus F)/E$,
\[
 \sum_jf_a(\sigma_j(C;g))\geq
 \sum_jf_a(\sigma_j(A;g|_E))
 +\sum_jf_a(\sigma_j(B;\bar g)).
\]

Now approximate $g$ itself by split norms $g^{(n)}$ and set
$\bar g^{(n)}:=g^{(n)}/E\in\Nall(F)$, using
$F\simeq(E\oplus F)/E$. For each
$g^{(n)}$, \namedref{Lemma}{lem:adapted} supplies the adapted bases used above.
Restriction and quotient are non-expansive for $d_\infty$: inequalities
$e^{-\delta}g\leq g'\leq e^\delta g$ remain true after restriction and
after taking infima over quotient cosets. The following estimates give
convergence in $\Nall(E)$ and $\Nall(F)$, respectively, where
$F\simeq(E\oplus F)/E$ in the quotient term:
\[
 d_\infty(g^{(n)}|_E,g|_E)
 \leq d_\infty(g^{(n)},g)\longrightarrow0,
 \qquad
 d_\infty(\bar g^{(n)},\bar g)
 \leq d_\infty(g^{(n)},g)\longrightarrow0.
\]
By \namedref{Lemma}{lem:allnormprofile}, the arrow inequality therefore passes
to arbitrary $g$. Determinant norms are multiplicative in the restriction--quotient
sequence \cite[Corollary~2.10]{BE}; consequently, with $\bar g$ the quotient
norm on $F\simeq(E\oplus F)/E$,
\[
 S_{E\oplus F}(g)\geq S_E(g|_E)+S_F(\bar g).
\]

Finally choose split $h^{(n)}\to h$ and $k^{(n)}\to k$. For
$h^{(n)}\oplus k^{(n)}$, every arrow is block diagonal between orthogonal
sums, so its singular-value multiset is the sorted union of the two summand
multisets and
\[
 S_{E\oplus F}(h^{(n)}\oplus k^{(n)})
 =S_E(h^{(n)})+S_F(k^{(n)}).
\]
The orthogonal sums converge to $h\oplus k$; continuity therefore gives the
same equality for the possibly nonsplit norms $h,k$. Thus, again viewing
$\bar g$ as the quotient norm on $F\simeq(E\oplus F)/E$,
\[
 S_{E\oplus F}(g)
 \geq S_E(g|_E)+S_F(\bar g)
 \geq S_E(h)+S_F(k)
 =S_{E\oplus F}(h\oplus k),
\]
which proves the claim.
\end{proof}

\begin{theorem}\label{thm:universalvariational}
Fix a finite quiver $Q$, a complete non-Archimedean field $K$,
$\theta\in\R^{Q_0}$, weak ramps $f=(f_a)_{a\in Q_1}$, and a
finite-dimensional representation $E$ of $Q$ over $K$.  For fixed nonzero
determinant vectors in $\det E_i$, let $S_E$ be the resulting potential. Then
\[
\begin{aligned}
 E\text{ is $\theta$-polystable}
 &\Longrightarrow E\text{ admits a variationally harmonic norm for }(\theta,f)\\
 &\Longrightarrow E\text{ is $\theta$-semistable}.
\end{aligned}
\]
If $E$ is $\theta$-polystable, its set of minimizers in the normalized space
$\mathscr S_E$ is a nonempty closed convex subset. For an arbitrary
representation with $\theta(E)=0$, this set is closed and convex but may be empty.
\end{theorem}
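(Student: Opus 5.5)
The plan has three parts: existence for stable representations by the direct method, passage to polystable ones by direct sums, and the converse implication to semistability.

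\emph{Stable case.} Let $E$ be $\theta$-stable, so $\theta(E)=0$ and $S_E$ descends to simultaneous-homothety classes. By \namedref{Lemma}{lem:quotientcompletion}, minimizing $S_E$ on $\Nall(E)$ is the same as minimizing it on the slice $\mathscr S_E$. This slice is complete, CAT(0) (as a closed convex subset of a CAT(0) space) and isometric to $\overline{\Nall}(E)$. I will proceed as follows.
\begin{itemize}
\item \namedref{Theorem}{thm:universalcoercivity} gives that each sublevel set $\{S_E\le C\}\cap\mathscr S_E$ is $d_2$-bounded.
\item \namedref{Lemma}{lem:allnormprofile} and \namedref{Lemma}{lem:allnormconvexity} make these sublevel sets closed and convex.
\item The sublevel sets are nonempty for $C$ above the infimum, which is finite by the coercivity bound. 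They are nested bounded closed convex sets in a complete CAT(0) space, so their intersection is nonempty \cite[Lemma~2.2.19]{Bacak}. Equivalently, a lower semicontinuous convex coercive function on a complete CAT(0) space attains its minimum.
\end{itemize}
Since $\theta(E)=0$, any point of this intersection is a global minimizer on $\Nall(E)$. Trivial cases are handled separately: $N_E=0$, or $\mathscr S_E$ a point.

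\emph{Polystable case.} Write $E=\bigoplus_\ell E^{(\ell)}$ with each $E^{(\ell)}$ stable. The zero representation is trivially handled, since its norm space is a point. Choose determinant vectors on the summands whose wedge is the chosen one; any other choice only shifts $S_E$ by a constant. By induction, \namedref{Lemma}{lem:variationaldirectsum} shows that the orthogonal sum of the summand minimizers minimizes $S_E$. So $E$ admits a variationally harmonic norm.

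\emph{Minimizer set.} For arbitrary $E$ with $\theta(E)=0$, the set of minimizers in $\mathscr S_E$ equals $\{S_E\le\inf S_E\}\cap\mathscr S_E$. This is closed and convex by continuity and convexity of $S_E$ and convexity of $\mathscr S_E$. It may be empty, since the infimum may not be attained.

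\emph{Second implication.} Suppose $h$ minimizes $S_E$.
\begin{itemize}
\item First, $\theta(E)=0$. Otherwise $S_E(ch)=S_E(h)-(\ln c)\theta(E)$ is unbounded below in $c$, so no minimizer exists.
\item Let $U\subseteq E$ be a subrepresentation. I will test $h$ along a ray that scales the directions complementary to $U$. The cleanest route goes through the splitting inequality in the proof of \namedref{Lemma}{lem:variationaldirectsum}: along the path $g_t$ that rescales the quotient directions by $e^{t}$, the slope of $S_E$ should be $-\theta(E/U)+(\text{arrow terms tending to }0)$, which gives $\theta(U)\ge0$.
\item Concretely, I would first treat a split norm $g$ near $h$ with an orthogonal basis adapted to $U$ (\namedref{Lemma}{lem:adapted}). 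In such a basis the matrices are block upper triangular, with zero lower-left block, because $U$ is invariant.
\item Consider $g_t$, which multiplies the weights of the complement basis vectors by $e^{-t}$, $t\to\infty$. The entries from $U$ to the complement vanish, so every minor weight is nonincreasing in $t$. Hence each $f_a(\sigma_j)$ is nonincreasing; it is bounded by convexity and monotonicity of $f_a$ together with $f_a\ge f_a(-\infty)$.
\item The determinant term changes by exactly $t\,\theta(E/U)=-t\,\theta(U)$.
\item If $\theta(U)<0$, then $S_E(g_t)\to-\infty$ linearly. Otherwise, more precisely: $S_E(g_t)\le S_E(g)+t\,\theta(U)$. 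Let $g\to h$; by continuity, the same inequality holds along a limiting geodesic ray from $h$. Minimality of $h$ then forces $\theta(U)\ge0$.
\end{itemize}

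The main obstacle is this last limiting argument. One must make sense of the ray from a nonsplit $h$, or avoid it by using the uniform inequality $S_E(g_t)\le S_E(g)+t\theta(U)$ for all split $g$ and fixed $t$. Since $S_E(g)\to S_E(h)$, this already gives $\inf S_E\le S_E(h)+t\theta(U)$, which suffices without any limiting ray.
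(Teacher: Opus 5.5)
You match the paper on the existence argument: coercivity on the slice $\mathscr S_E$, closed convex bounded sublevel sets, and Ba\v{c}\'ak's lemma. The direct-sum step for polystable $E$, the description of the minimizer set, and the scaling argument for $\theta(E)=0$ also agree with the paper. The problem is the concrete ray in the second implication: it points the wrong way, so that step fails as written.

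\textbf{Why your ray fails.} Suppose you multiply the weights of the complement basis vectors by $e^{-t}$. The only off-diagonal block that can be nonzero runs from the complement $B_{s(a)}$ into $U_{t(a)}$. Its weighted entries are multiplied by $e^{+t}$, not $e^{-t}$. More generally, in a nonzero minor the columns coming from $U_{s(a)}$ are supported on the rows coming from $U_{t(a)}$, so there are at least as many such rows as such columns. Along your ray this makes every minor weight nondecreasing, not nonincreasing. Your determinant computation for this ray is correct: the determinant term changes by $t\theta(E/U)=-t\theta(U)$. But that tends to $+\infty$ when $\theta(U)<0$, so no contradiction arises. The inequality $S_E(g_t)\leq S_E(g)+t\theta(U)$ that you state next contradicts this computation; it belongs to the opposite ray.

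\textbf{The fix.} Reverse the ray, as in your heuristic first bullet and in the paper. Give the basis of each $U_i$ weight $e^{-t}$ and the complement weight $1$; modulo simultaneous homothety, this is the same as scaling the complement by $e^{t}$.
\begin{itemize}
\item The block from the complement into $U$ is multiplied by $e^{-t}$, and the diagonal blocks are unchanged.
\item The same row/column count shows every minor weight is nonincreasing.
\item Nonincreasing partial sums $s_{a,j}$ do not make each $\sigma_{a,j}$ nonincreasing, so your claim about each $f_a(\sigma_j)$ is too strong. You only need a bound on the sum. Since $\sigma_{a,j}\leq\sigma_{a,1}=s_{a,1}$, you get $\sum_j f_a(\sigma_{a,j}(g_t))\leq r_af_a(\sigma_{a,1}(g_0))$, which is the bound the paper uses.
\item The determinant term now contributes exactly $+t\theta(U)$.
\end{itemize}

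\textbf{No approximation of $h$ is needed.} For any split $g$ adapted to $U$, minimality gives $S_E(h)\leq S_E(g_t)\leq C_0+t\theta(U)$ for every $t\geq0$, with $C_0$ independent of $t$. This forces $\theta(U)\geq0$, and it is how the paper handles a nonsplit minimizer.
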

\begin{proof}
For the zero representation, the unique norm minimizes the constant
potential, so the assertions are immediate. Assume henceforth that $E\ne0$.
Work in the complete convex slice $\mathscr S_E$ of
\namedref{Lemma}{lem:quotientcompletion}. If $E$ is $\theta$-stable,
\namedref{Theorem}{thm:universalcoercivity} bounds $S_E$ below and makes
its restriction to $\mathscr S_E$ coercive. That restriction is finite,
continuous, and convex, so Ba\v{c}\'ak's minimizer theorem
\cite[Lemma~2.2.19, p.~42]{Bacak} gives a global minimizer. If $E$ is
$\theta$-polystable, apply this argument to its $\theta$-stable summands and use
\namedref{Lemma}{lem:variationaldirectsum}; a simultaneous rescaling places the
result in $\mathscr S_E$. Continuity and geodesic convexity show, for every
representation $E$ with $\theta(E)=0$, that its set of minimizers is closed and convex whenever it
is nonempty.

Conversely, let $h=(h_i)_i\in\Nall(E)$ be a global minimizer.
Simultaneous rescaling forces $\theta(E)=0$.
Let $U\subseteq E$ be a subrepresentation, choose vertexwise complements
$E_i=U_i\oplus B_i$, and choose bases of the two summands. Let $q_{i,t}$
be the split norm assigning weight $e^{-t}$ to the chosen basis of $U_i$
and weight $1$ to that of $B_i$, for $t\geq0$. In these adapted bases
every arrow has block matrix
\[
 \rho(a)=\begin{pmatrix}A_a&C_a\\0&D_a\end{pmatrix}.
\]
The weighted absolute values of entries in the diagonal blocks are
independent of $t$, while those in $C_a$ are multiplied by $e^{-t}$.
Hence, for every nonzero arrow,
$\sigma_{a,1}(q_t)\leq\sigma_{a,1}(q_0)$. Since the singular values are
decreasing and $f_a$ is increasing, its contribution is bounded by
\[
 \sum_{j=1}^{r_a}f_a(\sigma_{a,j}(q_t))
 \leq r_af_a(\sigma_{a,1}(q_0)),
 \qquad r_a=\rk\rho(a),
\]
uniformly in $t$; zero arrows contribute zero. Moreover, the product of
the basis weights gives
\begin{equation}\label{eq:shrinkingdeterminant}
 \ell_{E_i}(q_{i,t})=\ell_{E_i}(q_{i,0})-t\dim U_i.
\end{equation}
Thus, for a constant $C_0$ independent of $t$,
\[
 S_E(q_t)\leq C_0+t\theta(U).
\]
If $\theta(U)<0$, this contradicts $S_E(q_t)\geq S_E(h)$ for large $t$.
The comparison ray need not start at $h$, so the argument also applies
when $h$ is nonsplit. Therefore $E$ is $\theta$-semistable.
\end{proof}

\begin{remark}\label{rem:minimizernonuniqueness}
There is of course no uniqueness.  For one vertex, no arrows, and $\theta=0$, the
potential is constant, so every normalized norm minimizes it.  More
generally, one may independently rescale the norm on each degree-zero
stable summand, using a single factor across all vertices of that summand;
this preserves the potential $S_E$ and produces flat directions modulo common scaling.
\end{remark}

\section{Split minimizers and semistable reduction}
\label{sec:radiusreduction}

\subsection{The split-minimizer criterion}

The first-variation calculation of \cite[Lemma~6.28]{HKKP} can be made
entirely over $K$ at a split norm.  We give its finite matrix proof.

\begin{lemma}\label{lem:splitfirstvariation}
Let $k\in\Ndiag(E)$, put $C=\GF_{Q,K}(E,\rho,k)$, and let $\mathcal G$
be a finite exhaustive separated increasing filtration of its vertex
spaces, compatible with their gradings, with jumps
$\beta_1<\cdots<\beta_m$. We call $\mathcal G$ the \emph{direction filtration},
distinguishing it from the incident singular filtrations of $C$.
Lift homogeneous bases adapted to $\mathcal G$
to orthogonal bases over $K$ and multiply the norm of a vector of direction
weight $\beta$ by $e^{t\beta}$.  The resulting apartment geodesic $k_t$
satisfies
\begin{equation}\label{eq:splitfirstvariation}
 \left.\frac{d}{dt}\right|_{0+}S_E(k_t)
 =-\sum_{j=1}^m\beta_j\,
       \theta^\star\!\left(\gr^{\mathcal G}_{\beta_j}C\right).
\end{equation}
Direction subobjects carry the incident filtrations by intersection,
and successive quotients carry the induced quotient filtrations.
\end{lemma}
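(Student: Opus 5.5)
The plan is to split $S_E(k_t)$ into its determinant part and its arrow part and to compute each right derivative separately. I would then recombine them using the definition \fulleqref{eq:coredegree} of $\theta^\star$.

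\emph{Determinant term.} For the determinant term, take the lifted orthogonal basis $\mathbf e_i$ at vertex $i$. Each basis vector of direction weight $\beta$ has its weight multiplied by $e^{t\beta}$. Since the determinant norm of a split norm is the product of the orthogonal weights, we get
\[
 \ell_{E_i}(k_{i,t})=\ell_{E_i}(k_i)+t\sum_{j=1}^m\beta_j\dim\gr^{\mathcal G}_{\beta_j}C_i .
\]
This holds because the lifted basis vectors of weight $\beta_j$ are exactly a lift of a homogeneous basis of $\gr^{\mathcal G}_{\beta_j}C_i$. So this term contributes exactly $-\sum_j\beta_j\sum_i\theta_i\dim\gr^{\mathcal G}_{\beta_j}C_i$, with no error.

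\emph{Arrow term: reduction to a single cluster.} Fix an arrow $a$ with matrix $T$ in the lifted bases. By \fulleqref{eq:partialminorformula}, each $s_j(T;k_t)$ is a finite maximum of affine functions of $t$. Its right derivative at $0$ is therefore the maximum of the slopes $\sum_{q\in I}\beta(q)-\sum_{p\in J}\beta(p)$ over the minors $(I,J)$ that attain the maximum at $t=0$. Group the singular values $\sigma_{a,j}(k)$ into clusters of equal value $\lambda$. For $t$ small the clusters stay separated, so each $\sigma_{a,j}(k_t)$ remains close to its own $\lambda$. Hence
\[
 \frac{d}{dt}\Big|_{0+}\sum_j f_a(\sigma_{a,j}(k_t))
 =\sum_\lambda f_a'(\lambda)\,\bigl(s_{j_\lambda}'(0+)-s_{j_\lambda^-}'(0+)\bigr),
\]
where $j_\lambda$ is the last index of the cluster at $\lambda$ and $j_\lambda^-$ the last index before it. Here I use that $f_a$ is $C^1$, and that only the cluster sums $s_{j_\lambda}-s_{j_\lambda^-}$ are needed, not the individual $\sigma_j$.

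\emph{Arrow term: the slope formula.} The key claim is that at a cluster boundary $j=j_\lambda$,
\begin{equation*}
 s_j'(0+)=\sum_\beta\beta\Bigl(\dim\gr^{\mathcal G}_\beta\mathcal F^{a^+}_{\geq\lambda}
 -\dim\gr^{\mathcal G}_\beta\bigl(C_{s(a)}/\mathcal F^{a^-}_{<\lambda}\bigr)\Bigr).
\end{equation*}
In words, the first sum is taken over $\mathcal G$ restricted by intersection to the target step, and the second over $\mathcal G$ induced on the source quotient. To prove this I would pass to reductions. A $j\times j$ minor attains $s_j$ at $t=0$ exactly when its reduced leading coefficient is nonzero. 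By \namedref{Lemma}{lem:monomialcertificate}, this happens iff the target rows $I$ reduce to a basis of $\mathcal F^{a^+}_{\geq\lambda}$ and the source columns $J$ reduce to a basis of a complement of $\mathcal F^{a^-}_{<\lambda}$. Both statements are taken in the graded pieces, checked degree by degree. Maximizing the slope is then a weighted matroid optimization over $\mathcal G$-adapted bases. Greedily, the optimal $I$ picks the largest direction weights that still span the subspace, which gives $\sum\beta\dim\gr^{\mathcal G}_\beta$ of the intersection filtration. The optimal $J$ picks the smallest weights for the quotient, which gives the induced quotient filtration. I would also check that the two choices can be made simultaneously, i.e.\ that a single minor is optimal for both. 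This is the main obstacle: the bases are adapted to $\mathcal G$ but not to the singular filtrations, and one must check that the maximizing minor's non-vanishing condition really decouples into the two independent spanning conditions. I would try first to reduce to the block-diagonal residual isomorphism $\gr^{\mathcal F^{a^-}}_\lambda\to\gr^{\mathcal F^{a^+}}_\lambda$ by a Laplace expansion along the cluster.

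\emph{Recombination.} Differencing consecutive cluster boundaries turns the slope formula into
\[
 \sum_\beta\beta\bigl(\dim\gr^{\mathcal G}_\beta\gr^{\mathcal F^{a^+}}_\lambda C_{t(a)}-\dim\gr^{\mathcal G}_\beta\gr^{\mathcal F^{a^-}}_\lambda C_{s(a)}\bigr),
\]
computed with the incident filtrations induced on $\gr^{\mathcal G}_\beta C$. Multiplying by $f_a'(\lambda)$, summing over $a$ and $\lambda$, and adding the determinant term gives $-\sum_j\beta_j\theta^\star(\gr^{\mathcal G}_{\beta_j}C)$ by \fulleqref{eq:coredegree}. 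This is \fulleqref{eq:splitfirstvariation}.
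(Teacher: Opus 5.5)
\textbf{There is a gap, and it sits exactly at the step you call the main obstacle.} Otherwise your architecture is the paper's: the exact determinant term, telescoping over clusters of equal singular values, the boundary slope formula for $s'_{j_\lambda}(0+)$ as the $\mathcal G$-weight of $\mathcal F^{a^+}_{\geq\lambda}$ minus that of $C_{s(a)}/\mathcal F^{a^-}_{<\lambda}$, and recombination through $\theta^\star$. Your greedy/matroid evaluation of the extremal Pl\"ucker weights is a valid substitute for the paper's echelon-and-complement argument.

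Write $n=j_\lambda$, $B=\mathcal F^{a^+}_{\geq\lambda}$ and $A=\mathcal F^{a^-}_{<\lambda}$. You claim that a minor $(I,J)$ is active if and only if $I$ is good for $B$ and $J$ maps to a basis of $\overline V/A$. This does not follow from Lemma~\ref{lem:monomialcertificate} as it stands, because that lemma's coordinates are not your $\mathcal G$-adapted ones. Your justification also never shows where the cluster-boundary hypothesis enters, and it must enter. At non-boundary indices the active set is not a product: for $T=\mathrm{id}:K^2\to K^2$ with standard norms and $j=1$, the active entries are only $(1,1)$ and $(2,2)$. There the ``max over target weights minus min over source weights'' formula fails.

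\textbf{The missing idea is that at $j=n$ the leading graded part of $\bigwedge^{n}T$ has rank one.}
\begin{enumerate}
\item Take the monomial form $TU_s=U_tD$ of Lemma~\ref{lem:monomialcertificate}. Since $n$ separates distinct singular values, $\bigwedge^{n}D$ has a unique entry of maximal weighted size.
\item Exterior powers of the weighted isometries $U_s,U_t$ are isometries for the exterior norms, so they induce graded isomorphisms.
\item Transport that single matrix unit back to your lifted $\mathcal G$-adapted bases. The leading graded exterior map becomes $\xi\otimes\phi$. Here $\xi$ generates $\det B$, and $\phi$ generates $\det(\overline V/A)^\vee$, pulled back to $\bigwedge^{n}\overline V$.
\item Its $(I,J)$ entry is therefore $p_I(\xi)\,\phi(\bar e_J)$.
\end{enumerate}
So the active set is literally a product of admissible $I$'s and admissible $J$'s. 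Your ``iff'' holds, and the optimal $I$ and $J$ can be chosen independently; no Laplace expansion is needed. Normalizing the leading coefficient may require shifting the target grading by $s_{n}(T;k)$, which need not lie in $\Gamma_K$. This is a real regrading, not multiplication by a scalar of $K$.

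\textbf{Two smaller repairs.}
\begin{itemize}
\item The target condition should read $p_I(B)\neq0$: the coordinate functionals indexed by $I$ restrict to a basis of $B^\vee$. The target coordinate vectors themselves need not lie in $B$.
\item Your recombination also uses the interchange $\dim\gr^{\mathcal G}_\beta\gr^{\mathcal F}_\lambda H=\dim\gr^{\mathcal F}_\lambda\gr^{\mathcal G}_\beta H$, a Zassenhaus-type isomorphism, which the paper proves explicitly. It also uses additivity of the $\mathcal G$-weight for intersected and induced quotient filtrations.
\end{itemize}
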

\begin{proof}
The orthogonal lifting is justified in
\namedref{Remark}{rem:coreinterpretation}. For any finite filtered graded
space $(H,\mathcal G)$, put
\[
 D_{\mathcal G}(H)=\sum_\beta\beta\dim\gr^{\mathcal G}_\beta H.
\]
For graded subspaces $H_1\subseteq H_2\subseteq H$, use the intersected
filtration on $H_2$ and the induced quotient filtration
\[
 \mathcal G_{\leq\beta}(H_2/H_1)
 =\bigl((H_2\cap\mathcal G_{\leq\beta}H)+H_1\bigr)/H_1.
\]
The corresponding formula with $<\beta$ defines the strict step.
Thus $D_{\mathcal G}$ is additive for the induced
restriction--quotient filtrations.

Fix a nonzero arrow map $T=\rho(a):V\to W$ and write
$\overline V=\gr_kV$, $\overline W=\gr_kW$. Let $\mathcal G_s$ and
$\mathcal G_t$ denote the direction filtrations on $\overline V$ and
$\overline W$, respectively. List the distinct finite singular values of
$T$ as $\lambda_1>\cdots>\lambda_q$. Let $n_l$ be the
cumulative multiplicity through $\lambda_l$, and set
\[
 A_l=\mathcal F^-(T)_{<\lambda_l},\qquad
 B_l=\mathcal F^+(T)_{\geq\lambda_l}.
\]
Both $\overline V/A_l$ and $B_l$ have dimension $n_l$.
By \fulleqref{eq:partialminorformula}, $s_j(T;k_t)$ is a finite maximum
of affine functions.  Its right derivative is the largest slope of an
active minor, namely one attaining the maximum at $t=0$.

At $j=n_l$, the leading exterior map has rank one.  Indeed, in the
monomial form of \namedref{Lemma}{lem:monomialcertificate}, the unique
largest exterior monomial uses the first $n_l$ entries: this index is a
boundary between distinct singular-value groups.  After scaling the
target exterior norm by $e^{-s_{n_l}(T;k)}$, its graded map has source
factor $\det(\overline V/A_l)^\vee$ and target factor $\det B_l$.
This scaling is a real regrading: for the target exterior norm $p$ and
$s=s_{n_l}(T;k)$, its component at degree $\alpha$ is
\[
 \{e^{-s}p\leq e^\alpha\}/\{e^{-s}p<e^\alpha\}
 =\{p\leq e^{\alpha+s}\}/\{p<e^{\alpha+s}\}.
\]
Thus the target grading shifts by $-s$, while its direction filtration
and Pl\"ucker support are unchanged.  This normalization is valid for
every real $s$ and requires no scalar in $K$.
This description is invariant under weighted isometries; at $n_q=\rk T$
it follows also from the rank-one full exterior map.

The active minors are therefore products of the nonzero Pl\"ucker
coordinates of these factors.  The largest target weight sum is
$D_{\mathcal G_t}(B_l)$; the smallest source weight sum is
$D_{\mathcal G_s}(\overline V/A_l)$.  To justify the first assertion,
consider any subspace $B$ in a coordinate space with direction weights
$\beta_j$.  A basis of $B$ adapted to the intersected filtration has
weights $\gamma_1,\ldots,\gamma_{\dim B}$; each basis vector has
coordinates only in steps of weight at most its $\gamma_j$.  Hence every
nonzero Pl\"ucker coordinate of $B$ has weight at most
$\sum_j\gamma_j=D_{\mathcal G}(B)$.  An echelon choice of basis in the
successive coordinate blocks gives a nonzero coordinate attaining this
weight.  For the source factor, the nonzero Pl\"ucker coordinates of
$\det(\overline V/A_l)^\vee$ are indexed by complements of the nonzero
Pl\"ucker coordinates of $A_l$.  Their smallest source weight is therefore
\[
 D_{\mathcal G_s}(\overline V)-D_{\mathcal G_s}(A_l)
 =D_{\mathcal G_s}(\overline V/A_l),
\]
by additivity for the induced quotient filtration.  Since the leading
exterior map has rank one, each nonzero source coordinate can be paired
with each nonzero target coordinate.  Taking the largest active slope
therefore gives
\begin{equation}\label{eq:partialsumfirstvariation}
 s'_{n_l}(0+)=D_{\mathcal G_t}(B_l)
                  -D_{\mathcal G_s}(\overline V/A_l).
\end{equation}

Put $c_l=f_a'(\lambda_l)$, $c_{q+1}=0$.  Differentiating
$\sigma_j=s_j-s_{j-1}$ and grouping equal singular values gives
\[
 \begin{aligned}
 \left(\sum_jf_a(\sigma_j(t))\right)'_{0+}
 &=\sum_l(c_l-c_{l+1})s'_{n_l}(0+)\\
 &=\sum_\lambda f_a'(\lambda)
 \bigl(D_{\mathcal G_t}(\gr^{\mathcal F^+}_\lambda\overline W)-D_{\mathcal G_s}(\gr^{\mathcal F^-}_\lambda\overline V)\bigr).
 \end{aligned}
\]
The second equality follows by telescoping
\fulleqref{eq:partialsumfirstvariation}.  Only finite singular jumps
contribute; a zero arrow contributes an empty sum.
For clarity, the interchange of the incident and direction filtrations
can be seen on a single graded vertex space $H$.  At a finite jump
$\lambda$ of an incident filtration $\mathcal F$, put
\[
 (F_0,F_1)=
 \begin{cases}
  (\mathcal F_{\leq\lambda}H,\mathcal F_{<\lambda}H),
       &\text{if $\mathcal F$ is increasing},\\
  (\mathcal F_{\geq\lambda}H,\mathcal F_{>\lambda}H),
       &\text{if $\mathcal F$ is decreasing}.
 \end{cases}
\]
On $F_0/F_1$, the direction steps are the induced quotient steps
defined above.  On $\gr^{\mathcal G}_\beta H$, the incident steps
corresponding to $F_j$, $j=0,1$, are
\[
 \bigl((F_j\cap\mathcal G_{\leq\beta}H)
                  +\mathcal G_{<\beta}H\bigr)
                  /\mathcal G_{<\beta}H.
\]
Consequently both orders of taking graded pieces have the common
quotient
\[
 \gr^{\mathcal G}_\beta(\gr^{\mathcal F}_\lambda H)
 \simeq
 \frac{F_0\cap\mathcal G_{\leq\beta}H}
 {(F_1\cap\mathcal G_{\leq\beta}H)
                    +(F_0\cap\mathcal G_{<\beta}H)}
 \simeq
 \gr^{\mathcal F}_\lambda(\gr^{\mathcal G}_\beta H).
\]
Their dimensions therefore agree for the source increasing and target
decreasing singular filtrations.  Adding the determinant derivative
$-\sum_i\theta_iD_{\mathcal G_i}(\gr_{k_i}E_i)$ and using
\fulleqref{eq:coredegree} proves \fulleqref{eq:splitfirstvariation}.
\end{proof}

\begin{corollary}\label{cor:splitcriterionallfields}
Fix a finite quiver $Q$, a complete non-Archimedean field $K$,
$\theta\in\R^{Q_0}$, weak ramps $(f_a)_{a\in Q_1}$, and a
finite-dimensional representation $E$ over $K$.  Choose
$0\ne\omega_i\in\det E_i$ and let $S_E$ be the resulting potential.
\begin{enumerate}[label=\textup{(\roman*)},leftmargin=*,nosep]
\item A split norm is HKKP harmonic if and only if it globally minimizes
$S_E$ on the split locus; equivalently, it minimizes its continuous
extension on all norms.  The split harmonic locus is geodesically convex.
\item If $E$ admits a split harmonic norm, then $E$ is
$\theta$-semistable.
\end{enumerate}
\end{corollary}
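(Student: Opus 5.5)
Part (i) will follow from the first-variation formula \namedeqref{Lemma}{eq:splitfirstvariation} combined with convexity. I will argue both implications at a split norm $k$, with $C=\GF_{Q,K}(E,\rho,k)$.

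\emph{Harmonic $\Rightarrow$ minimizer.} Suppose $C$ is $\theta^\star$-semistable. Then $\theta(E)=\theta^\star(C)=0$ by \namedref{Lemma}{lem:coretotaldegree}. Take any split $y$ and put $k$ and $y$ in a common product apartment. The geodesic $[k,y]$ then multiplies basis weights by $e^{t\beta}$, where the $\beta$ are the coordinate differences. The corresponding direction filtration $\mathcal G$ on $\gr_k E_i$ is given by the images of the steps spanned by basis vectors of weight $\le\beta$. Since the bases are orthogonal, this is a compatible, exhaustive, separated graded filtration, and the geodesic is exactly the $k_t$ of \namedref{Lemma}{lem:splitfirstvariation}.

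Next I rewrite the right side of \eqref{eq:splitfirstvariation} by Abel summation:
\[-\sum_j\beta_j\theta^\star(\gr_{\beta_j}^{\mathcal G}C)=-\beta_m\theta^\star(C)+\sum_{j<m}(\beta_{j+1}-\beta_j)\theta^\star(\mathcal G_{\le\beta_j}C).\]
Each $\mathcal G_{\le\beta_j}C$ is a strict subobject with the intersected filtrations. Additivity of $\theta^\star$ on strict short exact sequences, in the form used in the lemma, justifies the summation. Semistability makes every term nonnegative, so the right derivative is $\ge0$. Convexity (\namedref{Lemma}{lem:allnormconvexity}) then gives $S_E(y)\ge S_E(k)$. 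Minimizing on the split locus is the same as minimizing on $\Nall(E)$, by density and continuity (\namedref{Lemma}{lem:allnormprofile}).

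\emph{Minimizer $\Rightarrow$ harmonic.} Suppose $k$ minimizes $S_E$. Rescaling forces $\theta(E)=0$, so $\theta^\star(C)=0$. Let $D\subseteq C$ be a strict subobject, i.e.\ graded subspaces $D_i\subseteq\gr_kE_i$ carrying the intersected incident filtrations. I take the two-step direction filtration $\mathcal G_{\le0}=D$, $\mathcal G_{\le1}=$ everything, with jumps $\beta_1=0$ and $\beta_2=1$. The lemma gives
\[\tfrac{d}{dt}\big|_{0+}S_E(k_t)=-\theta^\star(C/D)=\theta^\star(D)-\theta^\star(C)=\theta^\star(D).\]
Here I use additivity for the strict sequence $0\to D\to C\to C/D\to0$; the quotient filtrations induced in the lemma are exactly those of the cokernel in $\mathcal C_\star(Q)$. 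Minimality forces this derivative to be $\ge0$, so $\theta^\star(D)\ge0$ and $C$ is semistable. The harmonic split locus is then the intersection of the convex minimizer set with $\Ndiag(E)$. It is geodesically convex because the canonical geodesic between split points stays in a common apartment, so it lies in $\Ndiag(E)$, and the minimizer set is convex.

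Part (ii) is immediate from (i) and \namedref{Theorem}{thm:universalvariational}: a split harmonic norm is a global minimizer on $\Nall(E)$, hence variationally harmonic, and the ray argument there gives $\theta$-semistability.

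The main obstacle is checking that the apartment geodesic toward an arbitrary $y$ really fits the hypotheses of \namedref{Lemma}{lem:splitfirstvariation}. One must verify that the images of weight-$\le\beta$ basis spans give a graded filtration whose adapted homogeneous basis lifts to the chosen orthogonal basis. The same care is needed to confirm that the induced quotient filtrations agree with the cokernel filtrations of $\mathcal C_\star(Q)$, so that additivity of $\theta^\star$ applies.
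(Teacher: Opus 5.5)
Your proposal is correct and follows essentially the paper's proof. You combine summation by parts in \eqref{eq:splitfirstvariation} with convexity along common-apartment segments for one direction, and use a two-step direction filtration on a strict subobject for the converse; your weights $0,1$ on $D$ and $C/D$ are equivalent to the paper's $-1,0$ because $\theta^\star(C)=0$. Part~(ii) and the geodesic convexity of the split harmonic locus are argued exactly as in the paper.
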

\begin{proof}
For part~(i), let $C=\GF_{Q,K}(E,\rho,k)$ be semistable.
Since $\theta^\star(C)=0$, summation by parts in
\fulleqref{eq:splitfirstvariation} gives
\[
 \left.\frac{d}{dt}\right|_{0+}S_E(k_t)
 =\sum_{j<m}(\beta_{j+1}-\beta_j)
        \theta^\star(\mathcal G_{\leq\beta_j}C)\geq0.
\]
Every split competitor lies with $k$ in a common product apartment;
its joining segment has a direction filtration given by the coordinate
slopes.  Convexity and the nonnegative initial derivative prove minimality
on the split locus.  Density and continuity extend it to all norms.

Conversely, minimality on the split locus forces $\theta(E)=0$ by
simultaneous scaling in both directions.  Thus $\theta^\star(C)=0$ by
\namedref{Lemma}{lem:coretotaldegree}.  For a strict subobject $U\subseteq C$,
the direction with weight $-1$ on $U$ and $0$ on $C/U$ has derivative
$\theta^\star(U)$ by the lemma.  Minimality makes it nonnegative, proving
semistability of $C$.  Minimization on all norms implies minimization on
the split locus, so both equivalences follow.
Split endpoints have a split joining segment; convexity makes the
segment between minimizers a segment of minimizers.  Part~(i) is therefore
independent of \namedref{Theorem}{thm:universalvariational}.

For part~(ii), part~(i) makes a split harmonic norm a global minimizer
on all norms, and the converse implication of
\namedref{Theorem}{thm:universalvariational} gives $\theta$-semistability.
\end{proof}

\subsection{Semistable reduction with real radius labels}

We now isolate the two bounded-arrow lemmas needed for split attainment.
To avoid sign ambiguity, the two conventions used in the paper are
\[
 \theta\text{-semistability in \fulleqref{eq:stability}}:
 \theta(U)\geq0,\qquad
 \kappa\text{-semistability below}:\kappa(U)\leq0.
\]

\begin{definition}\label{def:realgradedmodules}
The \emph{real-graded residue field} of $K$ is
\[
 \mathcal R=\bigoplus_{\alpha\in\R}\mathcal R_\alpha,
 \qquad
 \mathcal R_\alpha=
 \{x\in K\mid |x|\leq e^\alpha\}/
 \{x\in K\mid |x|<e^\alpha\}.
\]
Multiplication is induced by multiplication in $K$. Thus
$\mathcal R_0=\widetilde K$, and every nonzero homogeneous element is invertible:
the initial form of $x^{-1}$ is the inverse of the initial form of
$x\ne0$. Here \emph{graded field} refers to the invertibility of nonzero homogeneous
elements, not of arbitrary nonzero elements of the underlying ring.
The component $\mathcal R_\alpha$ is zero if $\alpha\notin\Gamma_K$ and
one-dimensional over $\widetilde K$ otherwise.

For a norm $p$ on a finite-dimensional $K$-vector space $V$, put
\[
 M_p(V)=\gr_p^{\R}V
 :=\bigoplus_{\alpha\in\R}
 \{v\in V\mid p(v)\leq e^\alpha\}/
 \{v\in V\mid p(v)<e^\alpha\}.
\]
Scalar multiplication makes this a graded $\mathcal R$-module. A homogeneous
$\mathcal R$-linear map of degree $\eta$ sends the component of degree $\alpha$
to that of degree $\alpha+\eta$; its space is denoted
$\operatorname{Hom}_{\mathcal R}^\eta$. A graded module is \emph{graded free of finite
rank} if it has a finite homogeneous basis, and $\rk_{\mathcal R}$ denotes the number
of basis vectors. All submodules and module morphisms below are graded,
with morphisms of degree zero unless a degree is specified.
\end{definition}

\begin{lemma}\label{lem:realgradedlinearalgebra}
Graded submodules and quotients of a finite-rank graded free $\mathcal R$-module
are graded free, and every graded submodule has a homogeneous complement.
Graded rank is additive in exact sequences and is zero only for the zero
module. If $p$ is a split norm on a finite-dimensional $K$-vector space
$V$, then $M_p(V)$ has graded rank $\dim_KV$, and every
homogeneous basis of $M_p(V)$ lifts to a $p$-orthogonal basis of $V$.

If $p,q$ are split norms on finite-dimensional $K$-vector spaces $V,W$,
respectively, then, for every
$\eta\in\R$, taking initial forms gives an isomorphism
\begin{equation}\label{eq:gradedhomidentification}
 \bigl(\gr^{\R}_{\|\cdot\|_{p\to q}}
          \operatorname{Hom}_K(V,W)\bigr)_\eta
 \simeq\operatorname{Hom}_{\mathcal R}^\eta(M_p(V),M_q(W)).
\end{equation}
These identifications respect addition and composition in the specified
degrees.
\end{lemma}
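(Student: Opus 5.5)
The plan is to treat $\mathcal R$ as a graded field and mimic ordinary linear algebra homogeneously, then compare with norms through orthogonal bases.

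\emph{Linear algebra over $\mathcal R$.} First I would check that every finitely generated graded $\mathcal R$-module is graded free, by the homogeneous analogue of basis extraction. Given homogeneous generators, discard any generator that is an $\mathcal R$-combination of the others; such a combination can be taken homogeneous of the same degree by taking components. The remaining family is independent: in a nontrivial homogeneous relation, some coefficient is a nonzero homogeneous element, hence invertible, so the corresponding generator is redundant. The same exchange argument gives the following: a homogeneous basis of a graded submodule $N$ of a free module $M$ of finite rank extends, by homogeneous basis vectors of $M$, to a basis of $M$. This yields a homogeneous complement and shows that $N$ and $M/N$ are free. Rank is then well defined and additive. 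To see that rank is well defined, note that degree-$\alpha$ components of a free module of rank $n$ with basis degrees $\gamma_j$ are sums of the lines $\mathcal R_{\alpha-\gamma_j}$. Hence, summing over a coset of $\Gamma_K$, the $\widetilde K$-dimension of $\bigoplus_{\gamma\in\alpha+\Gamma_K}$ restricted to one representative recovers the number of basis vectors in each class. Alternatively, use that any two bases are related by an invertible homogeneous matrix. Rank zero forces the zero module.

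\emph{Norm comparison.} For a split $p$ with orthogonal basis $e_j$, the initial forms $\bar e_j\in M_p(V)_{\ln p(e_j)}$ form a homogeneous basis. Generation holds because a vector $v=\sum a_je_j$ with $p(v)=e^\alpha$ has initial form $\sum_{|a_j|p(e_j)=e^\alpha}\bar a_j\bar e_j$. Independence holds because orthogonality forbids cancellation of leading terms. So the rank is $\dim_KV$. Conversely, given any homogeneous basis $\bar b_j$, lift each to $b_j$ with $p(b_j)=e^{\deg\bar b_j}$. Suppose $p(\sum a_jb_j)<\max|a_j|p(b_j)$. Then the initial forms of the maximal terms would give a nontrivial homogeneous relation among the $\bar b_j$. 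Hence the lifts are orthogonal, and there are $\dim V$ of them, so they form a basis.

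\emph{The Hom identification.} Choose orthogonal bases with weights $u,w$. By \fulleqref{eq:weightedmatrixnorm}, $\|T\|\leq e^\eta$ exactly when $|T_{ij}|e^{w_i-u_j}\leq e^\eta$ for all entries. The initial form of degree $\eta$ then keeps the leading coefficients of the entries with equality, which defines a homogeneous map $\bar e_j\mapsto\sum_i\overline{T_{ij}}\,\bar f_i$ of degree $\eta$. This map is well defined, injective on the graded quotient (strict inequality for all entries iff the induced map is zero), and surjective, since any homogeneous matrix lifts entrywise. Independence of the chosen bases follows because the map can be described intrinsically: send the initial form of $v$ to the degree-$(\alpha+\eta)$ class of $Tv$. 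Compatibility with addition is immediate. Compatibility with composition follows from $\|ST\|\leq\|S\|\|T\|$ together with the intrinsic description.

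The main obstacle is bookkeeping rather than depth. The most delicate point is the homogeneous exchange argument, where nonhomogeneous relations must first be reduced to homogeneous ones by projecting to a single degree.
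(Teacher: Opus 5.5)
Your proposal follows essentially the same route as the paper. Both use elimination over $\mathcal R$ based on invertibility of nonzero homogeneous elements, take initial forms of an orthogonal basis as a homogeneous basis, show lifts are orthogonal by the leading-term argument, and identify Hom through weighted matrix entries; phrasing the submodule step as generator extraction plus exchange, and checking composition through the intrinsic induced map rather than matrix multiplication, changes nothing essential. The one detail to make explicit is that extraction presupposes a finite generating set for a graded submodule $N\subseteq M$: get it from the exchange bound (a homogeneous independent family in $M$ has at most $\rk_{\mathcal R}M$ members, so a maximal one in $N$ is a basis), or, as the paper does, by pivoting on one vector of $N$ and inducting on the ambient rank.
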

\begin{proof}
Homogeneous Gaussian elimination works because every nonzero homogeneous
coefficient is invertible. More explicitly, a nonzero homogeneous vector
in a graded submodule has an invertible coefficient in some homogeneous
basis coordinate. Use that coordinate as a pivot, subtract homogeneous
multiples to clear it, and continue in the remaining coordinates. Induction
on the ambient rank gives a homogeneous basis of the submodule extending
to one of the whole module. It also proves the quotient and rank assertions.

The initial forms of a $p$-orthogonal basis form a homogeneous $\mathcal R$-basis
of $M_p(V)$, by the maximum formula for $p$. Conversely, lift a homogeneous
basis to vectors in $V$ of the corresponding norms. In any nonzero linear
combination of the lifts, let $e^\alpha$ be the largest norm of its terms.
The initial forms of those terms in degree $\alpha$ have nonzero sum by
graded linear independence. The combination therefore has norm exactly
$e^\alpha$. This proves orthogonality and linear independence; there are
$\dim_KV$ lifts, so they form a basis of $V$.

If $V$ or $W$ is zero, both Hom spaces are zero. Otherwise choose
orthogonal bases of $V,W$ with logarithmic weights $a_j,c_k$.
For a matrix $(t_{kj})$, its operator norm is
$\max_{k,j}|t_{kj}|e^{c_k-a_j}$. A matrix of norm at most $e^\eta$
therefore has degree-$\eta$ initial coefficient $(k,j)$ in
$\mathcal R_{\eta+a_j-c_k}$. These are exactly the
coefficients of a degree-$\eta$ graded map. Every such finite matrix lifts
coefficientwise to $K$, and its initial form is zero exactly when its
operator norm is strictly less than $e^\eta$. Matrix addition and
multiplication induce the asserted compatibility with addition and
composition.
\end{proof}

For a split norm, the support of $M_p(V)$ is finite modulo $\Gamma_K$,
but need not be finite in $\R$. If $A$ is a set of representatives as in
\namedref{Section}{sec:prelim}, then
\[
 \rk_{\mathcal R} M_p(V)=\sum_{\alpha\in A}\dim_{\widetilde K}(M_p(V))_\alpha.
\]
Indeed, each homogeneous basis vector contributes one dimension in exactly
one representative degree. The same assertion applies to every graded
submodule. This recovers the earlier dimension convention without using
representatives to define arrow maps.

\begin{definition}\label{def:radiuslabelledreduction}
Let $\mathscr Q$ be a finite quiver, let $X$ be a finite-dimensional
representation of $\mathscr Q$, with arrow maps
\[
 \rho_X(e):X_{s(e)}\longrightarrow X_{t(e)},
 \qquad e\in\mathscr Q_1,
\]
and fix a label function
\[
 b:\mathscr Q_1\longrightarrow\R,
 \qquad e\longmapsto b_e.
\]
A norm $p=(p_v)_{v\in\mathscr Q_0}$ of $X$ is \emph{$b$-bounded} if,
for all $e\in\mathscr Q_1$ and $x\in X_{s(e)}$,
\begin{equation}\label{eq:labelledbound}
 p_{t(e)}(\rho_X(e)x)\leq e^{b_e}p_{s(e)}(x).
\end{equation}
The label $b_e$ is logarithmic; the corresponding operator-norm bound is
$e^{b_e}$.
Let $\mathcal A_b$ be the category whose objects consist of finite-rank
graded free $\mathcal R$-modules $Y_v$ at the vertices and homogeneous maps
$Y_{s(e)}\to Y_{t(e)}$ of degree $b_e$ at the arrows. Morphisms are
degree-zero vertexwise maps intertwining all arrows.
If $p$ is split and $b$-bounded, its \emph{radius-labeled graded reduction}
$\gr_p^bX\in\mathcal A_b$ has vertex modules $M_{p_v}(X_v)$ and arrow maps
\[
 \gr_p^{b_e}(\rho_X(e)):
 (M_{p_{s(e)}}(X_{s(e)}))_\alpha\longrightarrow
 (M_{p_{t(e)}}(X_{t(e)}))_{\alpha+b_e},
 \qquad \alpha\in\R,
\]
induced by $\rho_X(e)$ on the displayed quotients. The bound
\fulleqref{eq:labelledbound} makes these maps well-defined, including on
the strict sublevel quotients. The grading retains every real degree; no
scalar identifications between different radii are chosen.

Fix $\kappa=(\kappa_v)_{v\in\mathscr Q_0}\in\R^{\mathscr Q_0}$ with
\[
 \kappa(X):=\sum_{v\in\mathscr Q_0}\kappa_v\dim X_v=0.
\]
Our sign convention for $X$ is that it is
\emph{$\kappa$-semistable} when
\[
 \kappa(U):=\sum_{v\in\mathscr Q_0}\kappa_v\dim U_v\leq0
\]
for every subrepresentation $U\subseteq X$. For $Y\in\mathcal A_b$, put
\[
 d(Y):=\sum_{v\in\mathscr Q_0}\rk_{\mathcal R}Y_v,
 \qquad \kappa(Y):=\sum_{v\in\mathscr Q_0}\kappa_v\rk_{\mathcal R}Y_v.
\]
An object $Y$ with $\kappa(Y)=0$ is $\kappa$-semistable if
$\kappa(U)\leq0$ for every graded subrepresentation $U\subseteq Y$.
\end{definition}

Kernels and cokernels in $\mathcal A_b$ are taken vertexwise. By
\namedref{Lemma}{lem:realgradedlinearalgebra} they again have finite graded
rank, and the arrow maps descend with their prescribed degrees. Thus
the canonical map from coimage to image is an isomorphism: it is the usual
graded-module isomorphism at each vertex and intertwines the induced arrows.
Consequently $\mathcal A_b$ is abelian. A proper inclusion of subobjects increases total
graded rank, so every object has finite length.

\begin{definition}\label{def:gradedslope}
For a nonzero object $Y\in\mathcal A_b$, its \emph{slope} and
\emph{maximal slope} are
\[
 \mu(Y):=\frac{\kappa(Y)}{d(Y)},\qquad
 \mu_{\max}(Y):=
 \max_{\{0\}\ne U\subseteq Y}\mu(U).
\]
The maximum exists: the vertex graded ranks of $U$ are bounded integers,
so only finitely many slopes occur. We call $Y$ slope-semistable if every
nonzero subobject has slope at most $\mu(Y)$. When $\kappa(Y)=0$, this is
exactly $\kappa$-semistability. The zero object is $\kappa$-semistable; no
slope is assigned to it.
\end{definition}

\begin{lemma}\label{lem:maximalslope}
Every nonzero $Y\in\mathcal A_b$ has a unique largest subobject $A(Y)$
of slope $\mu_{\max}(Y)$. It is slope-semistable, every nonzero quotient
of $A(Y)$ has slope at least $\mu_{\max}(Y)$, and every nonzero subobject
of $Y/A(Y)$ has strictly smaller slope.
\end{lemma}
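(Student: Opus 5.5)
The plan is the standard Harder--Narasimhan argument in the abelian finite-length category $\mathcal A_b$, using only that $d$ and $\kappa$ are additive on short exact sequences (\namedref{Lemma}{lem:realgradedlinearalgebra}) and that $d$ vanishes only on the zero object. The basic tool is the ``see-saw'' property. For $0\to U\to W\to W/U\to0$ with $U$ and $W/U$ nonzero, $\mu(W)$ is the weighted average
\[
 \mu(W)=\frac{d(U)\mu(U)+d(W/U)\mu(W/U)}{d(U)+d(W/U)},
\]
so $\mu(W)$ lies between $\mu(U)$ and $\mu(W/U)$. Strict inequalities are preserved in the usual way.

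First we establish existence and uniqueness of $A(Y)$. Put $\mu_0=\mu_{\max}(Y)$, which exists by \namedref{Definition}{def:gradedslope}. The key claim is that if $U_1,U_2\subseteq Y$ both have slope $\mu_0$, then so does $U_1+U_2$. Consider the exact sequence
\[
 0\to U_1\cap U_2\to U_1\oplus U_2\to U_1+U_2\to0 .
\]
Additivity gives
\[
 \kappa(U_1+U_2)=\kappa(U_1)+\kappa(U_2)-\kappa(U_1\cap U_2),
\]
and similarly for $d$. If $U_1\cap U_2\neq0$, then $\kappa(U_1\cap U_2)\le\mu_0\,d(U_1\cap U_2)$. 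If $U_1\cap U_2=0$, both terms vanish. In either case $\kappa(U_1+U_2)\ge\mu_0\,d(U_1+U_2)$, and maximality of $\mu_0$ forces equality. Since $Y$ has finite length (equivalently, total graded rank is bounded), a subobject of slope $\mu_0$ of maximal rank exists. By the claim it contains every other subobject of slope $\mu_0$, and this gives the unique largest one, $A(Y)$.

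Next we derive the three properties. Semistability of $A(Y)$ is immediate: its subobjects are subobjects of $Y$, so their slopes are at most $\mu_0=\mu(A(Y))$. For a nonzero quotient $A(Y)/U$, if $U=0$ there is nothing to prove. Otherwise $\mu(U)\le\mu_0=\mu(A(Y))$, and the see-saw gives $\mu(A(Y)/U)\ge\mu_0$.

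Finally, let $0\ne\overline U\subseteq Y/A(Y)$, and let $U\supseteq A(Y)$ be its preimage, so that $U/A(Y)=\overline U$ and $U\supsetneq A(Y)$. Then $\mu(U)\le\mu_0$. If $\mu(U)=\mu_0$, this contradicts the maximality of $A(Y)$. Hence $\mu(U)<\mu_0=\mu(A(Y))$, and the see-saw applied to $0\to A(Y)\to U\to\overline U\to0$ gives $\mu(\overline U)<\mu(U)<\mu_0$.

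The only point requiring care is the sum claim: we must check that intersections and sums of graded subrepresentations remain subobjects in $\mathcal A_b$, and that rank additivity holds for them. Both follow from $\mathcal A_b$ being abelian with vertexwise kernels and cokernels, together with \namedref{Lemma}{lem:realgradedlinearalgebra}. The zero-intersection case is handled separately, as above, since no slope is assigned to the zero object.
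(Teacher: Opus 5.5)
Your proof is correct and follows essentially the same route as the paper. The paper also picks a slope-$\mu_0$ subobject of maximal total rank and shows via the additivity identity for $A+C$ and $A\cap C$ that it contains every other slope-$\mu_0$ subobject; it absorbs your separate zero-intersection case by setting $\nu(U)=\kappa(U)-\mu_0 d(U)$ with $\nu(\{0\})=0$, and then obtains the quotient bound and the strict bound on $Y/A(Y)$ from the same additivity that your see-saw inequality packages.
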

\begin{proof}
Put $\mu_0=\mu_{\max}(Y)$ and choose a subobject $A$ of slope $\mu_0$
having largest total rank. For a subobject $U\subseteq Y$, put
$\nu(U)=\kappa(U)-\mu_0d(U)$, including $\nu(\{0\})=0$.
Then $\nu(U)\leq0$. If $C$ also has slope $\mu_0$, additivity gives
\[
 \nu(A+C)=\nu(A)+\nu(C)-\nu(A\cap C)
          =-\nu(A\cap C)\geq0.
\]
Hence $A+C$ has slope $\mu_0$. Maximality of $d(A)$ forces $A+C=A$,
so $C\subseteq A$. Thus $A=A(Y)$ is unique and largest, and its
slope-semistability follows from the definition of $\mu_0$.
For a nonzero quotient $I$ of $A$, additivity and the slope bound for its
kernel give $\kappa(I)\geq\mu_0d(I)$.

If a nonzero subobject of $Y/A$ had slope at least $\mu_0$, its inverse
image in $Y$ would have slope at least $\mu_0$ and rank strictly larger
than $d(A)$. A strictly larger slope contradicts maximality of $\mu_0$;
equality contradicts the choice of $A$. This proves the strict quotient
bound.
\end{proof}

The subobject $A(Y)$ is the first Harder--Narasimhan (HN) subobject.
Only the properties just proved, not the rest of the HN filtration, will
be needed below.

\begin{definition}\label{def:gradetranslation}
Let $Y\in\mathcal A_b$. For
$\delta\in\R$, define its \emph{grade translation} $T_\delta Y$ by
\[
 (T_\delta Y)_{v,\alpha}=Y_{v,\alpha-\delta},
 \qquad v\in\mathscr Q_0,\quad \alpha\in\R,
\]
with the same $\mathcal R$-action and the reindexed arrow maps. This is an exact
autoequivalence with inverse $T_{-\delta}$. It preserves vertex graded
ranks, hence $\kappa$, slopes, and the subobjects characterized in
\namedref{Lemma}{lem:maximalslope}. For every split
norm $p$, the identity on vectors induces the canonical identification
\[
 M_{e^\delta p}(V)=T_\delta M_p(V).
\]
Indeed, the sublevel of $e^\delta p$ at $e^\alpha$ is the sublevel of $p$
at $e^{\alpha-\delta}$, and likewise for strict sublevels. These
identifications commute with scalar multiplication and all induced arrow
maps.
\end{definition}

\begin{lemma}
\label{lem:radiuslangton}
Suppose that $X$ admits a split $b$-bounded norm.  Then $X$ is
$\kappa$-semistable if and only if it admits a split $b$-bounded norm $p$
for which $\gr_p^bX$ is $\kappa$-semistable.
\end{lemma}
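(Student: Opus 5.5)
The direction ``$\Leftarrow$'' should be the easy one, and I would settle it first.  Let $p$ be split and $b$-bounded with $\gr_p^bX$ $\kappa$-semistable, and let $U\subseteq X$ be a subrepresentation.  By \namedref{Lemma}{lem:adapted}, each restriction $p_v|_{U_v}$ is split.  The graded pieces of $p_v|_{U_v}$ form a graded submodule of $M_{p_v}(X_v)$, of graded rank $\dim U_v$ by \namedref{Lemma}{lem:realgradedlinearalgebra}.  The bound \fulleqref{eq:labelledbound} restricts to $U$, so these submodules are stable under the degree-$b_e$ arrow maps and form a subobject of $\gr_p^bX$ in $\mathcal A_b$.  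Its $\kappa$-value is $\kappa(U)$, hence $\kappa(U)\leq0$.  Together with the standing hypothesis $\kappa(X)=0$, this gives $\kappa$-semistability of $X$.

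For ``$\Rightarrow$'' I would run a Langton-type descent.  Start from any split $b$-bounded $p$ and put $Y=\gr_p^bX$.  If $Y$ is not semistable, then, since $\kappa(Y)=\kappa(X)=0$, we have $\mu_{\max}(Y)>0$.  Let $A=A(Y)$ be the subobject of \namedref{Lemma}{lem:maximalslope}.

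Choose a homogeneous complement $B_v$ of $A_v$ at each vertex (\namedref{Lemma}{lem:realgradedlinearalgebra}).  Lift homogeneous bases of $A_v$ and $B_v$ to a $p_v$-orthogonal basis of $X_v$.  For small $\delta>0$, define $p'$ by multiplying the weights of the lifted $B$-vectors by $e^{\delta}$.  This is a radius modification; by \namedref{Definition}{def:gradetranslation} it is a grade translation $T_\delta$ on the complementary part.

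The first thing to check is that $p'$ is again $b$-bounded.
\begin{itemize}
\item Matrix entries from $B$-vectors, and entries from $A$-vectors to $A$-vectors, are multiplied by $e^{-\delta}$ or $1$, so they stay within the bound \fulleqref{eq:labelledbound}.
\item Entries from $A$-vectors to $B$-vectors vanish in $Y$, because $A$ is a subobject.  Hence they satisfy the bound strictly, and by finiteness there is a uniform margin; any $\delta$ below that margin preserves $b$-boundedness.
\end{itemize}
Then $Y'=\gr^b_{p'}X$ contains $T_\delta(Y/A)$ as a subobject, with quotient isomorphic to $A$.  So $Y'$ is the reversed extension.

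Next I would show that the pair $(\mu_{\max},d(A(\,\cdot\,)))$ strictly decreases lexicographically.  Let $U'\subseteq Y'$ be nonzero.  Its intersection with $T_\delta(Y/A)$ has slope $<\mu_0$ if nonzero, by \namedref{Lemma}{lem:maximalslope}.  Its image in $A$ has slope at most $\mu_0$, by semistability of $A$.  Hence $\mu_{\max}(Y')\leq\mu_0$.  Equality can only occur if $U'$ is contained in nothing from $Y/A$, so that $U'$ maps isomorphically onto a subobject of $A$ of slope $\mu_0$.  Moreover, if $d(A(Y'))=d(A)$, then the reversed extension $0\to T_\delta(Y/A)\to Y'\to A\to0$ splits.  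Since only finitely many pairs occur (bounded integer ranks), the descent must end at a $Y$ with $\mu_{\max}\leq0$, i.e.\ a semistable one, provided each nonterminal step is strict.

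The main obstacle is precisely that strictness: the reversed extension must be nonsplit, or at least must not contain a copy of $A$ of full rank.  My plan here is to use the freedom in the lifted splitting.  The lifts of the $A$-basis may be changed by small multiples of the $B$-lifts without affecting $Y$, and $\delta$ may also be varied.  Suppose every such choice makes $A$ lift back as a subobject of $Y'$.  Then the lifted $A$-span is carried, up to arbitrarily small corrections, into itself by every arrow.  Passing to the limit of these choices, which I would organise as a shrinking family of vertexwise subspaces and control via the operator-norm description \fulleqref{eq:weightedmatrixnorm}, should give an honest subrepresentation $U\subseteq X$ with $\dim U_v=\rk A_v$.  Then $\kappa(U)=\kappa(A)>0$, contradicting semistability of $X$.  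If the limit argument is delicate over a non-spherically-complete $K$, I would instead argue directly with the finite matrices: choose a single entry of the $A\to B$ block that is nonzero in $X$ (it exists because the lifted $A$-span is not $X$-invariant, by semistability), and pick $\delta$ equal to its defect from the bound $e^{b_e}$, which makes that entry survive as a nonzero degree-$b_e$ component in $Y'$ from $A$ to $T_\delta(Y/A)$.  I would then verify that this obstructs a full-rank copy of $A$ in $Y'$.  This is the step I expect to require the real work.
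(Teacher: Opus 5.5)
\textbf{The gap is the nonsplitting step.} Your ``$\Leftarrow$'' direction matches the paper. So does your termination bookkeeping: the pair $(\mu_{\max},d(A(\cdot)))$ decreases lexicographically, and equality forces $0\to T_\delta(Y/A)\to Y'\to A\to0$ to split. The gap is the step you flag yourself, and it is the heart of the lemma: nothing in the proposal produces a radius modification whose reversed extension is nonsplit.

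Your first construction cannot. For $\delta$ below the margin, every $A\to B$ entry stays strictly below the bound, so $Y'=A\oplus T_\delta(Y/A)$ and the pair does not move. The only admissible nontrivial choice is $\delta=-\ln q$ with $q=\max_e e^{-b_e}\|\gamma_e\|$; choosing ``a single entry'' with a larger defect violates \fulleqref{eq:labelledbound}. So $\delta$ is not a free parameter.

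Your second plan fails even at that radius. A surviving nonzero leading block $\bar z$ from $A$ to $T_\delta(Y/A)$ can be a graded coboundary $\bar d_e\bar c_{s(e)}-\bar c_{t(e)}\bar a_e$, and then the extension splits. \namedref{Remark}{rem:nonsplitblock} is a counterexample to exactly this plan. Over $\mathbf Q_\ell$, with the lift $A_s=K(e_2+\pi e_1)$, the residual of norm $|\pi|$ has leading term $\pi e_1$. That is the coboundary of $e_2+\pi e_1\mapsto\pi e_1$, so the rescaled reduction is split and the pair stays $(1,1)$.

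\textbf{The missing idea is a normalized shear of the lift.} It rests on a fact you never use. By \namedref{Lemma}{lem:maximalslope}, $\operatorname{Hom}_{\mathcal A_b}(A_0,T_\delta B_0)=0$ for every real $\delta$: an image would be a quotient of $A_0$ of slope at least $\mu_0$ and also a subobject of $B_0$ of slope less than $\mu_0$.

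This Hom space is, degree by degree, the kernel of the graded coboundary on $C^0=\prod_v\operatorname{Hom}_K(A_v,B_v)$. Hence $D(c)_e=d_ec_{s(e)}-c_{t(e)}a_e$ is an isometric embedding into the $b$-normalized $C^1$. \namedref{Lemma}{lem:adapted} then gives a contractive left inverse $P$.

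The paper takes the shear $c$, with $\|c\|_0\leq q$, to be the Banach fixed point of $c\mapsto-P\gamma+PQ(c)$, where $Q(c)_e=c_{t(e)}\beta_ec_{s(e)}$. The sheared residual $z=\gamma+D(c)-Q(c)$ then satisfies $Pz=0$. Semistability of $X$ gives $z\neq0$. At $r=\|z\|_{1,b}$ one has $\overline P\,\overline D=\mathrm{id}$ and $\overline P\bar z=0$, so $\bar z\notin\operatorname{im}\overline D$, which is precisely nonsplitness.

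Your first plan points at the right freedom, but its dichotomy is unjustified. If every choice splits, you can indeed strictly lower the residual radius by shearing with a lift of the splitting map. Over a non-spherically-complete $K$, however, a sequence of such improvements can stall at a positive radius while the cumulative shears fail to converge. So ``pass to the limit to get a subrepresentation'' presupposes exactly the attainment statement that needs proof. The isometry of $D$ is what guarantees an optimal shear. It also reduces the only limit in the argument to a contraction on the complete finite-dimensional space $C^0$.
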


\begin{proof}
Suppose first that $\gr_p^bX$ is $\kappa$-semistable, and let $U\subseteq X$ be a
subrepresentation.  \namedref{Lemma}{lem:adapted} makes the restricted norm on
$U$ split, and $\gr_{p|_U}^bU$ is a graded subrepresentation of $\gr_p^bX$.
Consequently
\[
 \kappa(U)=\kappa(\gr_{p|_U}^bU)\leq0.
\]

Conversely, suppose that $X$ is $\kappa$-semistable and choose a split
$b$-bounded norm $p$. The argument below is a simultaneous refinement, for
arbitrary real labels, of the modification used in
\cite[Proposition~7.4]{HKKP}.
We first adjust the orthogonal lifts by a shear so that rescaling at the
residual radius produces a nonsplit reversed extension. We then compare
maximal-slope subobjects to prove termination.
If $Y:=\gr_p^bX$ is unstable, let $A_0=A(Y)\subseteq Y$ be its
maximal-slope subobject, put $B_0=Y/A_0$, and write
$\mu_0=\mu(A_0)$. Since $\kappa(Y)=0$ and $Y$ is unstable,
$\mu_0>0$ and $\kappa(A_0)>0$. In particular, both $A_0$ and $B_0$
are nonzero. By \namedref{Lemma}{lem:maximalslope}, $A_0$ is
slope-semistable and every nonzero subobject of $B_0$ has slope less than
$\mu_0$.

Choose homogeneous complements to $(A_0)_v$ in $Y_v$ and lift the resulting
homogeneous $\mathcal R$-bases using
\namedref{Lemma}{lem:realgradedlinearalgebra}. This gives orthogonal
decompositions
\[
 X_v=A_v\oplus B_v
\]
whose graded reductions identify with $A_0$ and $B_0$, respectively.
Write the arrow maps in these decompositions as
\[
 \rho_X(e)=
 \begin{pmatrix}
  a_e&\beta_e\\
  \gamma_e&d_e
 \end{pmatrix}.
\]
Thus
\[
 \begin{gathered}
 a_e:A_{s(e)}\longrightarrow A_{t(e)},\qquad
 \beta_e:B_{s(e)}\longrightarrow A_{t(e)},\\
 \gamma_e:A_{s(e)}\longrightarrow B_{t(e)},\qquad
 d_e:B_{s(e)}\longrightarrow B_{t(e)}.
 \end{gathered}
\]
Each block has operator norm at most $e^{b_e}$.  Since $A_0$ is a graded
subrepresentation, the degree-$b_e$ associated-graded component of every
$\gamma_e$ vanishes. Write $\gamma=(\gamma_e)_{e\in\mathscr Q_1}$.
If $\gamma=0$, then $(A_v)_v$ is a $K$-subrepresentation of $X$ with
$\kappa(A)=\kappa(A_0)>0$, contradicting the semistability of $X$.
Therefore $\gamma\ne0$, and
\[
 0<q:=\max_{e\in\mathscr Q_1}
       e^{-b_e}\|\gamma_e\|_{A_{s(e)}\to B_{t(e)}}<1.
\]

Consider the finite-dimensional split normed spaces over $K$
\[
 \begin{aligned}
 C^0&=\prod_{v\in\mathscr Q_0}\operatorname{Hom}_K(A_v,B_v),&
 \|c\|_0&=\max_v\|c_v\|_{A_v\to B_v},\\
 C^1&=\prod_{e\in\mathscr Q_1}
          \operatorname{Hom}_K(A_{s(e)},B_{t(e)}),&
 \|u\|_{1,b}&=\max_e e^{-b_e}
          \|u_e\|_{A_{s(e)}\to B_{t(e)}}.
 \end{aligned}
\]
Thus $\gamma\in C^1$.
Define the coboundary map
\[
 \begin{aligned}
 D:C^0&\longrightarrow C^1,\\
 c&\longmapsto
 \bigl(d_ec_{s(e)}-c_{t(e)}a_e\bigr)_{e\in\mathscr Q_1}.
 \end{aligned}
\]
Give $C^0$ its operator-norm filtration and the $e$-summand of $C^1$ the
filtration defined by $e^{-b_e}\|\cdot\|$, retaining all real degrees.
In the lifted homogeneous bases,
a matrix unit from an $A$-basis vector of grade $\alpha$ to a $B$-basis
vector of grade $\beta$ has degree $\beta-\alpha$ in $C^0$, and degree
$\beta-\alpha-b_e$ in the $e$-summand of $C^1$. Consequently,
by \fulleqref{eq:gradedhomidentification}, for every $\eta\in\R$,
\[
 \begin{aligned}
 (\gr^{\R} C^0)_\eta
 &\simeq\bigoplus_v\operatorname{Hom}_{\mathcal R}^\eta((A_0)_v,(B_0)_v),\\
 (\gr^{\R} C^1)_\eta
 &\simeq\bigoplus_e\operatorname{Hom}_{\mathcal R}^{\eta+b_e}
                ((A_0)_{s(e)},(B_0)_{t(e)}).
 \end{aligned}
\]
In particular, degree $-\delta$ in the first line is the space of
degree-zero vertexwise maps $A_0\to T_\delta B_0$, while degree
$-\delta$ in the second is the space of degree-$b_e$ arrowwise maps
to $T_\delta B_0$. If $\bar a_e,\bar d_e$ denote the arrow maps of $A_0,B_0$,
then
\[
 (\gr^{\R}_{-\delta}D)(\bar c)_e
 =\bar d_e\bar c_{s(e)}-\bar c_{t(e)}\bar a_e.
\]
Hence
\[
 \ker(\gr^{\R}_{-\delta}D)
 =\operatorname{Hom}_{\mathcal A_b}(A_0,T_\delta B_0).
\]
Grade translation preserves slopes. A nonzero morphism
$A_0\to T_\delta B_0$ would have an image that is both a nonzero quotient
of $A_0$, of slope at least $\mu_0$, and a nonzero subobject of
$T_\delta B_0$, of slope strictly less than $\mu_0$. Thus
\[
 \operatorname{Hom}_{\mathcal A_b}(A_0,T_\delta B_0)=\{0\}.
\]

The block bounds give $\|D(c)\|_{1,b}\leq\|c\|_0$. If this inequality were
strict for some nonzero $c$, put $\delta=-\ln\|c\|_0$. Since $C^0$ is
split, $c$ has a nonzero initial form
$\bar c\in(\gr^{\R} C^0)_{-\delta}$, while strictness would give
$(\gr^{\R}_{-\delta}D)(\bar c)=0$, contradicting the preceding vanishing. Thus
\begin{equation}\label{eq:coboundaryisometry}
 \|D(c)\|_{1,b}=\|c\|_0
 \qquad\text{for all }c\in C^0.
\end{equation}

By \fulleqref{eq:coboundaryisometry}, $D$ is an isometric embedding.  The
chosen orthogonal bases make $C^0$ and $C^1$ split, and
\namedref{Lemma}{lem:adapted} gives an orthogonal complement to
$\operatorname{im}D$ in $C^1$.  Projection along this complement, followed
by $D^{-1}$, gives a $K$-linear map
\[
 P:C^1\longrightarrow C^0,
 \qquad \|P\|\leq1,
 \qquad PD=\operatorname{id}_{C^0}.
\]

Define
\[
 \begin{aligned}
 Q:C^0&\longrightarrow C^1,\\
 c&\longmapsto
 \bigl(c_{t(e)}\beta_ec_{s(e)}\bigr)_{e\in\mathscr Q_1}.
 \end{aligned}
\]
Since $\max_{e\in\mathscr Q_1} e^{-b_e}\|\beta_e\|\leq1$, one has
\[
 \|Q(c)\|_{1,b}\leq\|c\|_0^2
\]
and
\[
 \|Q(c)-Q(c')\|_{1,b}
 \leq
 \max\{\|c\|_0,\|c'\|_0\}\,\|c-c'\|_0.
\]
It follows that the map
\[
 \Phi:C^0\longrightarrow C^0,
 \qquad c\longmapsto-P\gamma+PQ(c),
\]
preserves the complete ball
\[
 \mathbb B_q=\{c\in C^0\mid\|c\|_0\leq q\}
\]
and is $q$-Lipschitz there, with $q<1$.  The Banach fixed-point theorem
therefore gives a unique fixed point $c\in\mathbb B_q$.

For each vertex define the automorphism
\[
 S_v:A_v\oplus B_v\longrightarrow A_v\oplus B_v,
 \qquad
 S_v=
 \begin{pmatrix}
  \operatorname{id}_{A_v}&0\\
  c_v&\operatorname{id}_{B_v}
 \end{pmatrix}.
\]
Since $\|c_v\|<1$, both $S_v$ and $S_v^{-1}$ are isometries.  In the new
orthogonal decompositions,
\[
 S_{t(e)}^{-1}\rho_X(e)S_{s(e)}
 =
 \begin{pmatrix}
  a_e+\beta_ec_{s(e)}&\beta_e\\
  z_e&d_e-c_{t(e)}\beta_e
 \end{pmatrix},
\]
where
\[
 z_e=\gamma_e+D(c)_e-c_{t(e)}\beta_ec_{s(e)}.
\]
Since $PD=\operatorname{id}_{C^0}$, the fixed-point equation gives
\begin{equation}\label{eq:residualslice}
 Pz=P\gamma+PD(c)-PQ(c)=P\gamma+c-PQ(c)=0.
\end{equation}
Moreover
\[
 \|z\|_{1,b}
 \leq\max\{\|\gamma\|_{1,b},\|D(c)\|_{1,b},\|Q(c)\|_{1,b}\}
 \leq\max\{q,q,q^2\}=q.
\]
If $z=0$, the subspaces $S_v(A_v\oplus\{0\})$ form a $K$-subrepresentation of
$X$ of weight $\kappa(A_0)>0$, contradicting semistability. Hence
\[
 0<r:=\|z\|_{1,b}\leq q<1.
\]

Put $\lambda=-\ln r>0$, write
$p_{A_v}=p_v|_{A_v}$ and $p_{B_v}=p_v|_{B_v}$, and, for each
$v\in\mathscr Q_0$, define a new split norm by
\[
 p'_v\bigl(S_v(x,y)\bigr)
 =\max\{p_{A_v}(x),r^{-1}p_{B_v}(y)\},
 \qquad\text{for all }(x,y)\in A_v\oplus B_v.
\]
Put
\[
 a'_e=a_e+\beta_ec_{s(e)},\qquad
 d'_e=d_e-c_{t(e)}\beta_e.
\]
Because $\|c\|_0<1$, the perturbations $\beta_ec_{s(e)}$ and
$c_{t(e)}\beta_e$ have normalized norm strictly less than one. With respect
to $p'$,
\[
 e^{-b_e}\|a'_e\|_{p_{A_{s(e)}}\to p_{A_{t(e)}}}\leq1,\qquad
 e^{-b_e}\|d'_e\|_{p_{B_{s(e)}}\to p_{B_{t(e)}}}\leq1,
\]
\[
 e^{-b_e}\|\beta_e\|_{r^{-1}p_{B_{s(e)}}\to p_{A_{t(e)}}}\leq r<1,
 \qquad
 e^{-b_e}\|z_e\|_{p_{A_{s(e)}}\to r^{-1}p_{B_{t(e)}}}\leq1.
\]
Thus $p'$ is $b$-bounded. Scaling each $B_v$-norm by
$r^{-1}=e^\lambda$ replaces $B_0$ by $T_\lambda B_0$. The $\beta_e$-blocks
have zero degree-$b_e$ reduction, the diagonal reductions remain
$\bar a_e$ and the canonically reindexed $\bar d_e$, and at least one
$\bar z_e$ is nonzero because the
finite weighted maximum defining $r$ is attained. Write
$\bar z=(\bar z_e)_e\in(\gr^{\R} C^1)_{-\lambda}$. By the preceding
Hom identifications, these are exactly the degree-$b_e$ maps from $A_0$
to $T_\lambda B_0$. Hence, relative to
$A_0\oplus T_\lambda B_0$,
\[
 \gr_{p'}^b\rho_X(e)=
 \begin{pmatrix}
  \bar a_e&0\\
  \bar z_e&\bar d_e
 \end{pmatrix},
\]
and $Y':=\gr_{p'}^bX$ fits into
\begin{equation}\label{eq:reversedradiusextension}
 0\longrightarrow T_\lambda B_0
 \longrightarrow Y'
 \longrightarrow A_0
 \longrightarrow0.
\end{equation}

Both $D$ and $P$ preserve their norm filtrations, so they induce
degree-zero maps of the real-graded modules. Functoriality of
associated grading carries $PD=\operatorname{id}$ to the same identity on
the associated graded modules. At degree $-\lambda=\ln r$, write these maps as
\[
 \overline D:(\gr^{\R} C^0)_{-\lambda}\longrightarrow(\gr^{\R} C^1)_{-\lambda},
 \qquad
 \overline P:(\gr^{\R} C^1)_{-\lambda}\longrightarrow(\gr^{\R} C^0)_{-\lambda},
\]
with
\[
 \overline P\,\overline D=\operatorname{id},
 \qquad
 \overline P(\bar z)=0
\]
by $PD=\operatorname{id}$ and \fulleqref{eq:residualslice}. Here $\overline P$ is a
map of the graded Hom spaces; it need not intertwine quiver arrows.
A splitting
of \fulleqref{eq:reversedradiusextension} would be the graph of a homogeneous
vertexwise map $\overline h:A_0\to T_\lambda B_0$. The intertwining
equations for that graph are
\[
 \bar z+\overline D(\overline h)=0.
\]
Applying $\overline P$ gives $\overline h=0$, hence $\bar z=0$,
contradicting the choice of $r$. Therefore the extension is nonsplit.

We now prove termination by an elementary slope comparison. Write
$B'=T_\lambda B_0$. For a nonzero subobject $C\subseteq Y'$, let
$C_B=C\cap B'$ and let $I$ be the image of $C$ in $A_0$. The exact sequence
$0\to C_B\to C\to I\to0$ gives
\[
 \kappa(C)-\mu_0d(C)
 =\bigl(\kappa(C_B)-\mu_0d(C_B)\bigr)
  +\bigl(\kappa(I)-\mu_0d(I)\bigr)\leq0.
\]
The first summand is strictly negative if $C_B\ne\{0\}$; the second is
nonpositive because $I\subseteq A_0$. Consequently
$\mu_{\max}(Y')\leq\mu_0$. If equality holds, apply this calculation to
$C=A(Y')$. It forces $C_B=\{0\}$, so $C$ injects into $A_0$ and
$d(C)\leq d(A_0)$. Equality of these ranks would make the image all of
$A_0$: the quotient would have rank zero. The inverse of $C\to A_0$,
followed by $C\subseteq Y'$, would then split
\fulleqref{eq:reversedradiusextension}, a contradiction. Hence either
\[
 \mu_{\max}(Y')<\mu_{\max}(Y),\qquad\text{or}\qquad
 \mu_{\max}(Y')=\mu_{\max}(Y)\ \text{and}\ d(A(Y'))<d(A(Y)).
\]
Thus the pair $(\mu_{\max}(Y),d(A(Y)))$ strictly decreases in lexicographic
order at every nonterminal modification. Its possible values form a
finite set: the vertex graded-rank vector of $A(Y)$ lies in
\[
 \prod_{v\in\mathscr Q_0}\{0,1,\ldots,\dim X_v\},
\]
and both its slope and total rank depend only on that nonzero vector,
not on the real grades. This remains a finite set for arbitrary real
weights $\kappa_v$. Iteration therefore
terminates at a split $b$-bounded norm whose radius-labeled graded
reduction is $\kappa$-semistable.
\end{proof}

\begin{remark}\label{rem:radiusboundary}
\namedref{Lemma}{lem:radiuslangton} imposes no compatibility condition between
the real labels $b_e$ and the value group of $K$.  The proof rescales a
split norm by $r^{-1}\in\R_{>0}$; it never requires
$r\in|K^\times|$.  The argument also applies to loops and multiple arrows.
\end{remark}

\begin{remark}\label{rem:nonsplitblock}
The shear normalization in \fulleqref{eq:residualslice} is essential:
a nonzero lower-left residue block can be a coboundary and hence represent
a split extension. For example, take $K=\mathbf Q_\ell$ for a prime $\ell$,
with uniformizer $\pi=\ell$, and consider the identity arrow
$X_s=X_t=K^2$ with $b=0$ and
$\kappa=(1,-1)$. Give $X_s$ its standard norm and $X_t$ the norm
\[
 p_t(xe_1+ye_2)=\max\{|x|,|\pi|^2|y|\}.
\]
The representation is $\kappa$-semistable, while its reduction has
maximal-slope subobject $A_0$ equal to the source line $\bar e_2$ with zero target.
Choose the orthogonal lifts
\[
 A_s=K(e_2+\pi e_1),\quad B_s=Ke_1,\qquad
 A_t=0,\quad B_t=K^2.
\]
The lower-left block has norm $|\pi|$. Rescaling all $B$-norms by
$|\pi|^{-1}$ makes its leading term $\pi e_1$ nonzero, but this term is
the coboundary of the vertexwise map sending $e_2+\pi e_1$ to $\pi e_1$.
The reversed extension therefore splits. Explicitly,
$e_2=(e_2+\pi e_1)-\pi e_1$ has new source norm $1$ and new target norm
$|\pi|<1$, so its initial source line still has zero arrow. The maximal
slope and the total rank of its maximal-slope subobject remain $(1,1)$.

In the coordinates above, projection onto the $e_1$ component gives
$P\gamma=\pi$ and $\beta=0$. The fixed point is $c=-\pi$, which replaces
the lift by $Ke_2$. The corrected residual has norm $|\pi|^2$;
rescaling at this radius gives a nonsplit reversed extension and a
semistable reduction. Thus nonvanishing of the lower-left block in the
modification of \cite[Proposition~7.4]{HKKP} does not by itself imply
nonsplitting; the exact normalization
\fulleqref{eq:residualslice} ensures it here.
\end{remark}

\begin{definition}\label{def:labelleddetfunctional}
Choose a nonzero determinant vector in every $\det X_v$.  The corresponding
\emph{$\kappa$-weighted determinant functional} is
\[
 J_\kappa:\prod_{v\in\mathscr Q_0}\Nall(X_v)\longrightarrow\R,
 \qquad
 p\longmapsto\sum_{v\in\mathscr Q_0}\kappa_v\ell_{X_v}(p_v).
\]
\end{definition}

\begin{lemma}
\label{lem:labelleddetminimum}
Let $p$ be a split $b$-bounded norm for which $\gr_p^bX$ is
$\kappa$-semistable. Then
\[
 J_\kappa(q)\geq J_\kappa(p)
\]
for every, possibly nonsplit, $b$-bounded norm $q$.
\end{lemma}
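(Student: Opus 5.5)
The plan is to prove the inequality first for split $b$-bounded $q$ by a one-step first-variation comparison in a common apartment, and then to pass to nonsplit $q$ by approximation.

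\emph{Split case.} By \namedref{Lemma}{lem:adapted} and the common-apartment property, choose for each vertex $v$ a basis of $X_v$ orthogonal for both $p_v$ and $q_v$. Write $q_v=h_{\mathbf e_v,\mathbf u_v+\boldsymbol\beta_v}$ where $p_v=h_{\mathbf e_v,\mathbf u_v}$. Then $J_\kappa$ is affine along the segment $p_t$, so
\[
 J_\kappa(q)-J_\kappa(p)=\sum_v\kappa_v\sum_j\beta_{v,j}.
\]
Let $\mathcal G$ be the increasing direction filtration on the vertex modules of $Y=\gr_p^bX$ that these weights define, as in \namedref{Lemma}{lem:splitfirstvariation}. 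With jumps $\beta_1<\cdots<\beta_m$, summation by parts gives
\[
 J_\kappa(q)-J_\kappa(p)=\beta_m\kappa(Y)-\sum_{j<m}(\beta_{j+1}-\beta_j)\,\kappa(\mathcal G_{\leq\beta_j}Y).
\]
The key claim is that each $\mathcal G_{\leq\beta}Y$ is a graded subrepresentation of $Y$. To see this, use \fulleqref{eq:weightedmatrixnorm}: $b$-boundedness of $q$ says that every matrix entry of $\rho_X(e)$ satisfies $\ln|c|+u_{t,k}+\beta_{t,k}-u_{s,j}-\beta_{s,j}\leq b_e$. The entries that survive in $\gr_p^{b_e}\rho_X(e)$ are exactly those with $\ln|c|+u_{t,k}-u_{s,j}=b_e$, and for these the inequality gives $\beta_{t,k}\leq\beta_{s,j}$. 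Hence the graded arrows preserve $\mathcal G_{\leq\beta}$. Since $\kappa(Y)=\kappa(X)=0$ and $\kappa$-semistability gives $\kappa(\mathcal G_{\leq\beta_j}Y)\leq0$, the displayed difference is nonnegative.

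\emph{Nonsplit case.} Choose split $q^{(n)}\to q$ with $e^{-\varepsilon_n}q\leq q^{(n)}\leq e^{\varepsilon_n}q$ and $\varepsilon_n\to0$. Then $q^{(n)}$ is only $(b+2\varepsilon_n)$-bounded. I would redo the split computation for $q^{(n)}$: the same entry comparison now gives $\beta_{t,k}\leq\beta_{s,j}+2\varepsilon_n$ for the surviving entries. So the graded arrows send $\mathcal G_{\leq\beta}$ into $\mathcal G_{\leq\beta+2\varepsilon_n}$, i.e. the direction filtration is only $2\varepsilon_n$-quasi-invariant. The aim is then an estimate
\[
 J_\kappa(q^{(n)})-J_\kappa(p)\geq-C\varepsilon_n,
\]
with $C$ depending only on $\sum_v\dim X_v$ and $\max_v|\kappa_v|$. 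Letting $n\to\infty$ and using continuity of determinant norms (\namedref{Lemma}{lem:allnormprofile}) then gives $J_\kappa(q)\geq J_\kappa(p)$.

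To obtain the estimate, first cluster the finitely many weights: merge consecutive jumps whose gap is at most $2\varepsilon_n$ into a single block. For this I would check that at most $N=\sum_v\dim X_v$ jumps occur, so each block has width at most $2N\varepsilon_n$. Next, replace every weight by the smallest weight in its block; this changes $\sum\kappa_v\beta_{v,j}$ by at most $2N^2\max_v|\kappa_v|\,\varepsilon_n$. Finally, after the collapse every arrow maps $\mathcal G'_{\leq\beta}$ into $\mathcal G'_{\leq\beta}$. The reason is that a surviving entry raising the weight by at most $2\varepsilon_n$ must either stay in its block or move to a lower one, because gaps between blocks exceed $2\varepsilon_n$. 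The split-case summation by parts then applies to the collapsed weights and gives nonnegativity.

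I expect this quasi-invariance step to be the main obstacle: the clustering must be set up carefully so that the collapsed filtration is genuinely invariant, while the total perturbation remains $O(\varepsilon_n)$.

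An alternative I would try first, if it is cleaner, avoids approximation entirely. Since $s_1$ is geodesically convex (\namedref{Lemma}{lem:allnormconvexity}), the whole $d_2$-geodesic from $p$ to $q$ in $\Nall(X)$ is $b$-bounded. Then one would hope to apply the split argument to its initial segment. This, however, still requires split points near $p$ on the segment, so the approximation argument above is the fallback.
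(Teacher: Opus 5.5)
Your proof is correct, and your split case is the paper's argument: a common orthogonal basis, the inequality $\lambda_{t(e),k}\leq\lambda_{s(e),j}$ on entries surviving in the degree-$b_e$ reduction, invariance of the resulting filtration, and summation by parts. The two routes differ only for nonsplit $q$.

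\emph{The paper's route.} It passes to a maximally complete immediate extension $L/K$, over which every norm is split. It checks three things: immediacy identifies the real-graded residue fields, so $\gr^b_{p_L}X_L\simeq\gr^b_pX$ stays $\kappa$-semistable; $q_L$ is still $b$-bounded; and determinant norms commute with scalar extension. It then applies the split case over $L$ verbatim.

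\emph{Your route.} You stay over $K$. Split approximants with $d_\infty(q^{(n)},q)\leq\varepsilon_n$ are $(b+2\varepsilon_n)$-bounded, so surviving entries give only $\lambda_{t,k}\leq\lambda_{s,j}+2\varepsilon_n$, and clustering restores exact invariance. The step you flagged as the main obstacle goes through as you set it up. Collapsing to the block minimum is monotone, which handles the case $\lambda_{t,k}\leq\lambda_{s,j}$. If instead $\lambda_{s,j}<\lambda_{t,k}\leq\lambda_{s,j}+2\varepsilon_n$, every consecutive gap between these two values is at most $2\varepsilon_n$, so both lie in one block and collapse to the same value. With at most $N$ distinct weights, each block has width at most $2N\varepsilon_n$. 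This gives
\[
 J_\kappa(q^{(n)})-J_\kappa(p)\geq-2N^2\max_v|\kappa_v|\,\varepsilon_n .
\]
The limit follows from $|\ell_{X_v}(q^{(n)}_v)-\ell_{X_v}(q_v)|\leq(\dim X_v)\,\varepsilon_n$, which is the top-degree case of the exterior-norm comparison.

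\emph{Comparison.} The paper's route is shorter once its external inputs are granted: existence of maximally complete immediate extensions, and compatibility of graded reduction and determinant norms with scalar extension. It reduces the nonsplit case to the split one without any estimate. Yours uses only density of split norms and common apartments, keeping everything over $K$. It also yields a quantitative robustness statement that the base-change argument does not: every split $(b+\delta)$-bounded norm $q'$ satisfies
\[
 J_\kappa(q')\geq J_\kappa(p)-N^2\max_v|\kappa_v|\,\delta .
\]
The geodesic alternative you mention is not needed.
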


\begin{proof}
If $X$ is zero, the assertion is immediate. First suppose that $q$ is
split. At each vertex choose a basis orthogonal for both $p_v$ and $q_v$
\cite[Proposition~1.14]{BE}, and write
\[
 q_v(e_{v,j})=e^{\lambda_{v,j}}p_v(e_{v,j}).
\]
In $p$-orthogonal coordinates, the degree-$b_e$ associated-graded map
retains precisely the entries whose normalized weighted size is one.
Thus, if a matrix coefficient contributes to that residue map, it attains
the arrow bound:
\[
 |(\rho_X(e))_{kj}|p_{t(e)}(e_{t(e),k})
 =e^{b_e}p_{s(e)}(e_{s(e),j}).
\]
Orthogonality and feasibility at $q$ give the corresponding weak
inequality after inserting $e^{\lambda_{t(e),k}}$ and
$e^{\lambda_{s(e),j}}$; cancellation yields
$\lambda_{t(e),k}\leq\lambda_{s(e),j}$. Consequently, for every
$u\in\R$, the graded $\mathcal R$-spans of the initial basis vectors with
$\lambda_{v,j}\leq u$ form a graded subrepresentation $F_{\leq u}$ of
$\gr_p^bX$. Their vertex graded ranks count precisely those selected basis
vectors. Since determinant weights add in the chosen product apartment,
\[
 J_\kappa(q)-J_\kappa(p)
 =\sum_{v,j}\kappa_v\lambda_{v,j}.
\]
If $\eta_1<\cdots<\eta_m$ are the distinct $\lambda_{v,j}$, then
$\kappa(X)=0$ gives the finite-sum identity
\[
 \sum_{v,j}\kappa_v\lambda_{v,j}
 =-\sum_{k=1}^{m-1}(\eta_{k+1}-\eta_k)
       \kappa(F_{\leq\eta_k}).
\]
Equivalently, the integrand below is zero outside the finite range of the
$\lambda_{v,j}$ and
\[
 J_\kappa(q)-J_\kappa(p)
 =-\int_\R\kappa(F_{\leq u})\,du\geq0.
\]

Now let $q$ be arbitrary. Recall that an extension of valued fields is
\emph{immediate} if it induces isomorphisms of value groups and residue
fields, and that a valued field is \emph{maximally complete} if it has no
proper immediate valued-field extension. For valued fields, maximal
completeness is equivalent to spherical completeness
\cite[Section~5, remark following Theorem~1]{Poonen}.
Choose a maximally complete
immediate extension $L/K$. Such an extension exists and is complete
\cite[Corollary~6 and Proposition~6(1)]{Poonen}; it is spherically complete,
and every finite-dimensional norm over $L$ is split
\cite[Proposition~6.9]{HKKP}.

Immediacy identifies the real-graded residue fields
$\mathcal R_K\simeq\mathcal R_L$: for $z\in L^\times$, choose
$a\in K^\times$ with $|a|=|z|$ and $u\in K^\circ$ whose residue equals
that of $z/a$; then $|z-au|<|z|$, so $au$ represents the same homogeneous
class as $z$. For every vertex, inclusion induces a
grade-preserving isomorphism
\[
 M_{p_v}(X_v)\xrightarrow{\sim}
 M_{(p_v)_L}(X_v\otimes_KL).
\]
Indeed, a $p_v$-orthogonal $K$-basis remains orthogonal with the same
weights over $L$ \cite[Proposition~1.25(iv)]{BE}, and its initial forms
give homogeneous bases on both sides over the identified graded residue
fields. The inclusions commute with the arrow maps and their
degree-$b_e$ reductions. Thus $\gr_{p_L}^bX_L$ identifies with
$\gr_p^bX$, and remains $\kappa$-semistable.

Feasibility also passes to scalar extension. For every decomposition
$w=\sum_j a_jx_j$, with $a_j\in L$ and $x_j\in X_{s(e)}$, the
projective scalar-extension norms satisfy
\[
 (q_{t(e)})_L(\rho_X(e)_Lw)
 \leq \max_j |a_j|q_{t(e)}(\rho_X(e)x_j)
 \leq e^{b_e}\max_j |a_j|q_{s(e)}(x_j).
\]
Taking the infimum over all such decompositions proves the required
$b$-bound for $q_L$; the same applies to $p_L$.
Both $p_L$ and $q_L$ are split, so the first part of the proof over $L$
gives
\[
 J_{\kappa,L}(q_L)\geq J_{\kappa,L}(p_L),
\]
where the determinant vectors are transported from $K$.
Determinant norms commute with scalar extension for all norms over a
non-Archimedean field \cite[Corollary~2.10]{BE}. Therefore
\[
 J_{\kappa,L}(q_L)=J_\kappa(q),\qquad
 J_{\kappa,L}(p_L)=J_\kappa(p),
\]
which proves the assertion.
\end{proof}

\section{Supporting hinges and split minimizers}\label{sec:hinges}

The next two lemmas convert the HKKP potential into the determinant
functional of \namedref{Section}{sec:radiusreduction}. They yield split
attainment whenever the extended potential has a global minimizer.
Combined with \namedref{Theorem}{thm:universalvariational}, this gives a
split harmonic norm for every polystable representation over every
complete non-Archimedean field.

\begin{definition}\label{def:maxenvelope}
For finite-dimensional $K$-vector spaces $V,W$, a $K$-linear map
$T:V\to W$, and $c\in\R$, define
\[
 \begin{aligned}
 \operatorname{Env}_{T,c}:\Nall(V)\times\Nall(W)&\longrightarrow\Nall(V),\\
 (p,q)&\longmapsto p\vee e^{-c}T^*q,
 \end{aligned}
\]
where
\[
 (p\vee e^{-c}T^*q)(v)=\max\{p(v),e^{-c}q(Tv)\},
 \qquad v\in V.
\]
\end{definition}

\begin{lemma}\label{lem:hingedeterminant}
Let $V,W$ be finite-dimensional $K$-vector spaces, fix
$0\ne\omega_V\in\det V$, let $T:V\to W$ be a $K$-linear map of rank $r$,
and let $p\in\Nall(V)$ and $q\in\Nall(W)$.
Then, for every $c\in\R$,
\begin{equation}\label{eq:hingedeterminant}
 \ell_V(p\vee e^{-c}T^*q)-\ell_V(p)
 =\sum_{j=1}^r\bigl(\sigma_j(T;p,q)-c\bigr)_+,
\end{equation}
where $(x)_+:=\max\{x,0\}$.
If $p$ and $q$ are split, then $p\vee e^{-c}T^*q$ is split.
\end{lemma}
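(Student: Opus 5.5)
The plan is to reduce to the split case by a common-apartment computation, and then pass to arbitrary norms by density and continuity.

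\textbf{Split case.} Assume $p$ and $q$ are split. By \namedref{Lemma}{lem:monomialcertificate}, applied in orthogonal bases, there are weighted isometries after which $T$ is monomial. Concretely, there are $p$-orthogonal source vectors $e_1,\ldots,e_n$ and $q$-orthogonal target vectors $f_1,\ldots,f_m$ with $Te_j=d_jf_j$ for $j\le r$ and $Te_j=0$ for $j>r$. The singular values are then $\sigma_j=\ln|d_j|+\ln q(f_j)-\ln p(e_j)$. For $v=\sum a_je_j$, orthogonality of the $f_j$ gives
\[
 e^{-c}q(Tv)=\max_{j\le r}|a_j|\,e^{\sigma_j-c}p(e_j).
\]
Hence
\[
 (p\vee e^{-c}T^*q)(v)=\max_j|a_j|\,p(e_j)\,e^{(\sigma_j-c)_+},
\]
with the convention $\sigma_j=-\infty$ for $j>r$. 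So the envelope is the split norm in the apartment of $(e_j)$ with weights $\ln p(e_j)+(\sigma_j-c)_+$. This proves splitness. The determinant norm of a split norm is the product of its orthogonal weights times $|\det|$ of the change of basis to $\omega_V$, so the difference of $\ell_V$ is $\sum_j(\sigma_j-c)_+$, which is \fulleqref{eq:hingedeterminant}. The one point to check is that the monomial reduction of \namedref{Lemma}{lem:monomialcertificate} produces genuinely $p$-orthogonal and $q$-orthogonal bases. It does, because the weighted isometries $C_s$ and $C_t$ carry orthogonal bases to orthogonal bases with the same weights.

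\textbf{General case.} Both sides should be continuous in $(p,q)$ for $d_\infty$, and split norms are dense, so the identity will pass to all norms.
\begin{itemize}
\item For the right side, continuity follows from \namedref{Lemma}{lem:allnormprofile}: each $\sigma_j$ is $2j$-Lipschitz in $(\delta_s,\delta_t)$, and $x\mapsto(x-c)_+$ is $1$-Lipschitz.
\item For the left side, suppose $e^{-\delta}p\le p'\le e^{\delta}p$ and $e^{-\delta}q\le q'\le e^\delta q$. The pointwise maximum then gives $e^{-\delta}(p\vee e^{-c}T^*q)\le p'\vee e^{-c}T^*q'\le e^{\delta}(p\vee e^{-c}T^*q)$. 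So $\operatorname{Env}_{T,c}$ is $1$-Lipschitz for $d_\infty$.
\item The map $h\mapsto\ell_V(h)$ is continuous (indeed $\dim V$-Lipschitz for $d_\infty$) by the exterior-norm comparison in the proof of \namedref{Lemma}{lem:allnormprofile}, or by \cite[Lemma~2.3]{BE}.
\end{itemize}

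Now choose split $p^{(n)}\to p$ and $q^{(n)}\to q$; this is possible because $\Ndiag$ is dense in $\Nall$ for $d_\infty$. Apply the split identity to each pair and pass to the limit.

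\textbf{Main obstacle.} I expect the only real subtlety to be the envelope being a genuine norm when $p$ is nonsplit. This is immediate, since $p\vee(\cdot)\ge p$ and the ultrametric and homogeneity properties hold pointwise. Beyond that, the delicate step is the bookkeeping in the monomial form, in particular the indices $j>r$ where the kernel contributes nothing.
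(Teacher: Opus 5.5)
Your proposal is correct and follows essentially the paper's proof. The paper likewise puts $T$ in monomial form in orthogonal bases (its ``weighted Gaussian elimination,'' i.e.\ \namedref{Lemma}{lem:monomialcertificate}), notes that the envelope is split in the same basis with weights multiplied by $e^{(\sigma_j-c)_+}$, takes determinants, and then passes to arbitrary $(p,q)$ using split density and the continuity of the envelope, of determinant norms, and of the extended singular profiles. The only addition in the paper is a second proof of splitness: it realizes $p\vee e^{-c}T^*q$ as the restriction of the split max norm on $V\oplus W$ along the graph $v\mapsto(v,Tv)$ and invokes \namedref{Lemma}{lem:adapted}; your diagonal argument is the one the paper gives first.
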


\begin{proof}
For split $p,q$, weighted Gaussian elimination gives orthogonal source and
target bases in which the nonzero part of $T$ is diagonal.  On the source
basis vectors belonging to the finite singular values, the maximum norm
multiplies the old weight by
$e^{(\sigma_j-c)_+}$ and leaves the kernel weights unchanged.  Taking
determinants proves~\fulleqref{eq:hingedeterminant}.  The same diagonal
description also proves splitness;
alternatively, the maximum norm is the restriction of the split maximum
norm on $V\oplus W$ along the graph embedding
$V\longrightarrow V\oplus W$, $v\longmapsto(v,Tv)$, so
\namedref{Lemma}{lem:adapted} applies.

For arbitrary $p,q$, approximate both uniformly by split norms.  The
maximum norms converge uniformly, determinant norms are continuous, and the
extended singular profiles are continuous by
\namedref{Lemma}{lem:allnormprofile}.  Passing to the limit proves the identity.
\end{proof}

\begin{lemma}\label{lem:supportinghinges}
Fix a finite quiver $Q$, a complete non-Archimedean field $K$,
a finite-dimensional representation $E=(E_i,\rho(a))$ of $Q$ over $K$, a
real stability parameter $\theta=(\theta_i)_{i\in Q_0}$, a weak ramp $f_a$
for each $a\in Q_1$, and nonzero determinant vectors in the spaces
$\det E_i$.  Let $S_E$ be the potential in~\fulleqref{eq:HKKPpotential}
determined by these data, and let $h\in\Nall(E)$ be a global minimizer of
$S_E$. For every arrow $a$, put $r_a=\rk\rho(a)$. If $r_a>0$, let
$d_{a,1}<\cdots<d_{a,m_a}$ be the distinct values among
$\sigma_{a,j}(h)$. There exist positive coefficients $\mu_{a,\nu}$ such
that the convex piecewise-linear function
\begin{equation}\label{eq:hingeexpansion}
 g_a(x)=\sum_{\nu=1}^{m_a}\mu_{a,\nu}(x-d_{a,\nu})_+
\end{equation}
satisfies
\[
 g'_{a,-}(d_{a,\nu})<f_a'(d_{a,\nu})<g'_{a,+}(d_{a,\nu})
 \qquad(1\leq\nu\leq m_a),
\]
where $g'_{a,-}$ and $g'_{a,+}$ denote the left and right derivatives.
Call such functions \emph{strict supporting hinges} for $f_a$ at the
singular profile of $h$.
If $r_a=0$, take $m_a=0$ and $g_a=0$.
For any such choice, the norm $h$ minimizes the function
\[
 \begin{aligned}
 G_E:\Nall(E)&\longrightarrow\R,\\
 k&\longmapsto\sum_{a\in Q_1}\sum_{j=1}^{r_a}
 g_a(\sigma_{a,j}(k))
 -\sum_{i\in Q_0}\theta_i\ell_{E_i}(k_i).
 \end{aligned}
\]
Every global minimizer $k$ of $G_E$
satisfies
\begin{equation}\label{eq:hingeprofileequality}
 \sigma_{a,j}(k)=\sigma_{a,j}(h)
 \qquad\text{for all }a\in Q_1,\ 1\leq j\leq r_a,
\end{equation}
and $S_E(k)=S_E(h)$.
\end{lemma}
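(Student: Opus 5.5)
The proof has three parts: the existence of strict supporting hinges, the minimality of $h$ for $G_E$, and the rigidity of the profile at every minimizer of $G_E$.

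\emph{Existence of hinges.} For $m_a\geq1$, choose the coefficients one breakpoint at a time. The slopes of $g_a$ are $0$ on $(-\infty,d_{a,1})$ and $\sum_{\nu\leq n}\mu_{a,\nu}$ on $(d_{a,n},d_{a,n+1})$. Since $f_a'$ is positive and strictly increasing, pick numbers
\[
0=s_0<f_a'(d_{a,1})<s_1<f_a'(d_{a,2})<s_2<\cdots<f_a'(d_{a,m_a})<s_{m_a}.
\]
Then set $\mu_{a,\nu}=s_\nu-s_{\nu-1}>0$. The strict inequalities at each $d_{a,\nu}$ follow because the left slope there is $s_{\nu-1}$ and the right slope is $s_\nu$.

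\emph{Minimality of $h$.} I would compare $G_E$ with $S_E$ through the convex function $\phi_a=f_a-g_a$. This function is not monotone, so direct comparison is awkward. I would instead use the first-order description along geodesics.

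The function $G_E$ is finite, continuous and geodesically convex on $\Nall(E)$. This follows from \namedref{Lemma}{lem:allnormconvexity} applied with the increasing convex $g_a$, together with the continuity in \namedref{Lemma}{lem:allnormprofile}. It therefore suffices to show that every canonical geodesic $\gamma$ from $h$ has $\frac{d}{dt}|_{0+}G_E(\gamma(t))\geq0$.

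Write the arrow terms as $\sum_j g_a(\sigma_{a,j})=\sum_n c_n s_{a,n}$ plus hinge corrections. Here the coefficients are grouped by the distinct values $d_{a,\nu}$ with cumulative multiplicities $n_\nu$, exactly as in the telescoping of \namedref{Lemma}{lem:splitfirstvariation}. Right derivatives of the $s_{a,n}$ exist by convexity.

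Within a block of equal singular values, the first-order change of $\sum_j f(\sigma_j)$ is $f'(d_\nu)$ times the change of $s_{n_\nu}-s_{n_{\nu-1}}$. This uses majorization within the block, since to first order the block's values spread but their sum moves by $s'_{n_\nu}-s'_{n_{\nu-1}}$. The corresponding hinge term has first-order change at least the left-slope/right-slope combination. More precisely, $\sum_{j\in\text{block}}g_a(d_\nu+\epsilon x_j)$ has derivative $\sum_j(g'_-\,x_j^-+g'_+\,x_j^+)$, which is at least $f'(d_\nu)\sum_jx_j$, because $g'_-<f'<g'_+$.

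Hence $(G_E\circ\gamma)'(0+)\geq(S_E\circ\gamma)'(0+)\geq0$, the last inequality by minimality of $h$. To make the derivative of each block rigorous I would first treat split geodesics, where everything is a finite max of affine functions. For general endpoints I would use one-sided directional derivatives of the convex functions $s_{a,n}$ along $\gamma$.

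\emph{Rigidity at other minimizers.} Let $k$ be any minimizer of $G_E$. The key identity is $\phi_a(x)=f_a(x)-g_a(x)$, which satisfies $\phi_a'(x)\geq \phi_a'$-type bounds. Combining the tangent-line inequality for $f_a$ at $d_{a,\nu}$ with the exact hinge structure gives, for each block, the pointwise bound
\[
g_a(x)-g_a(d_{a,\nu})\geq f_a'(d_{a,\nu})(x-d_{a,\nu})+\epsilon\,|x-d_{a,\nu}|
\]
locally near $d_{a,\nu}$, with strict slack. Convexity of $G_E$ makes $h$ and $k$ joined by a segment of minimizers. Along it, the derivative inequality above must be an equality.

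The slack term then forces each block's first-order spread to vanish, so the profile is constant to first order along the segment. Since the segment consists of minimizers, a convexity and rescaling argument upgrades this to the profile being constant along the whole segment, giving \eqref{eq:hingeprofileequality}. The determinant parts of $G_E$ and $S_E$ coincide, and the arrow parts then agree because the profiles agree. So $S_E(k)-S_E(h)=G_E(k)-G_E(h)=0$.

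I expect this rigidity step to be the main obstacle. I would first try to prove it by showing that $\sum_j g_a(\sigma_{a,j})$ and $\sum_j f_a(\sigma_{a,j})$ are both affine along the segment of minimizers, and then using strictness of the hinge slopes to exclude any motion of the profile.
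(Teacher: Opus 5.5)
\textbf{Verdict: there is a genuine gap in the rigidity step.} Your hinge construction is the same as the paper's, and your proof that $h$ minimizes $G_E$ is sound. Each $t\mapsto s_{a,n}(h_t)$ is convex and Lipschitz, so the profile has finite right derivatives $x_{a,j}$. The $f_a$-terms then have derivative $\sum_j f_a'(\sigma_{a,j}(h))x_{a,j}$; no majorization is needed, since $f_a$ is $C^1$. The hinge terms have derivative at least this plus $\epsilon\sum_j|x_{a,j}|$.

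The problem is rigidity. On the segment from $h$ to a minimizer $k$, this comparison only yields $x_{a,j}=0$: the profile is stationary to first order at $t=0$. That says nothing about $\sigma_{a,j}(k)$.
\begin{itemize}
\item On a split segment the $s_{a,n}(h_t)$ are finite maxima of affine functions of $t$. Such functions can have zero initial slope and still bend upward at an interior parameter.
\item On a nonsplit segment, a zero right derivative does not stop a convex function from leaving its initial value at higher order. In the intended application $h$ is typically nonsplit, so this case matters.
\item Re-running your argument at an interior point $h_t$ requires $h_t$ to minimize $S_E$, which is what you are trying to prove.
\item Your fallback, showing that $\sum_j f_a(\sigma_{a,j})$ is affine on $[h,k]$, has no basis, because nothing yet says $S_E$ is constant there.
\end{itemize}
The phrase ``convexity and rescaling argument'' does not fill this gap.

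\textbf{The missing idea} is to globalize the first-order condition through a convex linearization, as the paper does. Put $y_{a,j}=f_a'(\sigma_{a,j}(h))$ and
\[
 L_h(k)=\sum_{a,j}y_{a,j}\sigma_{a,j}(k)-\sum_i\theta_i\ell_{E_i}(k_i).
\]
Since $y_{a,1}\geq\cdots\geq y_{a,r_a}>0$, summation by parts writes $L_h$ as a nonnegative combination of the $s_{a,j}$ plus an affine term. Hence $L_h$ is geodesically convex.

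Along any geodesic from $h$, the right derivative of $L_h$ equals that of $S_E$, so it is nonnegative. Convexity then gives $L_h(k)\geq L_h(h)$ for every $k$.

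Your slack inequality $g_a(x)-g_a(d)\geq f_a'(d)(x-d)+\epsilon|x-d|$ holds globally, not just near $d_{a,\nu}$, by convexity of $g_a$. Here
\[
 \epsilon=\min_{a,\nu}\min\{g'_{a,+}(d_{a,\nu})-f'_a(d_{a,\nu}),\,f'_a(d_{a,\nu})-g'_{a,-}(d_{a,\nu})\}>0.
\]
Applying it to the index-matched pairs $\bigl(\sigma_{a,j}(k),\sigma_{a,j}(h)\bigr)$ gives
\[
 G_E(k)-G_E(h)\geq L_h(k)-L_h(h)+\epsilon\sum_{a,j}\bigl|\sigma_{a,j}(k)-\sigma_{a,j}(h)\bigr|.
\]
Both terms on the right are nonnegative. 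This yields minimality of $h$ and, at any minimizer $k$, equality of profiles in one step. Your final deduction of $S_E(k)=S_E(h)$ then goes through unchanged.
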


\begin{proof}
Put $y_{a,j}=f_a'(\sigma_{a,j}(h))$ and define
\[
 \begin{aligned}
 L_h:\Nall(E)&\longrightarrow\R,\\
 k&\longmapsto\sum_{a,j}y_{a,j}\sigma_{a,j}(k)
 -\sum_i\theta_i\ell_{E_i}(k_i).
 \end{aligned}
\]
For each arrow, $y_{a,1}\geq\cdots\geq y_{a,r_a}>0$.  With
$y_{a,r_a+1}=0$ and
$s_{a,j}=\sigma_{a,1}+\cdots+\sigma_{a,j}$,
\[
 \sum_{j=1}^{r_a}y_{a,j}\sigma_{a,j}
 =\sum_{j=1}^{r_a}(y_{a,j}-y_{a,j+1})s_{a,j}.
\]
By \namedref{Lemma}{lem:allnormconvexity}, the functions $s_{a,j}$ are
geodesically convex on the completed norm space; the vertex determinant
terms are affine, so $L_h$ is geodesically convex.

We claim that $h$ minimizes $L_h$.  Otherwise choose $k$ with
$L_h(k)<L_h(h)$ and let $h_t$ be the canonical geodesic from $h$ to $k$.
Convexity gives
\[
 L_h(h_t)-L_h(h)\leq t\bigl(L_h(k)-L_h(h)\bigr).
\]
The geodesic has $d_2(h_t,h)=O(t)$, hence also
$d_\infty(h_t,h)=O(t)$ by~\fulleqref{eq:d2dinftycomparison}.
\namedeqref{Equation}{eq:profilelipschitz}, applied to successive partial sums,
therefore gives
\[
 |\sigma_{a,j}(h_t)-\sigma_{a,j}(h)|=O(t)
\]
for every $a,j$, while each determinant coordinate is affine in $t$.
Taylor's formula for the $C^1$ ramps, summed over the finite set of singular
values, now gives
\[
 S_E(h_t)-S_E(h)=L_h(h_t)-L_h(h)+o(t),
\]
contradicting the minimality of $h$.

To prove existence of strict supporting hinges, fix an arrow $a$ with
$r_a>0$ and put $u_{a,\nu}=f_a'(d_{a,\nu})$. Since $f_a'$ is positive
and strictly increasing, we may choose real numbers
\[
 0=c_{a,0}<u_{a,1}<c_{a,1}<u_{a,2}<\cdots
 <c_{a,m_a-1}<u_{a,m_a}<c_{a,m_a}
\]
and put $\mu_{a,\nu}=c_{a,\nu}-c_{a,\nu-1}$. The resulting function
in~\fulleqref{eq:hingeexpansion} has
\[
 g'_{a,-}(d_{a,\nu})=c_{a,\nu-1},\qquad
 g'_{a,+}(d_{a,\nu})=c_{a,\nu},
\]
so it satisfies the required strict inequalities.

Now fix any choice of strict supporting hinges. For every arrow with
$r_a>0$, put $u_{a,\nu}=f_a'(d_{a,\nu})$. Convexity and the strict
left and right slope inequalities give
\begin{equation}\label{eq:stricthingesupport}
 g_a(x)\geq g_a(d_{a,\nu})
 +u_{a,\nu}(x-d_{a,\nu}),
\end{equation}
with equality if and only if $x=d_{a,\nu}$.
For an arrow $a$ with $r_a=0$, the singular profile is empty, and we take
$g_a=0$.

Summing~\fulleqref{eq:stricthingesupport} over all singular values gives
\begin{equation}\label{eq:GsupportL}
 G_E(k)-G_E(h)=L_h(k)-L_h(h)+D_h(k),
 \qquad D_h(k)\geq0,
\end{equation}
where $D_h(k)=0$ precisely when~\fulleqref{eq:hingeprofileequality} holds.
Thus $h$ minimizes $G_E$.  Equality at the minimum forces both terms on the
right of~\fulleqref{eq:GsupportL} to vanish.  The arrow singular profiles and
the $\theta$-weighted determinant sum are therefore unchanged, which gives
$S_E(k)=S_E(h)$.
\end{proof}

\begin{theorem}
\label{thm:splitattainment}
Fix a finite quiver $Q$, a complete non-Archimedean field $K$,
a finite-dimensional representation $E=(E_i,\rho(a))$ of $Q$ over $K$,
a real stability parameter $\theta=(\theta_i)_{i\in Q_0}$ satisfying
$\theta(E)=0$, weak ramps $f=(f_a)_{a\in Q_1}$, and nonzero determinant vectors
in $\det E_i$. Let $S_E$ be the resulting potential. If $h\in\Nall(E)$ is a
global minimizer of $S_E$, then
there is $k\in\Ndiag(E)$ such that
\[
 \sigma_{a,j}(k)=\sigma_{a,j}(h)
 \quad\text{for all }a\in Q_1,\ 1\leq j\leq\rk\rho(a),
 \qquad S_E(k)=S_E(h).
\]
Consequently, for these data,
\[
\begin{aligned}
 E\text{ is $\theta$-polystable}
 &\Longrightarrow E\text{ admits a split HKKP harmonic norm for }(\theta,f)\\
 &\Longrightarrow E\text{ is $\theta$-semistable}.
\end{aligned}
\]
Thus \hyperref[quote:hkkpquestion624]{HKKP Question~6.24} has an affirmative
answer over every complete non-Archimedean field.
\end{theorem}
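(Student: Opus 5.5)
The plan follows the outline sketched in the introduction. Starting from the global minimizer $h$, I first apply \namedref{Lemma}{lem:supportinghinges} to fix strict supporting hinges $g_a=\sum_\nu\mu_{a,\nu}(x-d_{a,\nu})_+$. Then $h$ minimizes $G_E$, and every global minimizer of $G_E$ has the arrow singular profiles of $h$ and the same value of $S_E$. It therefore suffices to produce a \emph{split} global minimizer $k$ of $G_E$.

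The next step converts $G_E$ into a determinant functional on an auxiliary radius-labelled quiver. For each arrow $a$ and each hinge $\nu$, I adjoin an auxiliary vertex carrying a copy of $E_{s(a)}$, with weight $\kappa=\mu_{a,\nu}$. It receives an identity arrow from $s(a)$ with label $b=0$ and sends an arrow $\rho(a)$ to $t(a)$ with label $b=d_{a,\nu}$. The original vertices receive weights $-\theta_i-\sum_{s(a)=i,\nu}\mu_{a,\nu}$, adjusted so that $\kappa(X)=0$; this uses $\theta(E)=0$.

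A norm on this quiver $X$ consists of a norm $k$ on $E$ together with auxiliary norms $p_{a,\nu}$. The $b$-bound says $p_{a,\nu}\geq k_{s(a)}$ and $k_{t(a)}(\rho(a)x)\leq e^{d_{a,\nu}}p_{a,\nu}(x)$, that is, $p_{a,\nu}\geq\operatorname{Env}_{\rho(a),d_{a,\nu}}(k_{s(a)},k_{t(a)})$. Determinant norms are monotone, so minimizing $J_\kappa$ over the auxiliary variables with $k$ fixed selects exactly this max envelope. \namedref{Lemma}{lem:hingedeterminant} then gives
\[
 \min_{p_{a,\nu}} J_\kappa=G_E(k)+\text{const}.
\]
This is the elimination identity. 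The envelope is split whenever $k$ is split.

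Consequently the $b$-bounded norm $P_h$ built from $h$ and its envelopes minimizes $J_\kappa$ over all $b$-bounded norms, and the goal becomes a split minimizer of $J_\kappa$. First, $X$ admits a split $b$-bounded norm: take any split $k$ and split envelopes. Second, $X$ is $\kappa$-semistable. Here I plan to argue exactly as in the converse of \namedref{Theorem}{thm:universalvariational}: given a subrepresentation $U$ with $\kappa(U)>0$, shrink the norm on $U$ by $e^{-t}$ vertexwise along adapted complements, starting from a split $b$-bounded norm. This stays $b$-bounded because the lower-left blocks vanish and the upper-right block only shrinks, while $J_\kappa$ decreases by $t\kappa(U)$. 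That contradicts the boundedness below guaranteed by the existence of the minimizer $P_h$. Given semistability, \namedref{Lemma}{lem:radiuslangton} yields a split $b$-bounded $p$ with $\gr_p^bX$ $\kappa$-semistable, and \namedref{Lemma}{lem:labelleddetminimum} shows that $p$ minimizes $J_\kappa$ among all $b$-bounded norms.

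Write $k$ for the original-vertex part of $p$; it is split. Then
\[
 G_E(k)+\text{const}\leq J_\kappa(p)\leq J_\kappa(P_h)=G_E(h)+\text{const},
\]
so $k$ globally minimizes $G_E$, and \namedref{Lemma}{lem:supportinghinges} finishes the first assertion.

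For the displayed implications: polystability gives a minimizer by \namedref{Theorem}{thm:universalvariational}. Split attainment then gives a split global minimizer, which is HKKP harmonic by \namedref{Corollary}{cor:splitcriterionallfields}(i). Part (ii) of that corollary gives semistability.

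I expect the main obstacle to be the sign and weight bookkeeping in the auxiliary quiver. The weights must make $J_\kappa$ restricted to envelopes reproduce $G_E$ exactly. The paper's convention $\kappa(U)\leq0$ must match, which is why the auxiliary weights $\mu$ enter positively and the source vertex weights compensate. The trivially valued or degenerate cases ($r_a=0$, $E=0$) are handled simply by adding no auxiliary vertices.
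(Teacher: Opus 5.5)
Your proposal is correct and is essentially the paper's proof, step for step: strict supporting hinges, the auxiliary radius-labelled quiver with $\kappa_u=\mu_{a,\nu}$ and compensating weights at the original vertices, envelope elimination via \namedref{Lemma}{lem:hingedeterminant}, $\kappa$-semistability of the auxiliary representation by the shrinking ray, and then \namedref{Lemma}{lem:radiuslangton} with \namedref{Lemma}{lem:labelleddetminimum}. One slip needs fixing: under the convention \fulleqref{eq:labelledbound}, the identity arrow must point \emph{from} the auxiliary vertex \emph{to} $s(a)$, as in the paper, so that both auxiliary arrows leave $u$; an arrow $s(a)\to u$ would impose $p_{a,\nu}\leq k_{s(a)}$ rather than the inequality $p_{a,\nu}\geq k_{s(a)}$ that your elimination step actually uses.
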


\begin{proof}
Choose strict supporting hinges as in
\namedref{Lemma}{lem:supportinghinges}, and form a new quiver $\widehat Q$.  Its
vertices are the original vertices $i\in Q_0$ together with one auxiliary
vertex $u=(a,\nu)$ for every hinge in~\fulleqref{eq:hingeexpansion}, and
\[
 \widehat E_u=E_{s(a)}.
\]
From $u=(a,\nu)$ draw precisely two arrows
\[
 \widehat E_u\xrightarrow{\ \mathrm{id}\ }E_{s(a)},
 \qquad
 \widehat E_u\xrightarrow{\ \rho(a)\ }E_{t(a)},
\]
with logarithmic radius bounds $0$ and $d_{a,\nu}$, respectively; write $b$ for this
label function on $\widehat Q_1$.  Thus a $b$-bounded norm $p$ satisfies
\[
 p_u\geq p_{s(a)},
 \qquad p_u\geq e^{-d_{a,\nu}}p_{t(a)}\circ\rho(a).
\]
Every arrow of $\widehat Q$ goes from an auxiliary vertex to an original
vertex.

Assign the following determinant weights to the vertices:
\begin{align*}
 \kappa_u&=\mu_{a,\nu},\\
 \kappa_i&=-\theta_i-
 \sum_{\substack{a\in Q_1,\ 1\leq\nu\leq m_a\\s(a)=i}}\mu_{a,\nu}.
\end{align*}
They are balanced:
\[
 \sum_{v\in\widehat Q_0}\kappa_v\dim\widehat E_v
 =-\theta(E)=0.
\]
Transport the determinant vector of $E_{s(a)}$ to each auxiliary copy and
define
\[
 J:\prod_{v\in\widehat Q_0}\Nall(\widehat E_v)\longrightarrow\R,
 \qquad
 p\longmapsto\sum_{v\in\widehat Q_0}
 \kappa_v\ell_{\widehat E_v}(p_v).
\]

For fixed $(a,\nu)$, the least feasible auxiliary norm is given by
\[
 \begin{aligned}
 q_{a,\nu}:\prod_{i\in Q_0}\Nall(E_i)&\longrightarrow
 \Nall(E_{s(a)}),\\
 p^0=(p_i)_i&\longmapsto p_{s(a)}\vee
 e^{-d_{a,\nu}}\rho(a)^*p_{t(a)}.
 \end{aligned}
\]
The determinant hinge identity gives the exact elimination formula
\begin{equation}\label{eq:auxiliaryelimination}
 J\bigl(p^0,(q_{a,\nu}(p^0))_{a,\nu}\bigr)=G_E(p^0).
\end{equation}
For a feasible norm $p$, write $p^0=(p_i)_{i\in Q_0}$ for its
original-vertex part. Every auxiliary norm $p_u$ dominates the
corresponding least one.
Pointwise domination passes to the projective tensor norm and then to its
quotient on the determinant line, so determinant norms are monotone. Since
all $\mu_{a,\nu}$ are positive, this monotonicity and
\namedref{Lemma}{lem:supportinghinges} give
\[
 J(p)\geq G_E(p^0)\geq G_E(h).
\]
Hence
\[
 \widehat h:=\bigl(h,(q_{a,\nu}(h))_{a,\nu}\bigr)
\]
is a global minimizer of $J$ on the full $b$-bounded locus.

We next prove that the fixed representation $\widehat E$ is
$\kappa$-semistable.  Let $U\subseteq\widehat E$ be a subrepresentation.
Choose bases adapted to every $U_v$, put arbitrary split coordinate norms on
the original vertices, and make each auxiliary-source norm sufficiently
large that both of its outgoing bounds hold.  This gives a split feasible
norm adapted to $U$. Relative to the decompositions
$\widehat E_v=U_v\oplus C_v$, every arrow has the block form
\[
 \rho_{\widehat E}(e)=
 \begin{pmatrix}
  a_e&\beta_e'\\
  0&d_e'
 \end{pmatrix}.
\]
Multiply the norm on every $U_v$ by $e^{-t}$ and leave the complementary
coordinate norm fixed. The diagonal block norms stay fixed and the
upper-right block norm is multiplied by $e^{-t}$. More precisely, writing
$p_{U_v}$ and $p_{C_v}$ for the initial summand norms,
\[
 \|\beta_e'\|_{p_{C_{s(e)}}\to e^{-t}p_{U_{t(e)}}}
 =e^{-t}\|\beta_e'\|_{p_{C_{s(e)}}\to p_{U_{t(e)}}}.
\]
The real factor rescales the target norm; every $K$-linear arrow map
stays fixed. Thus all bounds persist. Since the
determinant logarithm at $v$ changes by $-t\dim U_v$, along this ray
\[
 J(p_t)=J(p_0)-t\kappa(U).
\]
The lower bound for $J$ therefore forces $\kappa(U)\leq0$.

The preceding construction, applied with $U=\{0\}$, also supplies an initial
split $b$-bounded norm on $\widehat E$.
\namedref{Lemma}{lem:radiuslangton} now supplies a split $b$-bounded
norm $p$ for which $\gr_p^b\widehat E$ is $\kappa$-semistable.  By
\namedref{Lemma}{lem:labelleddetminimum}, $p$ minimizes $J$ on the full
$b$-bounded locus.  Thus $J(p)\leq J(\widehat h)$, while the preceding global lower
bound gives the reverse inequality; hence
\[
 J(p)=J(\widehat h)=G_E(h).
\]
Let $k=(p_i)_{i\in Q_0}$ be the original-vertex part of $p$; it is split
because $p$ is split. The preceding inequalities give directly
\[
 G_E(h)\leq G_E(k)\leq J(p)=G_E(h).
\]
Thus $G_E(k)=G_E(h)$, and \namedref{Lemma}{lem:supportinghinges} gives equality of all
arrow singular profiles and $S_E(k)=S_E(h)$.

If $E$ is $\theta$-polystable,
\namedref{Theorem}{thm:universalvariational} supplies a minimizer in the
completed norm space.  The
first assertion supplies a split minimizer, and
\namedref{Corollary}{cor:splitcriterionallfields} identifies it with an HKKP
harmonic norm. The second displayed implication is the final assertion of
\namedref{Corollary}{cor:splitcriterionallfields}.
\end{proof}

\section{Examples}\label{sec:examples}

Let $K=\Lambda_{\mathbb C}$ be the non-Archimedean Novikov field consisting
of sums $\sum_{\gamma\in\R}c_\gamma T^\gamma$, with
$c_\gamma\in\mathbb C$, such that only finitely many coefficients below any
given real bound are nonzero.  For a nonzero sum, set
$|\sum c_\gamma T^\gamma|=e^{-\min\{\gamma\mid c_\gamma\ne0\}}$.
The field is complete because, along any Cauchy sequence, the coefficients
below each real bound eventually stabilize.
It is not spherically complete. Indeed, put
$\gamma_n=1-\frac1{n+1}$ and $s_N=\sum_{n=1}^N T^{\gamma_n}$.
The nonempty closed balls
\[
 B_N=\{x\in K\mid |x-s_N|\leq e^{-\gamma_{N+1}}\}
 \qquad\text{for }N\geq1
\]
are nested, since
$|s_{N+1}-s_N|=e^{-\gamma_{N+1}}$ and their radii decrease.
A common point would have coefficient $1$ at every exponent
$\gamma_n<1$, contradicting the defining finiteness condition on
Novikov series. Hence $\bigcap_{N\geq1}B_N=\varnothing$.

\subsection[A two-parameter analytic family of \texorpdfstring{$D_4$}{D4} representations]
{A two-parameter analytic family 
of \texorpdfstring{$D_4$}{D4} representations}
\label{subsec:d4analyticfamily}

Here and below, $(-)^{\mathrm{an}}$ denotes Berkovich analytification.
Put $X=(\mathbf P^1_K\times\mathbf G_{m,K})^{\mathrm{an}}$, let
$p:X\to\mathbf P_K^{1,\mathrm{an}}$ be the first projection, and let
$\lambda$ be the coordinate on the second factor. Write $z$ for the affine
coordinate on $\mathbf A_K^{1,\mathrm{an}}
=\mathbf P_K^{1,\mathrm{an}}\setminus\{\infty\}$.
Let $0$ denote the $K$-rational origin, namely the point with affine
coordinate $z=0$. The space $X$ is a pure
two-dimensional analytic space. Index the vertices by $Q_0=\{0,1,2,3\}$,
with center $0$ and arrows $i\to0$ for $i=1,2,3$.
We index each arrow $i\to0$ by its source $i$.
At $x\in X$, let $K_x=\mathcal H(x)$ be the completed residue field,
with absolute value $|\cdot|_x$. Write $e_1,e_2$ for the standard basis
of $K_x^2$. Take the representation $E_x$ with central space
$E_{0,x}=K_x^2$ and leaf spaces $E_{i,x}=L_{i,x}$, where
\[
 L_{1,x}=K_xe_1,\qquad L_{2,x}=K_xe_2,\qquad
 L_{3,x}\subseteq K_x^2.
\]
Here $L_3=p^*(\mathcal O_{\mathbf P^1_K}(-1)^{\mathrm{an}})$ is the
pullback of the tautological line bundle; its fiber on the affine chart is
$K_x(ze_1+e_2)$, and at infinity it is $K_xe_1$. The arrows are the
inclusions. Fix
\begin{align*}
 &\theta_0=1,\qquad \theta_i=-a_i\quad\text{for }i=1,2,3,\qquad
 (a_1,a_2,a_3)=(1,\tfrac12,\tfrac12),\\
 &f_i(\sigma)=e^\sigma\quad\text{for }i=1,2,3,\\
 &h_{0,x}(ae_1+be_2)=\max\{|a|_x,|\lambda(x)|_x^{-1}|b|_x\},\qquad
 h_{i,x}=a_i^{-1}h_{0,x}|_{L_{i,x}}.
\end{align*}
The vector bundles and inclusion maps form an analytic family. The metrics
are continuous in the following concrete sense: the norm of every local
analytic section is a continuous real-valued function of $x$. For the
central bundle this follows from the displayed maximum of absolute values
of analytic functions; for the leaves it follows by restriction and the
constant factors $a_i^{-1}$. Every fiber metric is split.
At a $K$-rational point the fiber is defined over $K$; at a general point
$x$, it is defined over $K_x$.  Accordingly, all assertions are fiberwise;
the norms do not live on a single fixed $K$-vector space.
This construction places the weighted $D_4$ example of
\cite[Example~6.22]{HKKP} in a two-parameter analytic family.
Let $\mathcal P,\mathcal H,\mathcal S$ denote, respectively, the loci where
$E_x$ is $\theta$-polystable, the displayed norm $h_x$ is HKKP harmonic for
$(\theta,f)$, and $E_x$ is $\theta$-semistable. In particular,
$\mathcal H$ concerns this chosen metric, not the existence of a harmonic metric.

\begin{proposition}\label{prop:d4analyticfamily}
These loci are
\begin{align*}
 \mathcal P&=p^{-1}(\{0\})\cong\mathbf G_{m,K}^{\mathrm{an}},\\
 \mathcal H&=\{x\in p^{-1}(\mathbf A_K^{1,\mathrm{an}})
                  \mid |\lambda z|(x)\leq1\},\\
 \mathcal S&=p^{-1}(\mathbf A_K^{1,\mathrm{an}}).
\end{align*}
The locus $\mathcal P$ is the full fiber over $z=0$, with the second
parameter $\lambda$ unrestricted.  Consequently,
\[
 \mathcal P\subsetneq\mathcal H\subsetneq\mathcal S\subsetneq X.
\]
Here $\mathcal S$ is open in $X$, whereas $\mathcal H$ is a closed analytic
domain in $\mathcal S$.
\end{proposition}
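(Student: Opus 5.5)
The plan is to work fiber by fiber. At each $x\in X$ I first decide $\theta$-(poly/semi)stability of $E_x$ by listing its subrepresentations. I then decide HKKP harmonicity of $h_x$ by computing $\GF_{Q,K_x}(E_x,\rho,h_x)$ explicitly and testing $\theta^\star$-semistability directly from \fulleqref{eq:coredegree}. The global assertions about $\mathcal P,\mathcal H,\mathcal S$ then follow, because $z(x)=0$ in $K_x$ exactly when $p(x)=0$, and $|\lambda z|(x)$ is a continuous function on $p^{-1}(\mathbf A^{1,\mathrm{an}}_K)$. First, $\theta(E_x)=2-1-\tfrac12-\tfrac12=0$. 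A subrepresentation $U$ is a subspace $U_0\subseteq K_x^2$ together with leaves $U_i\in\{0,L_{i,x}\}$ satisfying $U_i\subseteq U_0$.

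For the stability part, consider first $U_0$ a line $\ell$. Then $\theta(U)=1-\sum_{i:\,U_i=L_{i,x}=\ell}a_i$. If $U_0=K_x^2$, then $\theta(U)=2-\sum_{U_i\neq0}a_i\geq0$, with equality only for $U=E_x$.
\begin{itemize}
\item At infinity, $L_3=L_1$, and the line $\ell=K_xe_1$ gives $1-1-\tfrac12<0$, so the fiber is unstable.
\item For finite $z$ with $z\neq0$, the three lines are distinct. Each line gives $1-a_i\geq0$, so $E_x$ is semistable. The only proper nonzero degree-zero subobject is $(K_xe_1,L_1,0,0)$, and no degree-zero complement exists. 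Since a polystable object with a degree-zero sub must split it off, $E_x$ is not polystable.
\item For $z=0$, $L_3=L_2$, and $(K_xe_2,0,L_2,L_3)$ is a degree-zero complement of $(K_xe_1,L_1,0,0)$. Each summand is stable, so $E_x$ is polystable.
\end{itemize}
This gives $\mathcal P$ and $\mathcal S$.

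For $\mathcal H$, each arrow is a line inclusion with $h_{i}=a_i^{-1}h_0|_{L_i}$. Hence its single logarithmic singular value is $\sigma_i=\ln a_i$ for every $x$, and $f_i'(\sigma_i)=a_i$. In \fulleqref{eq:coredegree} the source contribution $+a_i\dim$ of a leaf cancels its vertex weight $\theta_i=-a_i$. Therefore the $\theta^\star$-degree of a strict subobject $V\subseteq\GF$ equals $\dim V_0$ minus $\sum_i a_i$ over those $i$ for which the target step $\mathcal F^{i+}_{\geq\ln a_i}$, namely the initial line $\operatorname{in}(L_i)\subseteq\gr_{h_0}K_x^2$, lies in $V_0$. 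A leaf may be included without its image only at the cost of a further positive term. Thus semistability of the core reduces to one condition: every graded line of $\gr_{h_0}K_x^2$ satisfies $1-\sum_{\operatorname{in}(L_i)=\ell}a_i\geq0$. Equivalently, $\operatorname{in}(L_3)\neq\operatorname{in}(L_1)$.

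Next I compute the initial forms. With $h_0(e_1)=1$ and $h_0(e_2)=|\lambda|^{-1}$, the vector $ze_1+e_2$ has initial form
\begin{itemize}
\item in the $\bar e_2$ direction if $|\lambda z|<1$;
\item in a mixed homogeneous direction distinct from $\bar e_1,\bar e_2$ if $|\lambda z|=1$, when both terms lie in one graded piece;
\item equal to $\bar e_1=\operatorname{in}(L_1)$ if $|\lambda z|>1$.
\end{itemize}
At infinity $L_3=L_1$. This yields exactly $\mathcal H=\{|\lambda z|\leq1\}\cap p^{-1}(\mathbf A^1)$. One can cross-check the answer against \namedref{Corollary}{cor:splitcriterionallfields}: on $\mathcal H$ the norm minimizes, which is consistent with $E_x$ being semistable there.

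The strict inclusions follow from explicit rigid points, for instance $z=T^{1}$ with $\lambda=1$ (in $\mathcal H\setminus\mathcal P$), $z=T^{-2}$ with $\lambda=1$ (in $\mathcal S\setminus\mathcal H$), and the point at infinity (in $X\setminus\mathcal S$). $\mathcal S$ is open as the preimage of the open chart. $\mathcal H$ is a closed analytic domain in $\mathcal S$, cut out by the single inequality $|\lambda z|\leq1$. The step I expect to be the main obstacle is the core computation: one must check carefully that strict subobjects in $\mathcal C_\star(Q)$, with intersected filtrations, give exactly the line condition above. In particular, leaves included at the source but not mapping into $V_0$ must contribute nonnegatively, and the mixed case $|\lambda z|=1$ must produce a homogeneous line distinct from $\bar e_1$ even when $-\ln|\lambda|\notin\Gamma_{K_x}$. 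In that case the two basis weights lie in different classes modulo $\Gamma_{K_x}$, so the equality case $|\lambda z|=1$ cannot occur and there is nothing further to check.
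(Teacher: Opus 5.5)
Your proposal is correct and follows essentially the same route as the paper. Both arguments enumerate subrepresentations fiber by fiber to determine $\mathcal P$ and $\mathcal S$. Both then use the core-degree formula $\dim V_0-\sum_i a_i\dim(V_0\cap\ell_i)$, with the leaf terms cancelling, and split into the trichotomy for the initial line of $ze_1+e_2$ according to whether $|\lambda z|$ is less than, equal to, or greater than $1$.

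Two slips are harmless. First, including a leaf in a core subobject contributes exactly zero, not a further positive term, as your own formula shows. Second, the worry about $-\ln|\lambda|\notin\Gamma_{K_x}$ is vacuous, since $\lambda(x)\in K_x^\times$ always.
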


\begin{proof}
For a central line $M\subseteq K_x^2$, the least degree of a
subrepresentation with center $M$ is $1-\sum_{L_{i,x}=M}a_i$.
Full-center proper subrepresentations have positive degree.  For finite
$z\ne0$ the three lines are distinct, hence the fiber is semistable; it is
not polystable because its degree-zero $L_1$-subrepresentation has no
complement containing both $L_2$ and $L_3$.  At $z=0$ it is the direct sum
of the stable degree-zero pieces on $K_xe_1$ and $K_xe_2$.  At infinity,
$L_1=L_3$ has total weight $3/2>1$ and destabilizes.  This proves the
descriptions of $\mathcal P$ and $\mathcal S$.

The three singular values are $\ln a_i$, so their ramp derivatives are
$a_i$.  In~\fulleqref{eq:coredegree} the leaf terms cancel the arrow-source
terms.  Thus a core subobject with graded central part $W$ has degree
$\dim W-\sum_i a_i\dim(W\cap\ell_i)$, where
$\ell_i\subseteq\gr_{h_{0,x}}(K_x^2)$ is the associated-graded image of
$L_{i,x}$ with its restricted central norm, called its \emph{initial line}.
In the orthonormal basis $e_1,\lambda e_2$, the third line is
generated by $\lambda ze_1+\lambda e_2$; its initial line is $\ell_2$, a
line distinct from $\ell_1$ and $\ell_2$, or $\ell_1$, according as
$|\lambda z|$ is less than, equal to, or greater than $1$.
Only the last case is unstable, since then $\ell_1$ has weight $3/2$.
At infinity $\ell_3=\ell_1$, so the same weight also rules out harmonicity.
Thus $h_x$ is harmonic exactly when $|\lambda z|(x)\leq1$.
The analytic change of coordinates $(z,\lambda)\mapsto(\lambda z,\lambda)$
on $\mathcal S$ identifies $\mathcal H$ with the closed unit disc times
$\mathbf G_{m,K}^{\mathrm{an}}$, proving that it is an analytic domain
closed in $\mathcal S$. Fix a finite $K$-rational first parameter $z$. As the second
parameter varies, the restrictions of the central norms to $K^2$ depend
only on the real coordinate
$q=-\ln|\lambda|$ and trace an apartment line in $\Ndiag(K^2)$.
For $z\ne0$, harmonicity is exactly $q\geq\ln|z|$; for $z=0$, every
point of this line is harmonic.
\end{proof}

\begin{corollary}\label{cor:d4harmonicexistence}
In this family, the locus where a split HKKP harmonic norm exists is
\[
 \mathcal H_{\exists}:=
 \{x\in X\mid E_x\text{ admits a split HKKP harmonic norm for }(\theta,f)\}
 =\mathcal S.
\]
At the $K$-rational point $(z,\lambda)=(1,1)$, the representation is
indecomposable and $\theta$-semistable but not $\theta$-polystable, while
the displayed split norm is harmonic and globally minimizes the extended
potential. Thus existence of a global minimizer does not imply
$\theta$-polystability, as in \cite[Example~6.22]{HKKP}.
\end{corollary}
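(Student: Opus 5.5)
The equality $\mathcal H_\exists=\mathcal S$ will follow from two inclusions, each proved fiberwise over the complete field $K_x=\mathcal H(x)$, followed by a direct check at the point $(1,1)$.

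For $\mathcal H_\exists\subseteq\mathcal S$, apply part~(ii) of \namedref{Corollary}{cor:splitcriterionallfields} over $K_x$: if $E_x$ carries a split harmonic norm, then $E_x$ is $\theta$-semistable. For $\mathcal S\subseteq\mathcal H_\exists$ there are two cases. If $|\lambda z|(x)\leq1$, then $x\in\mathcal H$ by \namedref{Proposition}{prop:d4analyticfamily}, so the displayed norm $h_x$ is already a split harmonic norm. If $|\lambda z|(x)>1$, note that $z\ne0$ and $E_x$ depends only on $z(x)$, not on $\lambda$. The norms $h_x$ are defined fiberwise for any value of $\lambda$. Choose a scalar $c\in K_x^\times$, or even just a real radius, such that replacing $|\lambda(x)|$ by $e^{-q}$ with $q\geq\ln|z|_x$ makes the criterion of the proposition hold. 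Equivalently, use the central norm $\max\{|a|,e^{q}|b|\}$ with $e^{-q}|z|\leq1$, and restrict it to the leaves with the factors $a_i^{-1}$. The initial-line analysis in the proof of \namedref{Proposition}{prop:d4analyticfamily} only used the real number $|\lambda z|$, so it applies verbatim and shows this split norm is harmonic. This real-parameter reuse of the earlier computation is the main point to verify; I expect it to be routine, since the proof there is phrased in terms of the apartment line parametrized by $q$. Alternatively, one can invoke \namedref{Theorem}{thm:splitattainment}, but that requires a minimizer, and semistable non-polystable fibers are not covered by \namedref{Theorem}{thm:universalvariational}. The explicit norm avoids this issue.

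At $(z,\lambda)=(1,1)$ we have $|\lambda z|=1$, so $x\in\mathcal H$. The proposition then shows that $E_x$ is $\theta$-semistable but not polystable, and that $h_x$ is harmonic. Part~(i) of \namedref{Corollary}{cor:splitcriterionallfields} makes $h_x$ a global minimizer of the extended potential on $\Nall(E_x)$.

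For indecomposability, the three leaf lines $Ke_1$, $Ke_2$, $K(e_1+e_2)$ are distinct in $K^2$. A direct sum decomposition would split the central space $K^2$ into two lines or into $K^2\oplus0$. In the first case, each leaf, being mapped injectively, must go entirely to one summand, which forces each leaf line to equal one of the two central lines. This is impossible for three distinct lines. In the second case, the other summand consists of leaves with zero central part, so those leaves would have to be zero, leaving a trivial complement. Hence $E_x$ is indecomposable. Since polystability fails while a global minimizer exists, existence of a global minimizer does not imply polystability.
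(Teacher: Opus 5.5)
Your proof is correct and follows essentially the paper's argument. You use Corollary~\ref{cor:splitcriterionallfields}(ii) for $\mathcal H_\exists\subseteq\mathcal S$, and for the reverse inclusion you use the same real-radius central norm $\max\{|a|_x,e^q|b|_x\}$ with $q\geq\ln|z(x)|_x$, fed back into the core calculation of Proposition~\ref{prop:d4analyticfamily}; at $(1,1)$ you combine that proposition, part~(i), and the three-distinct-lines argument. The paper handles all $x\in\mathcal S$ uniformly rather than splitting off the case $x\in\mathcal H$, and your indecomposability argument is slightly more complete, since it also rules out a decomposition in which one summand carries the whole center.
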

\begin{proof}
Fix $x\in\mathcal S$. On the fiber over $K_x$, choose a real $q$
with $q\geq\ln|z(x)|_x$ if $z(x)\ne0$, and any $q$ if $z(x)=0$.
Replace the central norm by
\[
 g_0(ae_1+be_2)=\max\{|a|_x,e^q|b|_x\},
 \qquad g_i=a_i^{-1}g_0|_{L_{i,x}}.
\]
The core calculation in \namedref{Proposition}{prop:d4analyticfamily}
shows that this split norm is harmonic. Conversely, existence of a split
harmonic norm implies semistability by
\namedref{Corollary}{cor:splitcriterionallfields}(ii).
At $(1,1)$ the three leaf lines are distinct. A nontrivial direct-sum
decomposition would split the center into two lines and place each leaf in
one of them, which is impossible for three distinct lines. The other
assertions at this point follow from
\namedref{Proposition}{prop:d4analyticfamily} and the global-minimizer
criterion \namedref{Corollary}{cor:splitcriterionallfields}(i).
\end{proof}

The potential makes the geometry of these minima explicit. Fix
$x\in\mathcal S$, abbreviate $z=z(x)$, and use determinant vectors
\[
 \omega_0=e_1\wedge e_2,\qquad
 \omega_1=v_1=e_1,\qquad \omega_2=v_2=e_2,\qquad
 \omega_3=v_3=ze_1+e_2.
\]
For an arbitrary norm $g=(g_i)\in\Nall(E_x)$, including a nonsplit
one, put
\[
 u_i=\ln g_0(v_i),\qquad d=\ln(g_0)_{\det}(\omega_0),\qquad
 \sigma_i=u_i-\ln g_i(v_i)\qquad\text{for } i=1,2,3.
\]
The potential is exactly
\begin{equation}\label{eq:d4potential}
 S_{E_x}(g)=\sum_{i=1}^3(e^{\sigma_i}-a_i\sigma_i)+R(g_0),
 \qquad R(g_0)=u_1+\tfrac12u_2+\tfrac12u_3-d.
\end{equation}
Since $v_1\wedge v_2=v_1\wedge v_3=\omega_0$, the defining inequality
for the exterior norm gives $d\leq u_1+u_2$ and $d\leq u_1+u_3$.
Averaging these two inequalities proves $R(g_0)\geq0$ for every central
norm. Each scalar term $e^{\sigma_i}-a_i\sigma_i$ has its unique minimum
at $\sigma_i=\ln a_i$, so
\[
 S_{E_x}(g)\geq\sum_{i=1}^3a_i(1-\ln a_i)=2+\ln2.
\]
This is a lower bound on the space of all norms, not merely on an apartment.
Moreover, varying the norm on a leaf varies only its $\sigma_i$ in
\fulleqref{eq:d4potential}; minimization in the leaf variables therefore forces
$e^{\sigma_i}=a_i=-\theta_i$. This explains the chosen leaf factors.

In the standard central apartment, write
\[
 g_0(ae_1+be_2)=\max\{e^{w_1}|a|_x,e^{w_2}|b|_x\},
 \qquad (w_1,w_2)\in\R^2.
\]
Then $d=w_1+w_2$ and
\[
 R(g_0)=\tfrac12\ln\max\{1,|z|_x e^{w_1-w_2}\}.
\]
For $z\ne0$ this is $\tfrac12(w_1+\ln|z|_x-w_2)_+$; for $z=0$ it is
zero. Taking the minimizing leaf norms $g_i=a_i^{-1}g_0|_{L_{i,x}}$
therefore attains the global minimum precisely when
$w_2-w_1\geq\ln|z|_x$ if $z\ne0$, and for every $(w_1,w_2)$ if $z=0$.
For the displayed family, $w_1=0$ and $w_2=-\ln|\lambda(x)|_x$, giving
again the condition $|\lambda z|(x)\leq1$. In particular, for fixed
$x\in\mathcal S$, the minimizing central norms contain an unbounded half-line modulo
simultaneous homothety, not a single point.

Unlike in \namedref{Subsection}{subsec:nonsplitattainment}, all norms exhibited in this
family are split. The family illustrates the distinction between the
harmonicity of a specified metric and the existence of a harmonic metric,
as well as the geometry of the set of minimizers. It does not exhibit a
nonsplit minimizer or require the
split-attainment construction of \namedref{Section}{sec:hinges}. For this, we have the next example.

\subsection{A nonsplit minimizer and one-step split attainment}
\label{subsec:nonsplitattainment}

Put $V=K^2$, with basis $e_1,e_2$, and set
\[
 \gamma_n=1-\frac1{n+1},\qquad s_N=\sum_{n=1}^N T^{\gamma_n},\qquad
 p(ae_1+be_2)=\lim_{N\to\infty}|a+bs_N|.
\]
For $b\ne0$, the absolute value inside the limit is eventually constant:
the Novikov series $-a/b$ cannot contain all the terms $T^{\gamma_n}$
below exponent $1$, so the first noncancelling term has exponent strictly
less than $1$. Thus $p(ae_1+be_2)>e^{-1}|b|$. For $b=0$ the value is $|a|$.
Homogeneity and the ultrametric inequality pass to the limit, so $p$ is a
norm. Moreover,
\[
 p(e_2-s_Ne_1)=e^{-\gamma_{N+1}}\longrightarrow e^{-1},
 \qquad \inf_{a\in K}p(ae_1+e_2)=e^{-1},
\]
and the infimum is not attained. It is the value of the quotient norm
$\bar p:=p/(Ke_1)\in\Nall(V/Ke_1)$ at the coset $e_2+Ke_1$.
If $p$ were split, an orthogonal basis
adapted to $Ke_1$ would supply a minimum-norm representative of every
class in $V/Ke_1$, by \namedref{Lemma}{lem:adapted}. Hence $p$ is nonsplit.
This also shows that $K$ is not spherically complete.

Take the $A_2$ representation $E_s=E_t=V$ with its single arrow
$\mathrm{id}:E_s\to E_t$, weights $(\theta_s,\theta_t)=(-1,1)$, and
ramp $f(\sigma)=e^\sigma$. Use $e_1\wedge e_2$ as determinant vector at
both vertices, and write $\ell(q)=\ln q_{\det}(e_1\wedge e_2)$ for a norm
$q$ on $V$. For arbitrary source and target norms $h_s,h_t$, the two
singular values satisfy $\sigma_1+\sigma_2=\ell(h_t)-\ell(h_s)$, so
\[
 S_E(h_s,h_t)=\sum_{j=1}^2(e^{\sigma_j}-\sigma_j)\geq2.
\]
For $(h_s,h_t)=(p,p)$ both singular values are zero. Thus $(p,p)$ is a
nonsplit global minimizer; this conclusion follows directly, without invoking
\namedref{Theorem}{thm:splitattainment}.

We now carry out that theorem's construction. At the singular value zero,
choose the strict supporting hinge $g(x)=2x_+$, since
$f'(0)=1\in(0,2)$. Its potential is
\[
 G_E(h_s,h_t)=2\sum_{j=1}^2(\sigma_j)_+
              -\ell(h_t)+\ell(h_s)=|\sigma_1|+|\sigma_2|.
\]
The auxiliary quiver has a new source $u$, spaces $V_u=V_s=V_t=V$, and
identity arrows $V_u\to V_s$ and $V_u\to V_t$, both with logarithmic
radius bound zero.
Its weights and determinant functional are
\[
 (\kappa_u,\kappa_s,\kappa_t)=(2,-1,-1),\qquad
 J(q_u,q_s,q_t)=2\ell(q_u)-\ell(q_s)-\ell(q_t).
\]
Feasibility means $q_u\geq q_s,q_t$. Determinant monotonicity gives
$J\geq0$ on this full feasible locus, and $(p,p,p)$ attains zero.
As in the proof of \namedref{Theorem}{thm:splitattainment}, the reduction
starts instead from a feasible \emph{split} auxiliary norm. Choose
$q(ae_1+be_2)=\max\{|a|,|b|\}$ and $\delta>0$, and start with
\[
 (q_u,q_s,q_t)=(q,e^{-\delta}q,q).
\]
The corresponding original-vertex pair has singular values $(\delta,\delta)$ and is not
harmonic. In the auxiliary reduction the arrow $u\to s$ vanishes, whereas
$u\to t$ is the identity. A nonzero graded subrepresentation with vertex
ranks $d_u,d_s,d_t$ has $d_t\geq d_u$ and therefore
\[
 \frac{2d_u-d_s-d_t}{d_u+d_s+d_t}\leq\frac12,
\]
with equality precisely when $d_s=0$ and $d_t=d_u$. Hence the
maximal-slope subobject $A_0$ equals the full spaces at vertices $u,t$ and
is zero at $s$, with slope $1/2$; its quotient $B_0$ is supported only at $s$
and has slope $-1$.

In the notation of \namedref{Lemma}{lem:radiuslangton}, the lifted pieces
$A_v,B_v$ have disjoint vertex supports, so the vertexwise Hom space
$C^0$ is zero and no shear correction is needed. The residual block is
the identity arrow $u\to s$, of norm $r=e^{-\delta}$. The prescribed
radius modification multiplies the norms on $B$ by $r^{-1}=e^\delta$,
giving $(q,q,q)$. Both reduced arrows are now identities. Every graded
subrepresentation $U$ therefore has
\[
 \rk_{\mathcal R} U_s,\ \rk_{\mathcal R} U_t\geq\rk_{\mathcal R} U_u,
 \qquad \kappa(U)=2\rk_{\mathcal R} U_u-\rk_{\mathcal R} U_s-\rk_{\mathcal R} U_t\leq0.
\]
Thus the reduction is semistable after this single radius modification, and
$J$ has decreased from $2\delta$ to zero. Removing the auxiliary vertex
leaves the split harmonic norm $(q,q)$, by
\namedref{Corollary}{cor:splitcriterionallfields}(i), with the same singular profile
$(0,0)$ and potential value $2$ as the nonsplit minimizer $(p,p)$.
Here the radius modification is nontrivial; only the shear-correction part
of the general construction is vacuous.

\subsection[Nonsplit minima and two-step split attainment for D4]
{Nonsplit minima and two-step split attainment for \texorpdfstring{$D_4$}{D4}}
\label{subsec:d4nonsplit}

The next example couples three arrows through a common central space.
It describes all minimizers, and its split-attainment calculation requires
two radius modifications with different effects. Keep the field $K$ from
above, put $W=K^2$, and consider the $D_4$ representation
\[
 E_0=W\oplus W,\qquad E_1=E_2=E_3=W
\]
with arrows $i\to0$ given by
\begin{equation}\label{eq:d4multiplicityarrows}
 \rho_1(v)=(v,0),\qquad
 \rho_2(v)=(0,v),\qquad
 \rho_3(v)=(v,v).
\end{equation}
Set
\[
 \theta_0=\tfrac32,\qquad \theta_1=\theta_2=\theta_3=-1,
 \qquad f_i(\sigma)=e^\sigma.
\]
Choose a basis $w_1,w_2$ of $W$, put $\omega=w_1\wedge w_2$, and use
$\omega$ at every leaf and
\[
 \Omega=(w_1,0)\wedge(w_2,0)\wedge(0,w_1)\wedge(0,w_2)
\]
at the center. The vector $\Omega$ fixes the central determinant
normalization: for every norm $h_0$ on $E_0$,
\[
 \ell_{E_0}(h_0)=\ln(h_0)_{\det}(\Omega),
\]
and its contribution to $S_E$ is $-\tfrac32\ell_{E_0}(h_0)$.
Together with the leaf determinant vectors $\omega$, this fixes the
additive normalization of the potential used below.
Any two image planes in \fulleqref{eq:d4multiplicityarrows}
are complementary, and
\[
 (\bigwedge^2\rho_i)(\omega)\wedge
 (\bigwedge^2\rho_k)(\omega)=\pm\Omega
 \qquad(1\leq i<k\leq3).
\]
Thus all three decompositions of $E_0$ use the same central determinant
vector, with no change in its absolute size.

The representation is $\theta$-polystable: a basis of $W$ identifies it
with two copies of the $D_4$ representation consisting of three distinct
lines in a two-dimensional center. Each copy is stable for these weights.
Indeed, a central line contains at most one leaf image, giving degree at
least $3/2-1>0$, while a proper full-center subrepresentation omits at
least one leaf and has degree at least $1$.

\begin{proposition}\label{prop:d4minimumlocus}
The global minimum of $S_E$ on $\Nall(E)$ is $6$, and its entire minimum
locus is
\begin{equation}\label{eq:d4minimumlocus}
 H(\nu)=(\nu\oplus\nu,\nu,\nu,\nu),
 \qquad \nu\in\Nall(W),
\end{equation}
where $(\nu\oplus\nu)(v,w)=\max\{\nu(v),\nu(w)\}$.
All six arrow singular values at $H(\nu)$ are zero.
The norm $H(\nu)$ is split exactly when $\nu$ is split.
\end{proposition}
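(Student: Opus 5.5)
The plan is to copy the calculation \fulleqref{eq:d4potential} from the first $D_4$ family: rewrite $S_E$ on all of $\Nall(E)$ as a sum of scalar terms plus a determinant remainder, then bound each piece below. Fix $g=(g_0,g_1,g_2,g_3)\in\Nall(E)$. For each leaf $i$ the arrow $\rho_i:(W,g_i)\to(E_0,g_0)$ is injective of rank $2$. Since $\bigwedge^2W=K\omega$, we have $s_2(\rho_i;g)=u_i-\ell(g_i)$, where
\[
 u_i:=\ln(g_0)_{\wedge2}\bigl((\textstyle\bigwedge^2\rho_i)\omega\bigr),
 \qquad \ell(g_i)=\ln(g_i)_{\det}(\omega).
\]
Because $\theta_i=-1$ at the leaves, the leaf determinant terms cancel against $\sum_j\sigma_{i,j}=s_2(\rho_i;g)$. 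This gives the exact identity
\[
 S_E(g)=\sum_{i=1}^3\sum_{j=1}^2\bigl(e^{\sigma_{i,j}}-\sigma_{i,j}\bigr)+R(g_0),
 \qquad R(g_0)=u_1+u_2+u_3-\tfrac32\ell_{E_0}(g_0).
\]
Concatenating decompositions in the infimum \fulleqref{eq:exteriornorm} shows $(g_0)_{\det}(\xi\wedge\eta)\leq(g_0)_{\wedge2}(\xi)(g_0)_{\wedge2}(\eta)$. The displayed wedge identity $(\bigwedge^2\rho_i)\omega\wedge(\bigwedge^2\rho_k)\omega=\pm\Omega$ then gives $\ell_{E_0}(g_0)\leq u_i+u_k$ for each of the three pairs. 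Summing the three inequalities yields $R\geq0$, and with $e^\sigma-\sigma\geq1$ this gives $S_E\geq6$ on all norms.

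Next we check attainment at $H(\nu)$. Each $\rho_i$ is an isometric embedding $(W,\nu)\to(W\oplus W,\nu\oplus\nu)$. For $\rho_3$ this holds because $\max\{\nu(v),\nu(v)\}=\nu(v)$. Moreover, any two of the three image planes give an orthogonal decomposition of $\nu\oplus\nu$; for instance $(v,w)=(v,v)+(0,w-v)$, and the ultrametric inequality gives $\max\{\nu(v),\nu(w)\}=\max\{\nu(v),\nu(w-v)\}$. First take $\nu$ split. In adapted orthogonal bases all singular values are $0$, and multiplicativity of determinants in orthogonal sums gives $\ell_{E_0}=u_i+u_k=2\ell(\nu)$, so $R=0$ and $S_E=6$. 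For general $\nu$, approximate by split norms and pass to the limit using continuity (\namedref{Lemma}{lem:allnormprofile} together with continuity of determinant norms).

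For the converse, suppose $S_E(g)=6$. Then all $\sigma_{i,j}=0$ and $\ell_{E_0}=u_i+u_k$ for every pair. The key tool will be this rigidity statement: if $p\leq q$ are norms with equal determinants, then $p=q$. For split norms this is immediate in a common apartment, since the weights satisfy termwise inequalities with equal sums. This step is the main obstacle, because $g$ may be nonsplit. I would handle it exactly as in \namedref{Lemma}{lem:labelleddetminimum}: pass to a maximally complete immediate extension $L$, where every norm is split. Scalar extension is an isometric embedding, commutes with determinants \cite[Corollary~2.10]{BE}, preserves inequalities of norms (the infimum formula is monotone), and preserves the vanishing of all $s_j$. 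It therefore suffices to argue over $L$ and then restrict back to $K$.

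Over $L$ the argument runs in three steps.
\begin{enumerate}[label=\textup{(\arabic*)},leftmargin=*,nosep]
\item Since $s_1=0$, each arrow is non-expansive, so $\nu_i:=g_0\circ\rho_i\leq g_i$. Since $s_2=0$, these two norms have equal determinants, and rigidity gives $g_i=\nu_i$.
\item For each pair $i<k$, the ultrametric inequality gives $g_0\leq\nu_i\oplus\nu_k$ in the decomposition $E_0=\rho_iW\oplus\rho_kW$. The equality $\ell_{E_0}=u_i+u_k$ makes the determinants equal, so rigidity gives $g_0=\nu_i\oplus\nu_k$.
\item Evaluating on $(v,v)$, $(0,v)=-(v,0)+(v,v)$, and $(v,0)=(v,v)-(0,v)$ in the three decompositions gives $\nu_3=\max\{\nu_1,\nu_2\}$, $\nu_2=\max\{\nu_1,\nu_3\}$, and $\nu_1=\max\{\nu_2,\nu_3\}$. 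Hence $\nu_1=\nu_2=\nu_3=:\nu$ and $g=H(\nu)$.
\end{enumerate}

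Finally, if $\nu$ is split then $\nu\oplus\nu$ is split, so $H(\nu)$ is split. Conversely, if $H(\nu)$ is split, then $\nu=(\nu\oplus\nu)|_{W\oplus0}$ is split by \namedref{Lemma}{lem:adapted}.
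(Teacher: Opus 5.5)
Your proof is correct, but it takes a different route from the paper at two places.

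\textbf{The lower bound.} You define $u_i$ directly as the exterior norm of $g_0$ at $(\bigwedge^2\rho_i)\omega$. Because $\bigwedge^2W$ is a line, $s_2(\rho_i;g)=u_i-\ell(g_i)$ is then just the definition, and $\ell_{E_0}(g_0)\le u_i+u_k$ follows by concatenating decompositions in the exterior-norm infimum. The paper instead works with the pulled-back norms $p_i=h_0\circ\rho_i$. It obtains $\sigma_{i,1}+\sigma_{i,2}=d_i-\ell_i$ by split density, and it bounds $D\le d_i+d_k$ using $h_0\le p_i\oplus p_k$, determinant monotonicity, and the direct-sum determinant identity. Your version is a little more elementary here.

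\textbf{The equality case.} This is the main divergence. You prove rigidity only for split norms and then pass to a maximally complete immediate extension $L$. The paper proves rigidity for arbitrary norms directly over $K$: if $r\le s$ and $r(v)<s(v)$, the restriction--quotient identity gives $\det r=r(v)\det(r/Kv)\le r(v)\det(s/Kv)<\det s$. That route avoids all scalar-extension bookkeeping.

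Yours needs the relevant data to survive passage to $L$. Monotonicity and the bound $(g_0)_L\circ\rho_i\le(g_i)_L$ are immediate from the infimum formula. However, your claim that scalar extension ``preserves the vanishing of all $s_j$'' is asserted without proof. It is true, and one line suffices. First, $u_i^L\le u_i$, because $K$-decompositions are admissible over $L$. Second, using the equality case and determinant compatibility, for every pair
$u_i+u_k=\ell_{E_0}(g_0)=\ell_{E_0}((g_0)_L)\le u_i^L+u_k^L$.
Together these force $u_i^L=u_i$. Over $L$, the identification $u_i^L=\ell((g_0)_L\circ\rho_i)$ that you use tacitly in step (1) then holds via an orthogonal basis adapted to $\rho_iW$.

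\textbf{Identifying the three pullbacks.} Your step (3) uses all three pairwise splittings $g_0=\nu_i\oplus\nu_k$ and pure evaluation to get $\nu_1=\nu_2=\nu_3$. The paper uses one splitting to obtain $p_3=\max\{p_1,p_2\}$ and then applies rigidity a second time. Both work.
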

\begin{proof}
For an arbitrary norm $h=(h_0,h_1,h_2,h_3)$, define norms on $W$ and
determinant logarithms by
\[
 \begin{aligned}
 p_i(v)&=h_0(\rho_i(v)),&
 d_i&=\ln(p_i)_{\det}(\omega),\\
 \ell_i&=\ln(h_i)_{\det}(\omega),&
 D&=\ln(h_0)_{\det}(\Omega).
 \end{aligned}
\]
For each pair $i<k$, the corresponding direct-sum norm dominates $h_0$
under the isomorphism $W\oplus W\to E_0$ induced by $\rho_i,\rho_k$.
Determinant monotonicity and the direct-sum determinant identity, using
the preceding wedge identity, therefore give
\[
 D\leq d_i+d_k\qquad(1\leq i<k\leq3).
\]
The determinant identities used here hold for arbitrary non-Archimedean
norms \cite[Corollary~2.10]{BE}. Moreover,
$\sigma_{i,1}+\sigma_{i,2}=d_i-\ell_i$.
This follows in adapted orthogonal coordinates for split norms, and
extends to all norms by density and continuity, including continuity
of restriction. Hence
\begin{equation}\label{eq:d4multiplicitypotential}
 \begin{aligned}
 S_E(h)&=\sum_{i=1}^3\sum_{j=1}^2
             (e^{\sigma_{i,j}}-\sigma_{i,j})+R(h_0),\\
 R(h_0)&=\sum_i d_i-\tfrac32D
        =\tfrac12\sum_{i<k}(d_i+d_k-D)\geq0.
 \end{aligned}
\end{equation}
Since $e^\sigma-\sigma\geq1$, this proves $S_E\geq6$.
At $H(\nu)$, each arrow is an isometric embedding with a non-expansive
left inverse given by one of the central coordinate projections. The
same holds on every exterior power, so its singular values are zero.
Also $p_i=\nu$ and $D=2\ln\nu_{\det}(\omega)$, giving $R=0$ and $S_E=6$.

To identify the equality locus, we use determinant rigidity: if
$r\leq s$ are norms on a finite-dimensional space and their determinant
norms agree, then $r=s$. For any nonzero vector $v$, the quotient norms
satisfy $r/Kv\leq s/Kv$. The restriction--quotient determinant identity
\cite[Corollary~2.10]{BE} therefore shows that $r(v)<s(v)$ would make
the determinant norm of $r$ strictly smaller than that of $s$.

If $S_E(h)=6$, every scalar term in
\fulleqref{eq:d4multiplicitypotential} equals $1$, and every pairwise
determinant gap vanishes. Thus
\[
 \sigma_{i,j}=0,\qquad d_1=d_2=d_3=\tfrac12D.
\]
The norm $p_1\oplus p_2$ dominates $h_0$ and has the same determinant,
so rigidity gives $h_0=p_1\oplus p_2$. Consequently
$p_3=\max\{p_1,p_2\}$ pointwise. Since $p_3$ dominates each of $p_1,p_2$
and all three determinants agree, rigidity gives
$p_1=p_2=p_3=\nu$. Finally,
$\sigma_{i,1}=\ln\|\mathrm{id}:(W,h_i)\to(W,\nu)\|=0$
gives $\nu\leq h_i$, while the full singular sum gives equality of
their determinants. Rigidity yields $h_i=\nu$.

If $\nu$ is split, so is $H(\nu)$. Conversely, if its central norm were
split, its restriction to the first image plane would be split by
\namedref{Lemma}{lem:adapted}, and that restriction is $\nu$.
\end{proof}

In particular, any nonsplit norm on $W$ gives a nonsplit global minimizer
on this representation of dimension vector $(4,2,2,2)$. The norm $p$ of
\namedref{Subsection}{subsec:nonsplitattainment} supplies an explicit
choice, but the description \fulleqref{eq:d4minimumlocus} applies to every
norm on the multiplicity space $W$.

\medskip\noindent\textbf{Two radius modifications.}
We now carry out the construction in the proof of
\namedref{Lemma}{lem:radiuslangton} from a split feasible auxiliary norm.
It takes exactly two radius modifications.
Let $q$ be the standard norm on $W$ in the basis $w_1,w_2$, and write
$X=W\oplus0$ and $Y=0\oplus W$ for the central summands.
At the zero singular values of the minimizers in
\namedref{Proposition}{prop:d4minimumlocus}, choose the strict supporting
hinge $g(\sigma)=2\sigma_+$, since $f_i'(0)=1\in(0,2)$ for every $i$.
The auxiliary quiver has three new vertices $u_i$, each carrying $W$,
and arrows
\[
 W_{u_i}\xrightarrow{\mathrm{id}}E_i,\qquad
 W_{u_i}\xrightarrow{\rho_i}E_0,\qquad i=1,2,3,
\]
where $\rho_i$ are the maps in
\fulleqref{eq:d4multiplicityarrows}. All six logarithmic radius bounds
are zero. The determinant weights and functional are
\[
\begin{aligned}
\kappa_{u_i}&=2,\qquad \kappa_i=-1,\qquad \kappa_0=-\tfrac32,\\
J(p)&=2\sum_{i=1}^3\ell_W(p_{u_i})
       -\sum_{i=1}^3\ell_W(p_i)-\tfrac32\ell_{E_0}(p_0).
\end{aligned}
\]
The determinant vectors on the copies of $W$ are $\omega$ and that on
$E_0$ is $\Omega$, as above. The total weight is zero.

Fix $x,\eta>0$. For $t\in\R$, put
\[
 Q_t(v,w)=\max\{e^tq(v),q(w)\}.
\]
Start with the split feasible norm $p^{(0)}$ given by
\[
\begin{aligned}
p^{(0)}_0&=Q_x,\\
(p^{(0)}_{u_1},p^{(0)}_{u_2},p^{(0)}_{u_3})&=(e^xq,q,e^xq),\\
(p^{(0)}_1,p^{(0)}_2,p^{(0)}_3)&=(e^xq,e^{-\eta}q,e^xq).
\end{aligned}
\]
Its reduction is the direct sum of three components: the identity
$u_2\to Y$, the component on $u_1,u_3,E_1,E_3,X$ in which both central
arrows have image in $X$, and the isolated vertex $E_2$. The arrows
$u_1\to E_1$ and $u_3\to E_3$ are identities; $u_2\to E_2$ vanishes.

The maximal slope of the first component is $1/4$. Indeed, if its
source and central ranks are $a$ and $b$, then $b\geq a$ and
\[
 \kappa-\tfrac14d=\tfrac74(a-b)\leq0.
\]
For a subobject of the second component, let $a_1,a_3$ be its auxiliary
source ranks, $b$ its central rank, and $l_1,l_3$ its original leaf
ranks. The arrows give $a_1,a_3\leq b$ and $l_i\geq a_i$, so
\[
 10\kappa-d
 =19(a_1+a_3)-16b-11(l_1+l_3)
 \leq8(a_1+a_3)-16b\leq0.
\]
Its maximal slope is therefore $1/10$, attained by the whole component.
The isolated leaf has slope $-1$. Intersecting a subobject with one
direct summand and projecting to the remaining summands shows that these
bounds also control every subobject of the direct sum. Consequently the
largest maximal-slope subobject $A_0$ consists of the full spaces at
$u_2$ and $Y$, with zero spaces elsewhere:
\[
 d(A_0)=4,\qquad \kappa(A_0)=1,\qquad \mu(A_0)=\tfrac14.
\]

For this splitting, the vertexwise Hom space in
\namedref{Lemma}{lem:radiuslangton} is
$C^0=\operatorname{Hom}_K(Y,X)$, which is nonzero. Identifying both
summands with $W$, its norm is $e^x\|\cdot\|_{q\to q}$.
The only summands of $C^1$ come from $u_2\to X$ and $u_2\to E_2$,
and
\[
 D(c)=(-c,0),\qquad \gamma=(0,\mathrm{id}_W).
\]
Projection onto the first summand with a minus sign gives a contractive
left inverse $P$ of $D$, and $P\gamma=0$. The block $\beta$ from
$u_3$ to $Y$ is nonzero, but the quadratic term $Q$ is identically zero:
$C^0$ is supported only at the central sink, so every $c_{s(e)}$ is
zero. Thus the fixed point is $c=0$, and the residual radius is
\[
 r_1=e^{-\eta}.
\]
The leading residual is the identity $u_2\to E_2$, in a summand on
which every coboundary vanishes. It therefore gives a nonsplit reversed
extension, as required by \fulleqref{eq:reversedradiusextension}.

The first modification multiplies all complementary norms by $e^\eta$.
Writing $y=x+\eta$, the resulting norm $p^{(1)}$ is
\[
\begin{aligned}
p^{(1)}_0&=Q_y,\\
(p^{(1)}_{u_1},p^{(1)}_{u_2},p^{(1)}_{u_3})&=(e^yq,q,e^yq),\\
(p^{(1)}_1,p^{(1)}_2,p^{(1)}_3)&=(e^yq,q,e^yq).
\end{aligned}
\]
All three auxiliary identity arrows are now nonzero in the reduction,
while the reduced images of $u_1$ and $u_3$ still coincide in $X$.
The reduction splits into the component on $u_1,u_3,E_1,E_3,X$ and
the component on $u_2,E_2,Y$. The former has maximal slope $1/10$ by
the preceding calculation. Every nonzero subobject of the latter has
negative weight: with ranks $a_2,b,l_2$, the inequalities
$b,l_2\geq a_2$ give
\[
 \kappa=2a_2-\tfrac32b-l_2
       =-\tfrac12a_2-\tfrac32(b-a_2)-(l_2-a_2)<0.
\]
Thus the largest maximal-slope subobject $A_1$ is the full first
component, and
\[
 d(A_1)=10,\qquad \kappa(A_1)=1,\qquad \mu(A_1)=\tfrac1{10}.
\]

Now $C^0=\operatorname{Hom}_K(X,Y)$, with norm
$e^{-y}\|\cdot\|_{q\to q}$. The two summands of $C^1$ correspond to
the central arrows from $u_1$ and $u_3$, and
\[
 D(c)=(-c,-c),\qquad \gamma=(0,\mathrm{id}_W).
\]
Again, negative projection onto the first summand is a contractive left
inverse with $P\gamma=0$, and $Q=0$. The fixed point is $c=0$ and
\[
 r_2=e^{-y}=e^{-(x+\eta)}.
\]
The leading residual $(0,\overline{\mathrm{id}}_W)$ does not lie in the
coboundary image $\{(-\bar c,-\bar c)\}$. This reversed extension is
therefore nonsplit as well. Multiplying the complementary norms on
$u_2,E_2,Y$ by $e^y$ gives $p^{(2)}$, for which every norm on a copy of
$W$ is $e^yq$ and the central norm is $e^y(q\oplus q)$.

The final reduction is semistable directly. For any graded
subrepresentation, write $a_i$ for the auxiliary source ranks, $l_i$
for the original leaf ranks, and $b$ for the central rank. The identity
arrows give $l_i\geq a_i$. Each pair of central image planes is
complementary, so $a_i+a_j\leq b$ for $i\ne j$. Summing these three
inequalities gives $\sum_i a_i\leq3b/2$, and hence
\[
 \kappa=2\sum_i a_i-\sum_i l_i-\tfrac32b
 \leq\sum_i a_i-\tfrac32b\leq0.
\]
The resulting original-vertex norm is the split minimizer
$H(e^yq)$ of \namedref{Proposition}{prop:d4minimumlocus}.
It has the same six zero singular values and the same potential value
$6$ as every nonsplit minimizer $H(\nu)$ constructed above.

Evaluating determinant norms at $\omega$ and $\Omega$ multiplies the
respective orthogonal basis weights. Since $q(w_j)=1$, this gives
\[
 \begin{aligned}
 \ell_W(e^tq)&=2t\qquad(t\in\R),\\
 \ell_{E_0}(Q_x)&=2x,\qquad
 \ell_{E_0}(Q_y)=2y,\qquad
 \ell_{E_0}\bigl(e^y(q\oplus q)\bigr)=4y.
 \end{aligned}
\]
Substituting these determinant logarithms into $J$ and evaluating $S_E$
on the original vertices gives the following comparison. The slope and
total rank are those of the maximal-slope subobject in the auxiliary
reduction, and each residual radius
is used in the next modification.
\begin{center}
\begingroup
\small
\setlength{\tabcolsep}{6pt}
\renewcommand{\arraystretch}{1.15}
\begin{tabular}{@{}ccccc@{}}
\toprule
Stage & \shortstack{Maximal slope\\and total rank}
      & \shortstack{Next residual\\radius} & $J$ & $S_E$ \\
\midrule
$p^{(0)}$ & $(\tfrac14,4)$ & $e^{-\eta}$
          & $x+2\eta$ & $4+2e^\eta+x-2\eta$ \\
$p^{(1)}$ & $(\tfrac1{10},10)$ & $e^{-y}$
          & $y$ & $6+y$ \\
$p^{(2)}$ & Semistable & --- & $0$ & $6$ \\
\bottomrule
\end{tabular}
\endgroup
\end{center}
At each stage the auxiliary source norms are the least feasible ones,
so \fulleqref{eq:auxiliaryelimination} identifies $J$ with the hinge
potential on the original vertices. After the first modification all
original arrow singular values are already zero, while the original
potential is still strictly greater than its minimum $6$.
The coincident reduced central images still contribute a determinant defect;
the second modification removes it. The table also shows that the first
modification can increase $S_E$ for small $\eta>0$.
The decreasing functionals in this construction are $J$ and the
supporting-hinge potential. Both modifications act on the shared central
space: the first activates the delayed leaf arrow and enlarges the
central gap, while the second separates the colliding reduced images.

\renewcommand{\refname}{Bibliography}

\end{document}